\tikzstyle arrowstyle=[scale=1.5]
\tikzstyle directed=[postaction={decorate,decoration={markings,
    mark=at position .5 with {\arrow[arrowstyle]{stealth}}}}]
\tikzstyle reverse directed=[postaction={decorate,decoration={markings,
    mark=at position .5 with {\arrowreversed[arrowstyle]{stealth};}}}]
\newcommand{\ddt}{\frac{\partial}{\partial t}}
\newcommand{\ddbar}{\sqrt{-1} \partial \overline{\partial}}
\newcommand{\Ric}{\mathrm{Ric}}
\newcommand{\ov}[1]{\overline{#1}}
\newcommand{\tr}[2]{\textrm{tr}_{#1}{#2}}
\newcommand{\ve}{\varepsilon}
\newcommand{\oke}{\omega_{\textrm{KE}}}
\newcommand{\Kod}{\textrm{Kod}}
\newcommand{\dtu}[2]{\frac{\partial \tilde{z}^{#1}}{\partial z^{#2}}}
\newcommand{\dut}[2]{\frac{\partial z^{#1}}{\partial \tilde{z}^{#2}}}
\newcommand{\dt}[1]{\frac{\partial}{\partial \tilde{z}^{#1}}}
\newcommand{\du}[1]{\frac{\partial}{\partial z^{#1}}}
\renewcommand{\le}{\leqslant}
\renewcommand{\ge}{\geqslant}
\theoremstyle{plain}
\newtheorem{thm}{Theorem}[chapter]
\newtheorem{lem}{Lemma}[chapter]
\newtheorem{prop}{Proposition}[chapter]
\newtheorem{cor}{Corollary}[chapter]
\theoremstyle{definition}
\newtheorem{example}{Example}[chapter]
\newtheorem{exer}{Exercise}[chapter]
\theoremstyle{remark}
\newtheorem{rem}{Remark}[chapter]
\numberwithin{section}{chapter}
\numberwithin{equation}{section}
\numberwithin{figure}{chapter}
\begin{document}


\title{The K\"ahler-Ricci flow on compact K\"ahler manifolds}
\author{Ben Weinkove \\ Department of Mathematics, Northwestern University \\2033 Sheridan Road, Evanston IL 60208}
\date{October 2013}

\maketitle

\chapter*{Preface}

These lecture notes are based on five hours of lectures given  at the Park City Math Institute in the summer of 2013.  The notes are intended to be a leisurely introduction to the K\"ahler-Ricci flow on compact K\"ahler manifolds.  They are
 aimed at graduate students who have some background in differential geometry, but do not necessarily have any knowledge of K\"ahler geometry or the Ricci flow.  
  There are exercises throughout the text.  The goal is that by the end, the reader will learn the basic techniques in the K\"ahler-Ricci flow and know enough to be able to explore the current literature.

The material covered by these notes is as follows.  In the first lecture, we give a quick introduction to some of the main definitions and tools of K\"ahler geometry. In Lecture 2, we introduce the K\"ahler-Ricci flow and give some simple examples, before stating, and in Lecture 3 proving, the maximal existence time theorem for the flow. In Lecture 4 we prove long time convergence results  in the cases when the manifold has negative or zero first Chern class.  Finally in Lecture 5 we discuss more recent work on the behavior of the flow on K\"ahler surfaces.  We also  ``go beyond'' the K\"ahler-Ricci flow by discussing a new flow on complex manifolds called the Chern-Ricci flow.

The K\"ahler-Ricci flow started as a small branch of the study of Hamilton's Ricci flow, but by now is itself a vast area of research. As a consequence, we have had to  omit many topics.  For the interested reader seeking more complete expository sources: the chapter \cite{SW4} by Jian Song and the author  contains many of the results of these notes  and much more; the works  \cite{BG, Cao2, Gu} (in the same volume as \cite{SW4}) and the more general survey \cite{PS2} are excellent sources of information.   

The author thanks Matt Gill who was the teaching assistant for this course, for his help in writing the exercises.  In addition, thanks go to  the organizers, Hubert Bray, Greg Galloway, Rafe Mazzeo and Natasa Sesum, of the research program of the 2013 PCMI Summer Session for giving the author the opportunity to participate in this exciting event.  Discussions with researchers and graduate students at the Park City Math Institute were invaluable in shaping the form of these notes.  The author also thanks Valentino Tosatti for some helpful comments on a  previous version of these notes.

The author was supported in part by NSF grant DMS-1332196. 

\chapter{An Introduction to K\"ahler geometry}

In this lecture we introduce the notion of a K\"ahler metric and describe the associated covariant derivatives and curvatures.  We take a somewhat informal approach which emphasizes the minimal definitions and tools needed to carry out computations.  The reader looking for more details may wish to consult \cite{KM} or \cite{GH}, for example.

\section{Complex manifolds}

Let $M$ be a smooth manifold of dimension $2n$.  We say that $M$ is a \emph{complex manifold of complex dimension $n$} if $M$ can be covered by charts $(U,z)$ where $U$ is an open subset of $M$ and $z: U \rightarrow \mathbb{C}^n$ is a homeomorphism onto an open subset $z(U)$ of $\mathbb{C}^n$, with the following property:  if $(\tilde{U}, \tilde{z})$ is another chart with $U \cap \tilde{U}$ nonempty then the \emph{transition maps}
$$\tilde{z} \circ z^{-1} : z(U \cap \tilde{U}) \rightarrow \tilde{z}(U \cap \tilde{U})$$
and
$$z \circ \tilde{z}^{-1} : \tilde{z}(U \cap \tilde{U}) \rightarrow z(U \cap \tilde{U}),$$
are holomorphic.

We write $z=(z^1, \ldots, z^n)$ and $\tilde{z}= (\tilde{z}^1, \ldots, \tilde{z}^n)$.  These are called \emph{complex coordinates}.   We also introduce the real coordinates $(x^1, \ldots, x^n, y^1, \ldots, y^n)$ defined by the usual formula
$$z^i = x^i + \sqrt{-1} y^i.$$
Note that we avoid the notation $i$ for $\sqrt{-1}$ since $i$ is our favorite letter for an index.
We  define operators $\frac{\partial}{\partial z^i}$ and $\frac{\partial}{\partial \ov{z}^i}$ on $\mathbb{C}^n$ by
$$\frac{\partial}{\partial z^i} = \frac{1}{2} \left( \frac{\partial}{\partial x^i} - \sqrt{-1} \frac{\partial}{\partial y^i} \right), \quad \frac{\partial}{\partial \ov{z}^i} =  \frac{1}{2} \left( \frac{\partial}{\partial x^i} + \sqrt{-1} \frac{\partial}{\partial y^i} \right).$$
As the notation suggests, we have for all $i,j=1, \ldots, n$,
$$\frac{\partial}{\partial z^i} (z^j) = \delta_{ij}, \quad \frac{\partial}{\partial z^i} (\ov{z}^j) =0, \quad \frac{\partial}{\partial \ov{z}^i} (z^j) = 0, \quad \frac{\partial}{\partial \ov{z}^i} (\ov{z}^j) =\delta_{ij},$$
where $\delta_{ij}$ is the Kronecker delta symbol.
In particular, a smooth function $f$ on $\mathbb{C}^n$ is holomorphic if and only if 
$$\frac{\partial f}{\partial \ov{z}^i} = 0 \quad \textrm{for } i=1, \ldots, n.$$
Hence the condition that the transition maps be holomorphic can be written as
\begin{equation} \label{transition}
\frac{\partial \tilde{z}^i}{\partial \ov{z}^j} =0 \quad \textrm{and} \quad \frac{\partial z^i}{\partial \ov{\tilde{z}}^j}=0, \quad \textrm{for all } i,j=1, \ldots, n,
\end{equation}
where defined.  Here we are writing $\tilde{z}$ for the map $\tilde{z} \circ z^{-1}$ and $z$ for the map $z \circ \tilde{z}^{-1}$.

Recall that the definition of a smooth manifold requires the existence of coordinate charts whose transition maps are smooth, and this allows for a well-defined notion of a smooth function.  Similarly, on a complex manifold $M$ we can make the following definition:  a smooth function $f$ on $M$ is \emph{holomorphic} if for each coordinate chart $(U, z)$ we have
$$\frac{\partial f}{\partial \ov{z}^i} =0, \quad \textrm{for } i=1, \ldots, n,$$
on $U$.  Of course, we are writing $f$ for $f \circ z^{-1}$.  To see that this is well-defined, suppose that $\tilde{U}$ is an overlapping coordinate chart and compute using the chain rule on $U \cap \tilde{U}$,
$$\frac{\partial f}{\partial \ov{\tilde{z}}^j} = \sum_k \frac{\partial f}{\partial \ov{z}^k} \frac{\partial \ov{z}^k}{\partial \ov{\tilde{z}}^j} + \sum_k \frac{\partial f}{\partial z^k} \frac{\partial z^k}{\partial \ov{\tilde{z}}^j} = 0,$$
as required.
Note that we have used the condition (\ref{transition}) to see that the second term vanishes.

\begin{example}
$\mathbb{C}^n$ is a complex manifold with a single coordinate chart $U=\mathbb{C}^n$ and $z: U \rightarrow \mathbb{C}^n$ the identity map.  Taking the quotient of $\mathbb{C}^n$ by the lattice $\mathbb{Z}^{2n}$, say, gives a compact complex manifold homeomorphic to the torus $T^{2n}$.
\end{example}

\begin{example} \label{examplePn}
Define \emph{complex projective space} $\mathbb{P}^n$ as follows.  As a topological space, $\mathbb{P}^n$ is the quotient space 
$$(\mathbb{C}^{n+1} - \{ 0 \}) / \sim.$$
where $\sim$ is the equivalence relation defined by
$$(Z_0, Z_1, \ldots, Z_n) \sim (\lambda Z_0, \lambda Z_1, \ldots, \lambda Z_n),$$
for $\lambda \in \mathbb{C}^*$.  In other words, it is the space of complex lines through the origin in $\mathbb{C}^{n+1}$.  Define open sets $$U_i = \{ [Z_0, \ldots, Z_n] \in \mathbb{P}^n \ | \ Z_i \neq 0 \}, \quad \textrm{for } i=0, 1, \ldots, n,$$
where we are writing $[Z_0, \ldots, Z_n]$ for the equivalence class of $(Z_0, \ldots, Z_n)$ (the $Z_i$ are called \emph{homogeneous coordinates}).  The $U_i$ cover $\mathbb{P}^n$.  On $U_0$ we define complex coordinates $z^1, \ldots, z^n$ by
$$z^1 = \frac{Z_1}{Z_0}, \ldots, z^n = \frac{Z_n}{Z_0},$$
and similarly for $U_1, \ldots, U_n$.  We leave it to the reader to check that the associated transition maps are holomorphic.
\end{example}

\begin{exer} \label{S2}
Show that $\mathbb{P}^1$ is diffeomorphic to the sphere $S^2$.
\end{exer}

\section{Vector fields, 1-forms, Hermitian metrics and tensors} \label{sectionvf}

Let $M$ be a complex manifold as above. The \emph{complexified tangent space} $(T_pM)^{\mathbb{C}}$ at a point $p$ is given by the span over $\mathbb{C}$ of
$$\frac{\partial}{\partial z^1}, \ldots, \frac{\partial}{\partial z^n}, \frac{\partial}{\partial \ov{z}^1}, \ldots, \frac{\partial}{\partial \ov{z}^n},$$
where we evaluate at the point $p$.
We write $(T_pM)^{\mathbb{C}} = T^{1,0}_pM \oplus T^{0,1}_pM$, where 
$T^{1,0}_pM$ is given by the span of the $\frac{\partial}{\partial z^i}$ and $T^{0,1}_pM$ by the $\frac{\partial}{\partial \ov{z}^i}$.  By the chain rule and the equations (\ref{transition}), this decomposition of $(T_pM)^{\mathbb{C}}$ is independent of choice of complex coordinate chart.
Indeed,
$$\frac{\partial}{\partial z^i} = \sum_k \frac{\partial \tilde{z}^k}{\partial z^i} \frac{\partial}{\partial \tilde{z}^k} +\sum_k \frac{\partial \ov{\tilde{z}}^k}{\partial z^i} \frac{\partial}{\partial \ov{\tilde{z}}^k} =  \sum_k \frac{\partial \tilde{z}^k}{\partial z^i} \frac{\partial}{\partial \tilde{z}^k} \in \textrm{span}\left\{ \frac{\partial}{\partial \tilde{z}^1}, \ldots, \frac{\partial}{\partial \tilde{z}^n} \right\},$$
and similarly for $\frac{\partial}{\partial \ov{z}^i}$.

We define a \emph{$T^{1,0}$ vector field} on $M$ to be a smooth complex-valued vector field $X$ on $M$ with the property that $X_p \in T^{1,0}_pM$ for all $p\in M$.  We write $X$ locally as 
$$X= \sum_i X^i \frac{\partial}{\partial z^i},$$
where the $X^i: U \rightarrow \mathbb{C}$ are smooth functions which satisfy the following \emph{transformation rule}.  If we write $\tilde{X}^i$ for the corresponding functions on $\tilde{U}$ then
$$X^i = \sum_j \tilde{X}^j  \frac{\partial z^i}{\partial \tilde{z}^j} \quad \textrm{on } U \cap \tilde{U}.$$
Any collection of 
 functions $X^i: U \rightarrow \mathbb{C}$ defined on each chart in a cover of $M$, which satisfy the above transformation rule, determine a globally defined vector field $X$. Indeed  one can check that  
$$X^i \frac{\partial}{\partial z^i} = \tilde{X}^i \frac{\partial}{\partial \tilde{z}^i} \quad \textrm{on } U \cap \tilde{U}.$$
Here and henceforth we are using the \emph{summation convention} that we sum over repeated indices from $1$ to $n$  when one index is upper and the other is lower (we regard the index $i$ in $\frac{\partial}{\partial z^i}$ as  a lower index).

\begin{exer} \label{exh}
We define a \emph{holomorphic vector field} on $M$ to be a $T^{1,0}$ vector field $X=X^i \frac{\partial}{\partial z^i}$ such that
$$\frac{\partial X^i}{\partial \ov{z}^j} =0 \quad \textrm{for all } i,j=1, \ldots, n.$$
Show that this condition is well-defined, independent of choice of coordinate chart.
\end{exer}

We also define $T^{0,1}$ vector fields in a similar way.  A $T^{0,1}$ vector field is written locally as $Y= Y^{\ov{j}} \frac{\partial}{\partial \ov{z}^j}$ where the $Y^{\ov{j}}$ transform according to the rule
$$Y^{\ov{j}} = \tilde{Y}^{\ov{\ell}} \ov{ \frac{\partial z^j}{\partial \tilde{z}^{\ell}}}\quad \textrm{on } U \cap \tilde{U},$$
where of course
$$\ov{ \frac{\partial z^j}{\partial \tilde{z}^{\ell}}} =   \frac{\partial \ov{z}^j}{\partial \ov{\tilde{z}}^{\ell}}.$$

We can do the same for the complexified \emph{cotangent} space $(T_p^*M)^{\mathbb{C}}$ which is spanned over $\mathbb{C}$ by the $1$-forms
$$dz^1, \ldots, dz^n, d\ov{z}^1, \ldots, d\ov{z}^n.$$
Here $dz^i = dx^i + \sqrt{-1} dy^i$ and  $d\ov{z}^i = dx^i - \sqrt{-1} dy^i$ are dual to $\frac{\partial}{\partial z^i}$ and $\frac{\partial}{\partial \ov{z}^i}$ respectively.  We have  a decomposition of the complexified cotangent space into $(1,0)$-forms and $(0,1)$-forms, spanned by the $dz^i$ and $d\ov{z}^i$ respectively.  A  $(1,0)$-form $a$ on $M$ is written locally as $a= a_i dz^i$, and a $(0,1)$ form $b$ as $b=b_{\ov{j}} d\ov{z}^j$, where the $a_i$ and $b_{\ov{j}}$ transform by
$$a_i = \tilde{a}_k \frac{\partial \tilde{z}^k}{\partial z^i}, \quad b_{\ov{j}} = \tilde{b}_{\ov{\ell}} \ov{\frac{\partial \tilde{z}^{\ell}}{\partial z^j}} \quad \textrm{on } U \cap \tilde{U}.$$

Finally, define
a \emph{Hermitian metric} $g$ on $M$ to be a Hermitian inner product on the $n$-dimensional complex vector space $T^{1,0}_pM$ for each $p$, which varies smoothly in $p$.   Locally $g$ is given by an $n\times n$ positive definite Hermitian matrix whose $(i,j)$th entry we denote by $g_{i\ov{j}}$, which transforms according to
\begin{equation} \label{gtrans}
g_{i\ov{j}} = \tilde{g}_{k\ov{\ell}} \frac{\partial \tilde{z}^k}{\partial z^i} \ov{\frac{\partial \tilde{z}^{\ell}}{\partial z^j}} \quad \textrm{on } U \cap \tilde{U}.
\end{equation}
Given $T^{1,0}$ vector fields $X=X^i \frac{\partial}{\partial z^i}$ and $Y=Y^i \frac{\partial}{\partial z^i}$ we define their pointwise inner product by
$$\langle X, Y \rangle_g = g_{i\ov{j}}X^i \ov{Y^j},$$
and we write
$$|X|_g = \sqrt{\langle X, X \rangle_g}$$
for the \emph{norm} of $X$ with respect to $g$.

\begin{exer} \label{exinp}
Show that $\langle X, Y\rangle_g$ is well-defined, independent of choice of complex coordinates.
\end{exer}

We can similarly use $g$ to define an inner product on $T^{0,1}$ vectors.

\begin{rem}
A Hermitian metric $g$ defines a Riemannian metric $g_R$, which we can define locally by
$$g_R \left( \frac{\partial}{\partial x^i} , \frac{\partial}{\partial x^j} \right) = 2 \textrm{Re} (g_{i\ov{j}}) = g_R \left( \frac{\partial}{\partial y^i}, \frac{\partial}{\partial y^j} \right), \quad g_R\left( \frac{\partial}{\partial x^i}, \frac{\partial}{\partial y^j} \right) = 2 \textrm{Im}(g_{i\ov{j}}).$$
However, we won't make use of this correspondence.
\end{rem}

Extending all of the above, we can define tensors on a complex manifold with any number of upper or lower indices, barred or unbarred.  For example, the set of locally defined functions  $S^{ik}_{\ov{j}} : U \rightarrow \mathbb{C}$ defines a tensor with two upper unbarred indices and one lower barred index, if it satisfies the transformation rule:
\begin{equation} \label{tensor}
S^{ik}_{\ov{j}} = \tilde{S}^{ab}_{\ov{c}} \frac{\partial z^i}{\partial \tilde{z}^a} \frac{\partial z^k}{\partial \tilde{z}^b} \ov{\frac{\partial \tilde{z}^c}{\partial z^j}} \quad \textrm{on } U \cap \tilde{U}.
\end{equation}
More formally, $S = S^{ik}_{\ov{j}} \frac{\partial}{\partial z^i} \otimes \frac{\partial}{\partial z^k} \otimes d\ov{z}^j$ defines a smooth section of $T^{1,0}M \otimes T^{1,0}M \otimes (T^{0,1})^*M$.  However, in these lecture notes we will stick with more informal language.

The reader will notice that the transformation formulae for any kind of tensor can easily be derived by following the simple rule:  match the indices according to barred/unbarred, upper/lower and $z$ or $\tilde{z}$.  For example, in (\ref{tensor}), the indices $i,k$ on the left hand side are unbarred upper indices with respect to $z$, and since they are ``free indices'', they match with corresponding free unbarred indices $i,k$ with respect to $z$ on the right.  On the other hand, the upper unbarred indices $a, b$ with respect to $\tilde{z}$ on the right are ``summed indices'' and so must match with \emph{lower} unbarred $a,b$ indices with respect to $\tilde{z}$.  

\begin{exer} \label{ex14}
Let $g=(g_{k\ov{\ell}})$ be a Hermitian metric on $M$.    Define $g^{i\ov{j}}$ to be the $(i,j)$th component of the inverse matrix of $(g_{k\ov{\ell}})$.  Show that $g^{i\ov{j}}$ defines a tensor on $M$, which we call $g^{-1}$.
\end{exer}

We define an pointwise inner product on $(1,0)$ forms using $g^{i\ov{j}}$, as follows.  If $a=a_i dz^i$ and $b = b_i dz^i$ then
$$\langle a, b \rangle_g = g^{i\ov{j}} a_i \ov{b_j},$$
and we define the norm of $a$ to be $|a|_g = \sqrt{ \langle a, a \rangle_g}.$  We can similarly define an inner product  for $(0,1)$ forms.

\section{K\"ahler metrics and covariant differentiation}

We say that a Hermitian metric $g=(g_{i\ov{j}})$ is \emph{K\"ahler} if
\begin{equation} \label{kahlercondition}
\partial_k g_{i\ov{j}} = \partial_i g_{k\ov{j}} \quad \textrm{for all } i,j,k=1, \ldots, n.
\end{equation}
Namely,  $\partial_k g_{i\ov{j}}$ is unchanged when we swap the two unbarred indices $k$ and $i$.  Here and henceforth, to simplify notation, we are writing
$$\partial_i = \frac{\partial}{\partial z^i} \quad \textrm{and} \quad \partial_{\ov{j}} = \frac{\partial}{\partial \ov{z}^j}.$$

\begin{exer} \label{ex15}
Show that the condition (\ref{kahlercondition}) is independent of choice of complex coordinates.
\end{exer}

\begin{example}
If $M$ is a complex manifold of complex dimension 1 (a Riemann surface) then every Hermitian metric is K\"ahler, since (\ref{kahlercondition}) is vacuous.
\end{example}

\begin{example}
$\mathbb{C}^n$ with the Euclidean metric $g_{i\ov{j}} =\delta_{ij}$ is  K\"ahler.  More generally, we may take $g_{i\ov{j}} = A_{ij}$ where $(A_{ij})$ is any fixed $n\times n$ positive definite Hermitian matrix.  Since $g_{i\ov{j}}$ is constant, it descends to the quotient $\mathbb{C}^n/\mathbb{Z}^{2n}$ to give a K\"ahler metric on the torus.
\end{example}

\begin{exer} \label{exfs}
Following on from Example \ref{examplePn}, we can define 
$$g_{i\ov{j}} = \partial_i \partial_{\ov{j}} \log (1+|z^1|^2+ \cdots + |z^n|^2), \quad \textrm{on } U_0,$$
and similarly for $U_1, \ldots, U_n$.  Show that $(g_{i\ov{j}})$ defines a K\"ahler metric on $\mathbb{P}^n$.  This metric is called the \emph{Fubini-Study metric}.
\end{exer}

Given a K\"ahler metric $g$, we define its \emph{K\"ahler form} to be
$$\omega = \sqrt{-1} g_{i\ov{j}}dz^i \wedge d\ov{z}^j.$$
Observe that since $(g_{i\ov{j}})$ is Hermitian, the form $\omega$ is real:
$$\ov{\omega} = - \sqrt{-1} \ov{g_{i\ov{j}}} d\ov{z}^i \wedge dz^j = \sqrt{-1} g_{j\ov{i}}dz^j \wedge d\ov{z}^i = \omega.$$
It is a form of type $(1,1)$ (in the span of the $dz^i \wedge d\ov{z}^j$).  

Note that if $g$ is just a Hermitian metric then one can still define an associated real $(1,1)$ form $\omega$, which is sometimes referred to as the \emph{fundamental 2-form} of $g$.  Abusing notation slightly, we will often refer to the form $\omega$ as a Hermitian metric (or as a K\"ahler metric, if $g$ is K\"ahler).

The following exercise shows that a Hermitian metric $g$ is K\"ahler if and only if $\omega$ is $d$-closed.
First we need some notation.  We define an operator $\partial$ which takes a $(p,q)$ form to a $(p+1, q)$ form as follows.
  Given a $(p,q)$ form $$a = a_{i_1\cdots i_p \ov{j_1} \cdots \ov{j_q}} dz^{i_1} \wedge \cdots \wedge dz^{i_p} \wedge d\ov{z}^{j_1} \wedge \cdots \wedge d\ov{z}^{j_{q}},$$ we define
  $$\partial a = \partial_k a_{i_1\cdots i_p \ov{j_1} \cdots \ov{j_q}} dz^k \wedge dz^{i_1} \wedge \cdots \wedge dz^{i_p} \wedge d\ov{z}^{j_1} \wedge \cdots \wedge d\ov{z}^{j_{q}}.$$
  An argument similar to the solution of Exercise \ref{ex15} shows that $\partial$ is well-defined independent of choice of coordinates.
The operator $\ov{\partial}$, taking $(p,q)$ forms to $(p,q+1)$ forms is defined similarly, and we set
$$d=\partial +\ov{\partial},$$
which is the same as the usual exterior derivative on manifolds.  

\begin{exer} \label{ex17} Let $g$ be a Hermitian metric.  Show that 
$$\textrm{$g$ is K\"ahler} \ \Longleftrightarrow \ d\omega=0 \ \Longleftrightarrow \ \partial \omega =0 \ \Longleftrightarrow \ \ov{\partial} \omega =0.$$
\end{exer}

The next result gives a way to construct new K\"ahler manifolds from old ones.

\begin{prop}
Let $(M, g)$ be a K\"ahler manifold, and $N\subset M$ a complex submanifold.  Then $g|_N$ is a K\"ahler metric on $N$.
\end{prop}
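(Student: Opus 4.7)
The plan is to use the characterization from Exercise \ref{ex17}: a Hermitian metric is K\"ahler if and only if its fundamental $(1,1)$-form is $d$-closed. If I can identify the fundamental form of $g|_N$ with the pullback of $\omega$ along the inclusion $\iota : N \hookrightarrow M$, then closedness follows from naturality of $d$ and the hypothesis that $g$ is K\"ahler.

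First I would check that $g|_N$ is a Hermitian metric on $N$. Since $N$ is a complex submanifold, for each $p \in N$ the differential of the inclusion gives a $\mathbb{C}$-linear embedding $T^{1,0}_p N \hookrightarrow T^{1,0}_p M$. The restriction of the positive-definite Hermitian form $g_p$ to this complex subspace remains positive-definite Hermitian, and depends smoothly on $p$. Let $\omega_N$ denote its fundamental $(1,1)$-form.

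Next I would verify that $\omega_N = \iota^* \omega$. This is a pointwise statement about $(1,1)$-forms: evaluating either side on tangent vectors in $T_p N$ yields the same expression $g_p(\cdot,\cdot)$ applied to the images under $d\iota_p$, using that the differential of a holomorphic map preserves the $(1,0)/(0,1)$ decomposition. To make this fully concrete, one invokes the holomorphic submanifold normal form, providing local holomorphic coordinates $(z^1, \ldots, z^n)$ on $M$ in which $N$ is cut out by $z^{m+1} = \cdots = z^n = 0$, where $m = \dim_{\mathbb{C}} N$. In such coordinates the components of $g|_N$ relative to $(z^1, \ldots, z^m)$ are simply $g_{i\bar{j}}|_N$ for $1 \le i, j \le m$, and $\omega_N$ visibly coincides with $\iota^*\omega$.

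The conclusion is then immediate: since $g$ is K\"ahler, Exercise \ref{ex17} gives $d\omega = 0$, hence $d\omega_N = d(\iota^*\omega) = \iota^*(d\omega) = 0$, and applying Exercise \ref{ex17} in the reverse direction shows $g|_N$ is K\"ahler. Nothing here is genuinely hard; the one place the \emph{complex} submanifold hypothesis is essential (as opposed to a merely smooth real submanifold) is in the existence of adapted holomorphic coordinates used to match $\omega_N$ with $\iota^*\omega$, and that is the step that warrants the most care.
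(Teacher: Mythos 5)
Your proposal is correct and follows essentially the same route as the paper: restrict $g$ using adapted holomorphic coordinates in which $N$ is cut out by the vanishing of the last coordinates, identify the fundamental form of $g|_N$ with $\iota^*\omega$, and conclude via $d\iota^*\omega = \iota^* d\omega = 0$ together with the equivalence between the K\"ahler condition and closedness of the fundamental form. The only difference is that you spell out the identification $\omega_N = \iota^*\omega$ in more detail than the paper does.
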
 
\begin{proof}
$N$ being a complex submanifold of dimension $k$ means the following: at any point $p$ of $N$ we can find complex coordinates for $M$ centered at $p$ so that near $p$, $N$ is given by $\{ z^{k+1}= \cdots = z^n =0 \}$ and  $z^1, \ldots, z^k$ give complex coordinates for $N$.  It follows immediately that the metric $g$ at $p$ defines an inner product on the complexified tangent space of $N$ (just restrict to the span of $\partial_1, \cdots, \partial_k$).

Let $\iota: N \rightarrow M$ be the inclusion map, so that $\omega|_{N} = \iota^* \omega$.  Then by the standard property of the exterior derivative, we have
$$d\iota^*\omega = \iota^* d\omega =0,$$
as required.  
\end{proof}

It follows that:

\begin{cor}
Every smooth projective variety admits a K\"ahler metric.
\end{cor}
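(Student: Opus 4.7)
The plan is to combine the previous proposition with the existence of a K\"ahler metric on complex projective space. A smooth projective variety is, by definition, a smooth complex submanifold of $\mathbb{P}^N$ for some $N$ (cut out by the vanishing of finitely many homogeneous polynomials, but smoothness guarantees it really is a complex submanifold in the sense used in the proof of the proposition).

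So the first step is simply to invoke Exercise \ref{exfs}, which produces the Fubini-Study K\"ahler metric on $\mathbb{P}^N$ for any $N$. The second step is to apply the proposition just proved: if $X \subset \mathbb{P}^N$ is a smooth projective variety, then the restriction of the Fubini-Study metric to $X$ is a K\"ahler metric on $X$.

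There is essentially no obstacle here beyond unwinding the definition of ``smooth projective variety.'' The only thing one should briefly check is that the definition of complex submanifold used in the proposition (locally straightenable, so that $X$ is cut out by $z^{k+1} = \cdots = z^N = 0$ in suitable complex coordinates of $\mathbb{P}^N$) is indeed satisfied by a smooth projective variety; this follows from the holomorphic implicit function theorem applied to the defining homogeneous equations on each affine chart $U_i$, using smoothness (i.e.\ the Jacobian of the defining equations has maximal rank along $X$). Once this is in place, the corollary follows immediately in one line.
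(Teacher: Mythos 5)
Your proof is correct and is essentially identical to the paper's: both restrict the Fubini--Study metric of Exercise \ref{exfs} to the submanifold via the preceding proposition. The paper simply takes ``complex submanifold of $\mathbb{P}^N$'' as the definition of a smooth projective variety, so it skips the implicit-function-theorem remark you include, but that remark is a harmless (and correct) addition.
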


Indeed, a smooth projective variety can be defined to be a complex submanifold of $\mathbb{P}^N$ for some $N$, which by Example \ref{exfs} admits a K\"ahler metric.

We now define the notion of \emph{covariant differentiation} on a K\"ahler manifold $(M,g)$.  Observe that, by the argument of Exercise \ref{exh}, if we are given a $T^{1,0}$ vector field $X=X^i \partial_i$, the object
$$\frac{\partial X^i}{\partial \ov{z}^{\ell}} ,$$
gives a well-defined tensor, meaning that it transforms according to the rule
$$\frac{\partial X^i}{\partial \ov{z}^\ell}  = \frac{\partial \tilde{X}^k}{\partial \ov{\tilde{z}}^{j}}  \ov{\frac{\partial \tilde{z}^j}{\partial z^{\ell}} } \frac{\partial z^i}{\partial \tilde{z}^k} \quad \textrm{on } U \cap \tilde{U}.$$
Indeed, this follows from the fact that the transition maps for the $X^i$ are holomorphic and so ``pass through'' the operator $\partial_{\ov{\ell}}$.  However, the reader can check that the object
$$\frac{\partial X^i}{\partial z^k} $$
does  \emph{not} give a well-defined tensor, and this leads us to define covariant differentiation.

We define the \emph{Christoffel symbols} of $g$ on the chart $(U,z)$ to be the functions
$\Gamma^i_{kp}: U \rightarrow \mathbb{C}$ defined by
\begin{equation} \label{christoffel}
\Gamma^i_{kp} = g^{i\ov{q}} \partial_k g_{p\ov{q}},
\end{equation}
where we recall that $g^{i\ov{q}}$ are the components of the inverse of $g$.  By the K\"ahler condition (\ref{kahlercondition}), 
$$\Gamma^i_{kp} = \Gamma^i_{pk}.$$
The Christoffel symbols do \emph{not} define a tensor.  However, given a $T^{1,0}$ vector field $X$, we define the \emph{covariant derivative} $\nabla_k X^i$ by
$$\nabla_k X^i = \partial_p X^i + \Gamma^i_{kp} X^p,$$
and this \emph{does} define a tensor.  By the above observation, $\partial_{\ov{\ell}} X^i$ is already a tensor and we define
$$\nabla_{\ov{\ell}} X^i = \partial_{\ov{\ell}}X^i.$$
Similarly,  for a $T^{0,1}$ vector field $Y=Y^{\ov{j}} \partial_{\ov{j}}$, a $(1,0)$ form $a=a_i dz^i$ and a $(0,1)$ form $b=b_{\ov{j}}d\ov{z}^j$, we define
\[
\begin{array} {lll}
& \nabla_k Y^{\ov{j}} = \partial_k Y^{\ov{j}} \quad & \nabla_{\ov{\ell}} Y^{\ov{j}} = \partial_{\ov{\ell}}Y^{\ov{j}} + \ov{\Gamma^j_{\ell q}} Y^{\ov{q}} \\
& \nabla_k a_i = \partial_k a_i - \Gamma_{ki}^p a_p \quad & \nabla_{\ov{\ell}} a_i = \partial_{\ov{\ell}} a_i \\
& \nabla_k b_{\ov{j}} = \partial_k b_{\ov{j}} \quad & \nabla_{\ov{\ell}} b_{\ov{j}} = \partial_{\ov{\ell}} b_{\ov{j}} - \ov{\Gamma^q_{\ell j}} b_{\ov{q}}.
\end{array}
\]

\begin{exer} \label{ex18}
Show that $\nabla_k X^i$, $\nabla_{\ov{\ell}}Y^{\ov{j}}$ etc. all define tensors.
\end{exer}

Moreover, we can extend covariant differentiation naturally to any kind of tensor, such as the tensor $S^{ab}_{\ov{c}}$ described above in Section \ref{sectionvf}:
\[
\begin{split}
\nabla_k S^{ab}_{\ov{c}} = {} & \partial_k S^{ab}_{\ov{c}} + \Gamma^a_{kp} S^{pb}_{\ov{c}} + \Gamma_{kp}^b S^{ap}_{\ov{c}}\\
\nabla_{\ov{\ell}} S^{ab}_{\ov{c}} = {} & \partial_{\ov{\ell}} S^{ab}_{\ov{c}} - \ov{\Gamma^{q}_{\ell c}} S^{ab}_{\ov{q}}.
\end{split}
\]
In particular, we have $\nabla_k g_{i\ov{j}} = 0$.  Indeed, this follows from the choice of the Christoffel symbols, since
$$\nabla_k g_{i\ov{j}} = \partial_k g_{i\ov{j}} - \Gamma^p_{ki} g_{p\ov{j}} = \partial_k g_{i\ov{j}} - g^{p\ov{q}} (\partial_k g_{i\ov{q}}) g_{p\ov{j}} =0,$$
where we have used the fact that $g^{p\ov{q}} g_{p\ov{j}} =\delta_{jq}$.

\begin{rem}
 $\nabla$ coincides with the Levi-Civita connection of the Riemannian metric $g_R$ associated with $g$, extended to the complexified tangent bundle.  
\end{rem}

\section{Curvature}

We now describe the curvature associated to a K\"ahler metric $g$.  Define the \emph{curvature tensor} $R_{i \ov{j} k}^{\ \ \ \, p}$ by
$$R_{i \ov{j} k}^{\ \ \ \, p} = - \partial_{\ov{j}} \Gamma^p_{ik},$$
for $\Gamma^p_{ik}$ the Christoffel symbols of $g$, defined by (\ref{christoffel}).  

\begin{exer} \label{ex119}
Show that $R_{i\ov{j}k}^{\ \ \ \, p}$ is a tensor.
\end{exer}

It will be convenient to define
$$R_{i\ov{j} k \ov{\ell}} = R_{i\ov{j} k}^{\ \ \ \, p} g_{p\ov{\ell}}.$$
That is, we lower the index $p$ into the last slot using the metric $g$.   We will also refer to this tensor as the curvature tensor. The tensor $R_{i\ov{j} k \ov{\ell}}$ has the following symmetries:

\begin{prop} \label{propsym}  The curvature tensor of a K\"ahler metric satisfies
$$R_{i\ov{j}k\ov{\ell}} = R_{k\ov{j} i \ov{\ell}} = R_{i \ov{\ell} k \ov{j}} = R_{k\ov{\ell} i \ov{j}},$$
and
$$\ov{R_{i\ov{j}k\ov{\ell}}} = R_{j \ov{i} \ell \ov{k}}.$$
\end{prop}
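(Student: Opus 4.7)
The plan is to reduce all four identities to a single explicit coordinate formula for $R_{i\ov{j}k\ov{\ell}}$ in terms of the metric components, and then read off the symmetries directly from the K\"ahler condition and the Hermiticity of $g$.

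First I would expand the definition. Differentiating $\Gamma^p_{ik} = g^{p\ov{q}} \partial_i g_{k\ov{q}}$ in $\partial_{\ov{j}}$ and using the standard identity $\partial_{\ov{j}} g^{p\ov{q}} = -g^{p\ov{r}} g^{s\ov{q}} (\partial_{\ov{j}} g_{s\ov{r}})$ (obtained by differentiating $g^{p\ov{q}} g_{p\ov{r}} = \delta_{qr}$), and then contracting with $g_{p\ov{\ell}}$, I expect to arrive at
\begin{equation*}
R_{i\ov{j}k\ov{\ell}} = -\partial_i \partial_{\ov{j}} g_{k\ov{\ell}} + g^{s\ov{q}} (\partial_i g_{k\ov{q}})(\partial_{\ov{j}} g_{s\ov{\ell}}).
\end{equation*}
This is the only real calculation in the whole proof, and is just bookkeeping.

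Next I would check the two basic swaps. Swapping $i \leftrightarrow k$ leaves the first term unchanged because $\partial_i g_{k\ov{\ell}} = \partial_k g_{i\ov{\ell}}$ is precisely the K\"ahler condition (\ref{kahlercondition}), and leaves the second term unchanged because the same identity gives $\partial_i g_{k\ov{q}} = \partial_k g_{i\ov{q}}$. For the swap $\ov{j} \leftrightarrow \ov{\ell}$, I would first note the conjugate K\"ahler identity $\partial_{\ov{j}} g_{k\ov{\ell}} = \partial_{\ov{\ell}} g_{k\ov{j}}$, obtained by conjugating (\ref{kahlercondition}) together with $\ov{g_{i\ov{j}}}=g_{j\ov{i}}$; applying it to both terms of the formula gives $R_{i\ov{j}k\ov{\ell}} = R_{i\ov{\ell}k\ov{j}}$. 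The third identity $R_{i\ov{j}k\ov{\ell}} = R_{k\ov{\ell}i\ov{j}}$ is then automatic by composing the first two.

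Finally, for the conjugation identity $\ov{R_{i\ov{j}k\ov{\ell}}} = R_{j\ov{i}\ell\ov{k}}$, I would take the complex conjugate of the boxed formula, using $\ov{g_{a\ov{b}}} = g_{b\ov{a}}$ (and similarly for $g^{a\ov{b}}$) and the fact that conjugation interchanges $\partial_i$ with $\partial_{\ov{i}}$. The first term becomes $-\partial_j \partial_{\ov{i}} g_{\ell \ov{k}}$, and the second, after relabelling the dummy indices $s \leftrightarrow q$, becomes $g^{s\ov{q}}(\partial_j g_{\ell \ov{q}})(\partial_{\ov{i}} g_{s\ov{k}})$; this is exactly the formula for $R_{j\ov{i}\ell\ov{k}}$. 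The main obstacle is just keeping the barred/unbarred index conventions straight when conjugating, which is routine but error-prone.
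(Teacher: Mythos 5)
Your proposal is correct and follows exactly the paper's route: derive the coordinate formula $R_{i\ov{j}k\ov{\ell}} = -\partial_i\partial_{\ov{j}}g_{k\ov{\ell}} + g^{p\ov{q}}\,\partial_i g_{k\ov{q}}\,\partial_{\ov{j}}g_{p\ov{\ell}}$ from the definition via the derivative-of-inverse identity (this is the paper's equation (\ref{Rf})), then read the symmetries off from the K\"ahler condition, its conjugate, and Hermiticity of $g$. The only difference is that you spell out the index-swapping and conjugation checks that the paper declares ``immediate,'' and those checks are all carried out correctly.
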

\begin{proof}
From the definitions,
\begin{equation} \label{Rf}
\begin{split}
R_{i\ov{j}k\ov{\ell}} = {} & - g_{p\ov{\ell}} \partial_{\ov{j}} (g^{p\ov{q}} \partial_i g_{k\ov{q}}) \\
={} & - g_{p\ov{\ell}} g^{p\ov{q}} \partial_{\ov{j}} \partial_i  g_{k\ov{q}} + g_{p\ov{\ell}} g^{p\ov{s}} g^{r\ov{q}} \partial_{\ov{j}}g_{r\ov{s}} \partial_i g_{k\ov{q}} \\
={} & - \partial_i \partial_{\ov{j}} g_{k\ov{\ell}} + g^{p\ov{q}} \partial_i g_{k\ov{q}} \partial_{\ov{j}} g_{p\ov{\ell}},
\end{split}
\end{equation}
where we have used the formula for the derivative of an inverse matrix $\delta (A^{-1}) = -A^{-1} (\delta A) A^{-1}.$
The proposition then follows immediately from this formula, and the K\"ahler condition (\ref{kahlercondition}).
\end{proof}

The curvature tensor measures the failure of covariant derivatives to commute.  More precisely:

\begin{prop} \label{propcom}
For a $T^{1,0}$ vector field $X=X^p \partial_p$, a $T^{0,1}$ vector field $Y=Y^{\ov{q}} \partial_{\ov{q}}$, a $(1,0)$ form $a=a_p dz^p$ and a $(0,1)$ form $b=b_{\ov{q}}d\ov{z}^q$, we have the following commutation formulae:
\[
\begin{array}{lll}
& [\nabla_i, \nabla_{\ov{j}}] X^p =  R_{i\ov{j}k}^{\ \ \ \, p} X^k,  \qquad & [\nabla_i, \nabla_{\ov{j}} ] Y^{\ov{q}} =  - R_{i\ov{j}\ \, \, \ov{\ell}}^{\ \ \, \ov{q}} Y^{\ov{\ell}} \\
 & [\nabla_i, \nabla_{\ov{j}} ] a_p = - R_{i\ov{j}p}^{\ \ \ \,  q} a_q, \qquad & [\nabla_i, \nabla_{\ov{j}}] b_{\ov{q}} = R_{i\ov{j}\ \, \ov{q}}^{\ \ \, \ov{\ell}} \, b_{\ov{\ell}}.
\end{array}
\]
Here, $[\nabla_i, \nabla_{\ov{j}}] = \nabla_i \nabla_{\ov{j}} - \nabla_{\ov{j}} \nabla_i$, and we are raising and lowering indices of the curvature tensor using $g$.
\end{prop}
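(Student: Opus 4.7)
My plan is to verify each of the four identities by direct computation from the definitions, using three simplifying features of the K\"ahler setting. First, the only nonvanishing Christoffel symbols are $\Gamma^p_{ik}$ (all indices unbarred) and their complex conjugates $\ov{\Gamma^q_{\ell j}}$, with no mixed types. As a consequence, $\nabla_i$ contributes no correction when acting on a barred index, and $\nabla_{\ov{j}}$ contributes no correction when acting on an unbarred index. Second, partial derivatives commute: $\partial_i \partial_{\ov{j}} = \partial_{\ov{j}} \partial_i$. Third, the defining identity $R_{i\ov{j}k}^{\ \ \ \, p} = -\partial_{\ov{j}}\Gamma^p_{ik}$ is exactly what remains after the expected cancellations.

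For the first identity I would expand
\[
\nabla_i \nabla_{\ov{j}} X^p = \partial_i \partial_{\ov{j}} X^p + \Gamma^p_{ik}\,\partial_{\ov{j}} X^k,
\]
where the $\Gamma$-term arises from $\nabla_i$ acting on the upper unbarred index of the tensor $\nabla_{\ov{j}}X^p = \partial_{\ov{j}}X^p$, and
\[
\nabla_{\ov{j}}\nabla_i X^p = \partial_{\ov{j}}\bigl(\partial_i X^p + \Gamma^p_{ik} X^k\bigr) = \partial_{\ov{j}}\partial_i X^p + (\partial_{\ov{j}}\Gamma^p_{ik})X^k + \Gamma^p_{ik}\,\partial_{\ov{j}} X^k.
\]
Subtracting, the mixed partials and the $\Gamma^p_{ik}\partial_{\ov{j}}X^k$ terms cancel and only $-(\partial_{\ov{j}}\Gamma^p_{ik})X^k = R_{i\ov{j}k}^{\ \ \ \, p}X^k$ survives. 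The identity for $a_p$ follows by the identical bookkeeping with $-\Gamma^p_{ki}a_p$ in place of $+\Gamma^p_{ik}X^k$, explaining the opposite sign in the conclusion.

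For $Y^{\ov{q}}$ and $b_{\ov{q}}$ the roles of $\nabla_i$ and $\nabla_{\ov{j}}$ are reversed in terms of which one contributes a Christoffel correction: here $\nabla_i Y^{\ov{q}} = \partial_i Y^{\ov{q}}$ while $\nabla_{\ov{j}}Y^{\ov{q}} = \partial_{\ov{j}}Y^{\ov{q}} + \ov{\Gamma^q_{jr}}Y^{\ov{r}}$, so the surviving term after cancellation is $(\partial_i \ov{\Gamma^q_{jr}})Y^{\ov{r}}$. The one extra piece of work is identifying this with the intended curvature expression: I would show
\[
\partial_i \ov{\Gamma^q_{jr}} = -R_{i\ov{j}\ \, \ov{r}}^{\ \ \, \ov{q}}
\]
by writing $\ov{\Gamma^q_{jr}} = g^{s\ov{q}}\partial_{\ov{j}}g_{s\ov{r}}$, differentiating with the inverse-matrix rule $\partial_i g^{s\ov{q}} = -g^{s\ov{a}}g^{b\ov{q}}\partial_i g_{b\ov{a}}$, and then comparing with $g^{k\ov{q}}R_{i\ov{j}k\ov{r}}$ using the explicit formula (\ref{Rf}). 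The analogous identity for $b_{\ov{q}}$ again differs only by the sign of the Christoffel correction on a lower index.

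The main obstacle is not conceptual but bookkeeping: keeping track of which of the four possible index type/derivative type combinations produce a Christoffel correction, and carefully handling the conjugation when the ``barred'' curvature components appear. Once the rule that $\nabla$ touches only indices of its own type is internalized, each of the four identities reduces to a short cancellation and a single application of the definition of $R_{i\ov{j}k}^{\ \ \ \, p}$ (or its raised/lowered variants via (\ref{Rf})).
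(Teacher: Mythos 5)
Your proof is correct, but it takes a different route from the one in the text. The paper's proof first observes that both sides of each identity are tensors, and therefore computes at a single point in a holomorphic normal coordinate system (Lemma \ref{lemnormal}), where $\Gamma^p_{ik}(x)=0$; the first-order Christoffel terms then vanish on the spot and only $-(\partial_{\ov{j}}\Gamma^p_{ik})X^k$ survives. You instead work in an arbitrary coordinate system and track all the Christoffel corrections, checking that the terms $\Gamma^p_{ik}\partial_{\ov{j}}X^k$ (and their analogues) cancel in the commutator because $\nabla_i$ only corrects unbarred indices and $\nabla_{\ov{j}}$ only barred ones. Both computations are sound; yours is self-contained in that it does not invoke the existence of normal coordinates, and it makes the cancellation mechanism explicit, at the cost of slightly more bookkeeping. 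The one place the normal-coordinate approach genuinely saves work is the identification $\partial_i \ov{\Gamma^q_{jr}} = -R_{i\ov{j}\ \, \ov{r}}^{\ \ \, \ov{q}}$ needed for the $Y^{\ov{q}}$ and $b_{\ov{q}}$ cases: at the center of normal coordinates one has $R_{i\ov{j}k\ov{\ell}}=-\partial_i\partial_{\ov{j}}g_{k\ov{\ell}}$ and the quadratic first-derivative terms in (\ref{Rf}) drop out, whereas in your version you must carry out the inverse-matrix differentiation and match the quadratic terms against (\ref{Rf}) by hand (together with the conjugation symmetry of Proposition \ref{propsym}). Your outline of that step is correct, so the argument goes through.
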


Before we prove this proposition, it is convenient to introduce the notion of a \emph{holomorphic normal coordinate system}.

\begin{lem} \label{lemnormal}
Let $(M, g)$ be a K\"ahler manifold.  For any fixed point $x \in M$, there exists a holomorphic coordinate chart $(U, z)$ centered at $x$ such that, at $x$,
$$g_{i\ov{j}} = \delta_{ij}, \quad \textrm{and} \quad \partial_k g_{i\ov{j}} =0,$$
for all $i,j,k=1, 2, \ldots, n.$
\end{lem}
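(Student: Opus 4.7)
The strategy is a two-step normalization: first linearly correct to make $g_{i\bar j}(x)=\delta_{ij}$, then make a quadratic holomorphic change of coordinates to kill the first derivatives of $g$ at $x$. Throughout, one checks that each substitution remains a biholomorphism near $x$ (its Jacobian at $x$ is nondegenerate), so that what we produce is indeed a holomorphic chart.

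Step 1 (linear normalization). Pick any holomorphic chart $(U,w)$ centered at $x$. Then $h_{i\bar j}(x)$ is a positive-definite Hermitian matrix, so by linear algebra there is a constant invertible complex matrix $A=(A^i_j)$ such that $A^k_i \, \overline{A^\ell_j}\, h_{k\bar\ell}(x)=\delta_{ij}$. Perform the linear change $w^i\mapsto (A^{-1})^i_j w^j$. By the transformation rule \eqref{gtrans}, in these new holomorphic coordinates the metric at $x$ becomes the identity. Relabel these coordinates as $w$ again, so henceforth $h_{i\bar j}(x)=\delta_{ij}$.

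Step 2 (quadratic correction). For constants $c^a_{bc}$ symmetric in $b,c$ to be chosen, define
$$z^a = w^a + \tfrac{1}{2}\, c^a_{bc}\, w^b w^c.$$
Since $\partial z^a/\partial w^b = \delta^a_b + c^a_{bc}w^c$, this is a holomorphic diffeomorphism near $x$ with Jacobian equal to the identity at $x$; in particular, $g_{i\bar j}(x)=\delta_{ij}$ is preserved. Writing the transformation law as $g_{a\bar c} = h_{b\bar d}\,(\partial w^b/\partial z^a)\,\overline{(\partial w^d/\partial z^c)}$ and using $\partial w^b/\partial z^a = \delta^b_a - c^b_{ae}w^e + O(|w|^2)$, a direct differentiation at the origin gives
$$\partial_e g_{a\bar c}(x) = \partial_e h_{a\bar c}(x) - c^c_{ae}.$$
Hence if we choose $c^c_{ae} := \partial_e h_{a\bar c}(x)$ we obtain $\partial_e g_{a\bar c}(x)=0$ for all $a,c,e$, as required.

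The one nontrivial point, which is also where the K\"ahler hypothesis enters decisively, is that the coefficients $c^c_{ae}$ must be symmetric in the two lower indices $a$ and $e$ (otherwise the quadratic change is not even well-defined as written, and more importantly we would be prescribing more conditions than we have freedom for). This symmetry is exactly the K\"ahler condition \eqref{kahlercondition}: $\partial_e h_{a\bar c}(x)=\partial_a h_{e\bar c}(x)$. Without it one could only kill the symmetric part of $\partial_e h_{a\bar c}$, which is why general Hermitian metrics do not admit such normal coordinates. Granted this symmetry, the definition of $c^c_{ae}$ is consistent, and combining Steps 1 and 2 yields the desired holomorphic chart at $x$.
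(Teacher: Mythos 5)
Your proof is correct and follows essentially the same route as the paper's: a linear change to make $g_{i\ov{j}}(x)=\delta_{ij}$, followed by a quadratic holomorphic substitution whose coefficients are the Christoffel symbols at $x$, with the K\"ahler condition (\ref{kahlercondition}) supplying the required symmetry in the lower indices. The only cosmetic difference is that you write the new coordinates in terms of the old ($z = w + \tfrac12 c\, w w$) while the paper writes the old in terms of the new and invokes the inverse function theorem; these agree to second order, which is all that matters here.
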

\begin{proof}
By an affine linear change in coordinates, we can find coordinates $\tilde{z}^1, \ldots, \tilde{z}^n$ centered at $x$ with $\tilde{g}_{i\ov{j}}=\delta_{ij}$ at that point.  To obtain the vanishing of the first derivatives of $g$, define a new holomorphic coordinate system $z^1, \ldots, z^n$ by
$$\tilde{z}^i = z^i - \frac{1}{2}  \tilde{\Gamma}_{jk}^i(0) z^j z^k.$$
Observe that the first derivative of $\tilde{z}=\tilde{z}(z)$ at $0$ is the identity, and hence by the inverse function theorem, we can solve for $z$ as a holomorphic function of $\tilde{z}$ in a neighborhood of zero.  We leave it as an exercise  to check that in the $z$ coordinate system, we have
$$\quad \partial_k g_{i\ov{j}} =0$$
at $x$ for all $i,j,k$.
\end{proof}

\begin{exer} \label{ex110}
Complete the proof of Lemma \ref{lemnormal}
\end{exer}

The coordinate system we constructed in Lemma \ref{lemnormal} is called a \emph{holomorphic normal coordinate system} for $g$. The lemma implies in particular that we can choose coordinates for which the Christoffel symbols vanish at a point (and this implies that $\Gamma$ cannot be a tensor, since if it were, it would have to vanish everywhere).
 Note that the K\"ahler condition is required for the existence of holomorphic normal coordinates.  Indeed from (\ref{kahlercondition}) it is immediate that the existence of these coordinates for $g$ implies that $g$ is K\"ahler.  

We now complete the proof of Proposition \ref{propcom}.

\begin{proof}[Proof of Proposition \ref{propcom}]
Since both sides are tensors, it is sufficient to prove the identities at a single point $x$, in a holomorphic normal coordinate system (if an equation of tensors holds in one coordinate system, it must hold in every coordinate system). Compute at $x$,
\[
\begin{split}
[\nabla_i, \nabla_{\ov{j}}] X^p = {} & \partial_i \nabla_{\ov{j}} X^p - \partial_{\ov{j}} ( \partial_i X^p + \Gamma^p_{ik} X^k) \\
={} & \partial_i \partial_{\ov{j}} X^p - \partial_{\ov{j}} \partial_i X^p - (\partial_{\ov{j}} \Gamma^p_{ik} )X^k \\
={} & R_{i\ov{j} k}^{ \ \ \  \, p} X^k, 
\end{split}
\]
giving the first formula.  The others are left as the next exercise.
\end{proof}

\begin{exer} \label{ex111}
Complete the proof of Proposition \ref{propcom}.
\end{exer}

\begin{exer} \label{extr}
Let $\omega = \sqrt{-1} g_{i\ov{j}} dz^i \wedge d\ov{z}^j$ be a K\"ahler metric.
\begin{enumerate}
\item Show that if $\beta = \sqrt{-1} \beta_{i\ov{j}} dz^i \wedge d\ov{z}^j$ is a real $(1,1)$-form and $\omega$ a K\"ahler form, then
$$n \omega^{n-1} \wedge \beta = g^{i\ov{j}} \beta_{i\ov{j}} \omega^n =: (\tr{\omega}{\beta}) \omega^n.$$
\item Let $f$ be a real-valued function.  Show that
$$n \omega^{n-1} \wedge \sqrt{-1} \partial f \wedge \ov{\partial} f = | \partial f|^2_g \omega^n.$$
\end{enumerate}
\emph{Hint:  pick coordinates at a point for which $g_{i\ov{j}} =\delta_{ij}$.}
\end{exer}

We end this section by discussing the 
 \emph{Ricci curvature} of a K\"ahler metric, which is defined to be the tensor $R_{i\ov{j}}$ given by 
$$R_{i\ov{j}} = g^{k\ov{\ell}} R_{i\ov{j} k\ov{\ell}}.$$

A key property of K\"ahler metrics is the following simple formula for the Ricci curvature:

\begin{prop} \label{propRic} The Ricci curvature is given by
$$R_{i\ov{j}} = -\partial_i \partial_{\ov{j}} \log \det g.$$
\end{prop}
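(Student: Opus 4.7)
The plan is to reduce the statement to the classical formula for the logarithmic derivative of the determinant, applied to the Hermitian matrix $g_{k\ov{q}}$, via a short contraction of the curvature tensor.

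First I would compute $R_{i\ov{j}}$ directly from the definition. By Exercise \ref{ex14} the inverse metric $g^{k\ov{\ell}}$ satisfies $g^{k\ov{\ell}} g_{p\ov{\ell}} = \delta^k_p$, so contracting the formula $R_{i\ov{j}k\ov{\ell}} = R_{i\ov{j}k}^{\ \ \ \, p} g_{p\ov{\ell}}$ yields
\[
R_{i\ov{j}} = g^{k\ov{\ell}} R_{i\ov{j}k\ov{\ell}} = R_{i\ov{j}k}^{\ \ \ \, k} = -\partial_{\ov{j}} \Gamma^{k}_{ik}.
\]
Then I would substitute the definition (\ref{christoffel}) of the Christoffel symbols to obtain
\[
\Gamma^{k}_{ik} = g^{k\ov{q}} \partial_i g_{k\ov{q}}.
\]

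The key observation is that the right-hand side is exactly $\partial_i \log \det g$. This is Jacobi's formula: for any smooth family of invertible matrices $A(t)$ one has $\frac{d}{dt}\log \det A = \operatorname{tr}(A^{-1} \dot A)$, which applied at each point to the Hermitian positive definite matrix $(g_{k\ov{q}})$ with the direction $\partial_i$ gives
\[
\partial_i \log \det g = g^{k\ov{q}} \partial_i g_{k\ov{q}} = \Gamma^{k}_{ik}.
\]
Combining, $R_{i\ov{j}} = -\partial_{\ov{j}} \partial_i \log \det g = -\partial_i \partial_{\ov{j}} \log \det g$, as required.

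There is no serious obstacle: the whole argument is a one-line contraction plus Jacobi's formula. The only conceptual point worth emphasizing is that $\log \det g$ is not a globally defined function on $M$ (under a change of coordinates (\ref{gtrans}) it picks up a term $\log |\det(\partial \tilde{z}/\partial z)|^2$, which is pluriharmonic), but the mixed derivative $\partial_i \partial_{\ov{j}} \log \det g$ kills this ambiguity, which is consistent with $R_{i\ov{j}}$ being a genuine tensor.
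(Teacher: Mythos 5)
Your proof is correct and takes essentially the same route as the paper's: contract the curvature tensor to reduce $R_{i\ov{j}}$ to $-\partial_{\ov{j}}\Gamma^{p}_{ip}$ and then identify $\Gamma^{p}_{ip}=g^{p\ov{q}}\partial_i g_{p\ov{q}}$ with $\partial_i\log\det g$ via the logarithmic derivative of the determinant (equation (\ref{logdet}) in the text). Your closing remark that $\log\det g$ is only locally defined, with the ambiguity killed by the mixed derivative, is a correct and worthwhile observation, though not needed for the computation itself.
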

\begin{proof}
The proposition follows easily from the well-known formula for the derivative of the determinant of an invertible Hermitian matrix $A$:
$$\delta \det A = \textrm{trace}(A^{-1}\delta A) \det A,$$
which can be rewritten as
\begin{equation}\label{logdet}
\delta \log \det A = \textrm{trace}(A^{-1}\delta A).
\end{equation}
We compute
$$R_{i\ov{j}} = - g^{k\ov{\ell}} g_{p\ov{\ell}} \partial_{\ov{j}} \Gamma^p_{ik} = - \partial_{\ov{j}} \Gamma^p_{ip} = - \partial_{\ov{j}} (g^{p\ov{q}} \partial_i g_{p\ov{q}}) = - \partial_{\ov{j}} \partial_i \log \det g,$$
as required.
\end{proof}

We define the \emph{Ricci form} of $g$ to be the $(1,1)$ form
$$\textrm{Ric}(\omega) = \sqrt{-1} R_{i\ov{j}}dz^i \wedge d\ov{z}^j = - \ddbar \log \det g.$$
It follows from either Proposition \ref{propsym} or Proposition \ref{propRic} that $(R_{i\ov{j}})$ is Hermitian, and hence $\textrm{Ric}(\omega)$ is a real $(1,1)$ form.  Proposition \ref{propRic}
implies that  $\partial_k R_{i\ov{j}} = \partial_i R_{k\ov{j}}$, namely that $\textrm{Ric}(\omega)$ is $d$-closed.

Note that we often write
$$\textrm{Ric}(\omega) = - \ddbar \log \omega^n.$$
We make sense of this expression as follows.  If $\Omega$ is any volume form, locally written as
$$\Omega = a(z) (\sqrt{-1})^n dz^1 \wedge d\ov{z}^1 \wedge \cdots \wedge dz^n \wedge d\ov{z}^n,$$
then we define
$$\ddbar \log \Omega = \ddbar \log a.$$

\begin{exer} \label{exOmega}
This definition of $\ddbar \log \Omega$ is well-defined, independent of choice of local coordinates.
\end{exer}

Then since 
$$\omega^n = n! (\sqrt{-1})^n \det g \, dz^1 \wedge d\ov{z}^1 \wedge \cdots \wedge dz^n \wedge d\ov{z}^n,$$
we see that $- \ddbar \log \omega^n = -\ddbar \log \det g$.

\begin{exer} \label{exfs2}
Let $\omega_{\textrm{FS}}$ be the Fubini-Study metric of Exercise \ref{exfs}.  Show that
$$\textrm{Ric}(\omega_{\textrm{FS}}) = (n+1) \omega_{\textrm{FS}}.$$
\end{exer}

\chapter{The K\"ahler-Ricci flow and the K\"ahler cone}

In this lecture we introduce the K\"ahler-Ricci flow.  We also discuss K\"ahler classes,  the K\"ahler cone and the first Chern class.  We describe the maximal existence time result for the K\"ahler-Ricci flow  and give some simple examples.

\section{The K\"ahler-Ricci flow and simple examples}

Let $(M, \omega_0)$ be a compact K\"ahler manifold.  If $\omega=\omega(t)$ is a smooth family of K\"ahler metrics on $M$ satisfying the equation
\begin{equation} \label{krf}
\ddt{} \omega = - \textrm{Ric}(\omega), \quad \omega|_{t=0} = \omega_0,
\end{equation}
then we say that $\omega(t)$ is a solution of the \emph{K\"ahler-Ricci flow} starting at $\omega_0$.  For the reader who is familiar with Hamilton's Ricci flow of Riemannian metrics:  this is the same equation (modulo a factor of $2$) starting at a K\"ahler metric.

We describe now some simple examples of solutions to the K\"ahler-Ricci flow.  First, let $M$ be a compact Riemann surface (complex dimension 1).  The following theorem is known as the \emph{Uniformization Theorem}.

\begin{thm} \label{uniformization}
On any compact Riemann surface $M$ there exists a K\"ahler metric $\omega_{\emph{KE}}$ with
\begin{equation} \label{KE}
\emph{Ric}(\omega_{\emph{KE}}) = \mu \, \omega_{\emph{KE}},
\end{equation}
for some constant $\mu$.
\end{thm}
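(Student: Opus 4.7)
The plan is to reduce (\ref{KE}) to a scalar PDE whose sign is dictated by topology, and handle each topological case separately. Since $M$ has complex dimension one, every Hermitian metric is K\"ahler; locally $\omega = \mn g\, dz \wedge d\ov{z}$ and $\Ric(\omega) = -\ddbar \log g$ by Proposition \ref{propRic}. The Chern--Gauss--Bonnet theorem gives $\int_M \Ric(\omega) = 2\pi \chi(M)$, so if a solution of (\ref{KE}) exists the sign of $\mu$ must agree with the sign of $\chi(M)$, and the proof divides into three cases by genus.

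For genus zero, $M$ is biholomorphic to $\mathbb{P}^1$, and by Exercise \ref{exfs2} the Fubini--Study metric satisfies $\Ric(\omega_{\textrm{FS}}) = 2\omega_{\textrm{FS}}$; take $\oke = \omega_{\textrm{FS}}$ with $\mu = 2$. For genus one, $M$ is biholomorphic to a torus $\mathbb{C}/\Lambda$, and the flat Euclidean metric descends to a Ricci-flat K\"ahler metric; take this as $\oke$ with $\mu = 0$.

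For genus $\ge 2$ the argument is genuinely analytic. Pick any K\"ahler metric $\omega_0$ and rescale so that the cohomology class $[\omega_0]$ equals $-2\pi c_1(M)$; then $\Ric(\omega_0)+\omega_0$ is $d$-exact, and the $\ddbar$-lemma (trivial on a Riemann surface) yields a smooth $h$ with $\Ric(\omega_0)+\omega_0 = \ddbar h$. Writing $\oke = \omega_0 + \ddbar\vp$, the equation $\Ric(\oke) = -\oke$ becomes, after taking $\ddbar\log$ of both sides and using Proposition \ref{propRic}, the scalar complex Monge--Amp\`ere equation
\[
\log \frac{\omega_0 + \ddbar\vp}{\omega_0} = \vp + h, \qquad \omega_0 + \ddbar\vp > 0.
\]
I would solve this by the continuity method, replacing $h$ by $sh$ for $s\in[0,1]$: at $s=0$ the solution is $\vp\equiv 0$, openness follows from invertibility of the linearisation $\Delta_{\omega}-1$ (which has no kernel on a compact manifold since $\Delta_\omega \le 0$), and closedness reduces to uniform a priori estimates.

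The main obstacle, and the only place analysis is really needed, is the $C^0$ estimate for $\vp$. Here the negative sign on the right-hand side is decisive: at a maximum point of $\vp$ the complex Hessian $\vp_{z\ov{z}}$ is non-positive, so $\omega_0 + \ddbar\vp \le \omega_0$ there, whence $\log((\omega_0+\ddbar\vp)/\omega_0) \le 0$ and so $\sup\vp \le \sup|h|$; the analogous minimum-principle argument at a minimum point produces a lower bound. Once $\|\vp\|_{C^0}$ is controlled the equation is semilinear and uniformly elliptic on a compact Riemann surface, and standard Schauder theory promotes this to $C^{2,\alpha}$ and higher regularity, closing the continuity argument. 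Assembling the three cases completes the proof.
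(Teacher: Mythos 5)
Your proof is correct, but note first that the paper never actually proves Theorem \ref{uniformization}: it is quoted as a known result, and the only existence proofs the notes supply are parabolic ones --- the K\"ahler--Ricci flow arguments of Lecture 4 (Theorems \ref{thmcao} and \ref{thmcao2}) cover the genus $\ge 2$ and genus $1$ cases, while the genus $0$ case is delegated to the explicit Fubini--Study metric and the cited theorem of Hamilton and Chow. Your route is the classical elliptic one, the Aubin--Yau continuity method in the negative case, and it is genuinely different and in fact more self-contained here: in complex dimension one the equation $\log\bigl((\omega_0+\ddbar\vp)/\omega_0\bigr)=\vp+sh$ is literally the semilinear equation $\Delta_{\omega_0}\vp=e^{\vp+sh}-1$, the positivity $\omega_0+\ddbar\vp=e^{\vp+sh}\omega_0>0$ is automatic, your maximum-principle bound $\sup_M|\vp|\le \sup_M|h|$ is the entire a priori estimate, and $\Delta_\omega-1$ is invertible by self-adjointness and nonpositivity of $\Delta_\omega$, so the continuity argument closes exactly as you say. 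What the flow approach buys is a canonical deformation from an arbitrary initial metric and consistency with the higher-dimensional theory of the notes; what yours buys is brevity and no parabolic machinery. Two small points to tidy. First, with the paper's normalization $c_1(M)=[\Ric(\omega)]$ (the factor of $2\pi$ is deliberately omitted in Lecture 2), you want $[\omega_0]=-c_1(M)$ rather than $-2\pi c_1(M)$. Second, your genus $0$ and genus $1$ cases import the classification facts $M\cong\mathbb{P}^1$ and $M\cong\mathbb{C}/\Lambda$; these follow from Riemann--Roch and Abel--Jacobi respectively, so there is no circularity, but the torus case can be done without any classification by solving $\Ric(\omega_0+\ddbar\vp)=0$ directly, which in dimension one is the \emph{linear} equation $\Delta_{\omega_0}\vp=e^{h+c}-1$ with $c$ fixed by $\int_M (e^{h+c}-1)\,\omega_0=0$.
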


In general, a K\"ahler metric satisfying (\ref{KE}) is called a \emph{K\"ahler-Einstein} metric, which explains the notation.  By multiplying $\omega_{\emph{KE}}$ by a constant, we may assume that $\mu$ is equal to either  $1$, $0$ or $-1$.  Indeed this follows from the fact that
for any K\"ahler metric $\omega$ and positive real number $\lambda$,
\begin{equation} \label{scaleRic}
\textrm{Ric}(\lambda \omega) = \textrm{Ric}(\omega),
\end{equation} 
as can be seen immediately from the formula of Proposition \ref{propRic}.  

The Gauss-Bonnet formula on a Riemann surface $M$ can be written as
$$\int_M \textrm{Ric}(\omega) = 2\pi (2-2g_M),$$
where $g_M$ is the genus of $M$, and hence the sign of $\mu$ determines whether $M$ has genus 0, 1 or greater than 1.

\begin{example} \label{P1krf}
If $\mu=1$ then $M = \mathbb{P}^1$.  Let $\omega_0 = 2 \omega_{\textrm{FS}}$ where $\omega_{\textrm{FS}}$ is the Fubini-Study metric from Exercise \ref{exfs}.  Then by Exercise \ref{exfs2},
$$\textrm{Ric}(\omega_0) = \omega_0.$$
We claim that $\omega(t) = (1-t) \omega_0$ is a solution of the K\"ahler-Ricci flow on $[0,1)$.  Indeed, 
$$\ddt{} \omega(t) = - \omega_0 = - \textrm{Ric}(\omega_0) = - \textrm{Ric}(\omega(t)),$$
where the last equality makes use of (\ref{scaleRic}).
Hence  there is a solution of the K\"ahler-Ricci flow which shrinks the Fubini-Study metric to zero in finite time, by scaling.  
Recall that $\mathbb{P}^1$ is diffeomorphic to $S^2$  (Exercise \ref{S2}).  In fact, the Fubini-Study metric is a constant multiple of the standard round metric on $S^2$, and so this solution of the K\"ahler-Ricci flow can be visualized as a shrinking round sphere (Figure \ref{fp1}).
\end{example}

\begin{figure}[h!]
\begin{center}
\begin{tikzpicture}
\shade [ball color=black!10] (0,0) circle [radius=1cm];
\draw[->, thin] (1.3,0) -- (2.0,0);
\shade [ball color=black!10] (3,0) circle [radius=.65cm];
\draw[->, thin] (4.0,0) -- (4.8,0);
\shade [ball color=black!10] (5.5,0) circle [radius=.3cm];
\draw[->, thin] (6.2,0) -- (6.7,0);
\draw[fill] (7,0) circle (.3pt);
\end{tikzpicture}  
\end{center}
\caption{$\mathbb{P}^1$ shrinking along the K\"ahler-Ricci flow} \label{fp1}
\end{figure}
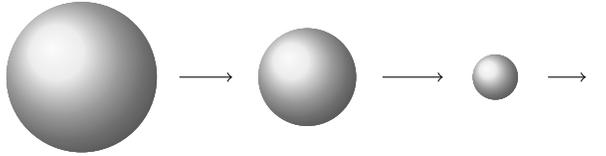

\begin{example} \label{T2krf}
If $\mu=0$ then $M$ is a torus.  We have a stationary solution of the K\"ahler-Ricci flow starting at $\omega_0 = \omega_{\textrm{KE}}$,
$$\omega(t) = \omega_0,$$
for $t \ge 0$.
\end{example}

\begin{example} \label{gg1krf}
If $\mu=-1$ then $M$ is  a surface of genus strictly greater than one.  If  $\omega_{\textrm{KE}}$ satisfies $\textrm{Ric}(\omega_{\textrm{KE}}) = - \omega_{\textrm{KE}},$ then
$$\omega(t) = (1+t)\omega_{\textrm{KE}}$$
solves the K\"ahler-Ricci flow for $t \ge 0$ starting at $\omega_0 = \omega_{\textrm{KE}}$.  The solution of the K\"ahler-Ricci flow exists for all time and expands by scaling. 
\end{example}

\section{The K\"ahler cone and the first Chern class}

A K\"ahler metric $\omega$ is a closed real $(1,1)$ form, and hence defines an element of the cohomology group
$$H^{1,1}_{\ov{\partial}}(M, \mathbb{R}) = \frac{ \{ \textrm{$\ov{\partial}$-closed real $(1,1)$ forms} \}}{\textrm{Im} \, \ov{\partial}}.$$
By Hodge theory, $H^{1,1}_{\ov{\partial}}(M, \mathbb{R})$ is a finite dimensional vector space over $\mathbb{R}$.

The \emph{$\partial \ov{\partial}$ Lemma}, which holds on K\"ahler manifolds, and which we will not state in its full generality, implies that
$$H^{1,1}_{\ov{\partial}}(M, \mathbb{R})  = \frac{ \{ \textrm{$\ov{\partial}$-closed real $(1,1)$ forms} \}}{\textrm{Im} \, \partial \ov{\partial}}.$$
Namely, if $\beta$ and $\gamma$ are two closed real $(1,1)$ forms with $\beta = \gamma + \ov{\partial} \eta$ for some $(1,0)$ form $\eta$ then
$\beta = \gamma + \ddbar f$ for a real-valued function $f$.

In particular, if $\omega$ and $\omega'$ are two K\"ahler metrics with $[\omega]=[\omega']$ (i.e. they define the same element in $H^{1,1}_{\ov{\partial}}(M, \mathbb{R})$) then
\begin{equation} \label{oop}
\omega' = \omega+ \ddbar \varphi,
\end{equation}
for some smooth real-valued function $\varphi$.  Moreover, the function $\varphi$ is unique up to a constant, since if $\tilde{\varphi}$ is another function satisfying (\ref{oop}) then $\ddbar (\varphi - \tilde{\varphi})=0$ and by the next exercise, $\varphi - \tilde{\varphi}$ is a constant.

\begin{exer}  \label{exerddbar} Let $(M, \omega)$ be a compact K\"ahler manifold.
Show that if a smooth function $f: M \rightarrow \mathbb{R}$ satisfies $\ddbar f \ge 0$ then $f$ is a constant on $M$.  \emph{Hint:  integrate $f \ddbar f \wedge \omega^{n-1}$ over $M$ and use Stokes' Theorem.}
\end{exer}

We say that a class $\alpha$ in $H^{1,1}_{\ov{\partial}}(M, \mathbb{R})$ is a \emph{K\"ahler class} if there exists a K\"ahler metric $\omega$ with $[\omega]= \alpha$, and in this case we write $\alpha>0$.  If $-\alpha$ is a K\"ahler class then we write $\alpha<0$.  
\begin{exer} \label{ex22}
For $\alpha$ in $H^{1,1}_{\ov{\partial}}(M, \mathbb{R})$, show that the conditions $\alpha>0$, $\alpha=0$ and $\alpha<0$ are mutually exclusive.  Here $\alpha=0$ simply means that $\alpha$ is the zero element of $H^{1,1}_{\ov{\partial}}(M, \mathbb{R})$.
\end{exer}

Note that a class $\alpha$ in $H^{1,1}_{\ov{\partial}}(M, \mathbb{R})$ need not satisfy one of $\alpha>0$, $\alpha=0$ or $\alpha<0$, as we shall see in examples later.

We define the \emph{K\"ahler cone} of $M$ to be
$$\textrm{Ka}(M) = \{ \alpha \in H^{1,1}_{\ov{\partial}}(M, \mathbb{R}) \ | \ \alpha>0 \}.$$

\begin{exer} \label{ex23}
Show that $\textrm{Ka}(M)$ is an open convex cone in $H^{1,1}_{\ov{\partial}}(M, \mathbb{R})$.  (Recall that being a \emph{convex cone} means that $\alpha, \alpha' \in \textrm{Ka}(M)$ and $s, s' \in \mathbb{R}^{>0}$ implies that $s\alpha + s'\alpha'  \in \textrm{Ka}(M)$.)
\end{exer}

We now describe the \emph{first Chern class} of a K\"ahler manifold $M$.  This is a special element of $H^{1,1}_{\ov{\partial}}(M, \mathbb{R})$ defined by
$$c_1(M) = [\textrm{Ric}(\omega)],$$
where $\omega$ is any K\"ahler metric on $M$.  Note that, in comparison to the usual definition in the literature,  we have  omitted a factor of $2\pi$.

It appears from the definition that $c_1(M)$ depends on the choice of metric $\omega$, but in fact it does not:

\begin{prop}
$c_1(M)$ is independent of choice of $\omega$.
\end{prop}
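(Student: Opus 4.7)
The plan is to show that for any two K\"ahler metrics $\omega, \omega'$ on $M$, the difference $\mathrm{Ric}(\omega) - \mathrm{Ric}(\omega')$ is $\bar\partial$-exact, so that they define the same cohomology class in $H^{1,1}_{\bar\partial}(M,\mathbb{R})$. The natural vehicle is Proposition \ref{propRic}, which gives $\mathrm{Ric}(\omega) = -\ddbar \log\det g$ and $\mathrm{Ric}(\omega') = -\ddbar \log\det g'$ in any chart, so formally $\mathrm{Ric}(\omega) - \mathrm{Ric}(\omega') = -\ddbar \log(\det g/\det g')$.

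The key step, and the only nontrivial point, is to justify that $\log(\det g/\det g')$ is a globally defined smooth function on $M$. Individually, neither $\log\det g$ nor $\log\det g'$ descends to a function on $M$: under a change of coordinate $z \mapsto \tilde z$, the transformation rule (\ref{gtrans}) gives
\[
\det g = \det \tilde g \cdot \left| \det\!\left(\frac{\partial \tilde z^k}{\partial z^i}\right) \right|^2,
\]
and the same formula holds for $g'$ with the same Jacobian factor. Taking the ratio, the Jacobian contributions cancel, so $\det g/\det g' = \det \tilde g/\det \tilde g'$ on $U \cap \tilde U$. Equivalently (and perhaps more transparently) the $(n,n)$-form $\omega^n/(\omega')^n$ is a ratio of nowhere-vanishing global volume forms, hence a positive smooth function on $M$. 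Its logarithm $f := \log(\det g/\det g')$ is therefore a globally defined smooth real-valued function, and this is essentially the content of Exercise \ref{exOmega}.

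Having a global $f$ in hand, I would finish by noting that
\[
\mathrm{Ric}(\omega) - \mathrm{Ric}(\omega') = -\ddbar f = \bar\partial\bigl(-\sqrt{-1}\,\partial f\bigr),
\]
which is visibly in $\mathrm{Im}\,\bar\partial$. Therefore $[\mathrm{Ric}(\omega)] = [\mathrm{Ric}(\omega')]$ in $H^{1,1}_{\bar\partial}(M,\mathbb{R})$, proving the proposition. The only obstacle is the bookkeeping for the global definition of $f$; once that is clear, the identity is a one-line consequence of Proposition \ref{propRic}.
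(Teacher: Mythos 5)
Your proposal is correct and follows essentially the same route as the paper: the paper writes $\det g' = e^F\det g$ for a globally defined smooth $F$ and concludes $\mathrm{Ric}(\omega')=\mathrm{Ric}(\omega)-\ddbar F$, which is exactly your argument with $F=-f$. The only difference is that you spell out the cancellation of Jacobian factors that makes $F$ global, a point the paper leaves implicit (and which is the content of Exercise \ref{exOmega}).
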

\begin{proof}
Let $\omega'= \sqrt{-1} g'_{i\ov{j}}dz^i \wedge d\ov{z}^j$ be any other K\"ahler metric.  Then
$$\det g' = e^F \det g,$$
for some smooth function $F: M \rightarrow \mathbb{R}$.  Then
\[
\textrm{Ric}(\omega') = - \ddbar \log \det g' = - \ddbar \log \left( e^F \det g \right) = \textrm{Ric}(\omega) - \ddbar F,
\]
which implies that $[\textrm{Ric}(\omega')] = [\textrm{Ric}(\omega)]$.
\end{proof}

Note that the manifolds in Examples \ref{P1krf}, \ref{T2krf} and \ref{gg1krf} have $c_1(M)>0$, $c_1(M)=0$ and $c_1(M)<0$ respectively.

\begin{exer} \label{prod}
 Let $M = M_1 \times M_2$ be a product of two K\"ahler manifolds $(M_1, \omega_1)$ and $(M_2, \omega_2)$, and write $\pi_1:M \rightarrow M_1$ and $\pi_2: M \rightarrow M_2$ for the projection maps.  Let $\omega = \pi_1^* \omega_1 + \pi_2^* \omega_2$ be the product of the two metrics $\omega_1$ and $\omega_2$,  a K\"ahler metric on $M$.  
 \begin{enumerate}
\item[(a)]  Show that
$$\textrm{Ric}(\omega) = \pi_1^* \textrm{Ric}(\omega_1) + \pi_2^* \textrm{Ric}(\omega_2).$$
\item[(b)]  Suppose that $\omega_1(t)$ and $\omega_2(t)$ solve the K\"ahler-Ricci flow on $M_1$ and $M_2$ respectively, starting at $\omega_1$ and $\omega_2$.  Then show that $\omega(t) = \pi_1^* \omega_1(t) + \pi_2^* \omega_2(t)$ solves the K\"ahler-Ricci flow on $M$.
\end{enumerate}
\end{exer}

\begin{exer} \label{exerP1P1}
Let $M = \mathbb{P}^1 \times \mathbb{P}^1$ and write $\omega_{\textrm{KE}} = 2 \omega_{\textrm{FS}}$ for the K\"ahler-Einstein metric on $\mathbb{P}^1$, which we recall has $\textrm{Ric}(\omega_{\textrm{KE}})=\omega_{\textrm{KE}}$.  Let $\omega$ be the K\"ahler metric on $M$ given by the product of these K\"ahler-Einstein metrics on $\mathbb{P}^1$.  Show that
$$c_1(M) = [\omega].$$
\end{exer}

\section{Maximal existence time for the K\"ahler-Ricci flow} \label{sectionmax1}

We now return to the K\"ahler-Ricci flow (\ref{krf}), and observe that if $\omega(t)$ is a solution of the  flow then the cohomology classes $[\omega]=[\omega(t)]$ must evolve by
\begin{equation} \label{krfc}
\frac{d}{dt} [\omega] = - c_1(M), \quad [\omega]|_{t=0} = [\omega_0].
\end{equation}
This simple ODE system has the solution
$$[\omega(t)] = [\omega_0] - tc_1(M).$$
Hence, as long as a solution to the K\"ahler-Ricci flow exists, we must have
$$[\omega_0] - t c_1(M) >0,$$
since this element of $H^{1,1}_{\ov{\partial}}(M, \mathbb{R})$ contains the K\"ahler metric $\omega(t)$.  The maximal existence time theorem for the K\"ahler-Ricci flow states that this necessary condition is sufficient for existence of a solution:

\begin{thm} \label{thmmaximal}
There exists a unique maximal solution to the K\"ahler-Ricci flow (\ref{krf}) starting at $\omega_0$ for $t \in [0,T)$, where
\begin{equation} \label{deT}
T = \sup \{ t >0 \ | \ [\omega_0] - tc_1(M)>0\}.
\end{equation}
\end{thm}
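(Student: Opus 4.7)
The plan is to prove this in three steps: the upper bound $T_{\max} \leq T$, the reduction of (\ref{krf}) to a scalar parabolic equation on a fixed manifold, and uniform $C^\infty$ a priori estimates that extend short-time solutions. The inequality $T_{\max} \leq T$ is already essentially contained in the discussion preceding the statement: along any solution the cohomology class evolves by (\ref{krfc}), so $[\omega(t)] = [\omega_0] - tc_1(M)$ must lie in $\textrm{Ka}(M)$ for as long as the flow exists. Uniqueness will fall out of the scalar formulation below by the parabolic maximum principle applied to the difference of two solutions.

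For existence on $[0, T)$, I fix any $T' < T$. Since $[\omega_0] - T'c_1(M)$ is K\"ahler by definition of $T$, pick a K\"ahler form $\omega_{T'}$ representing it; then $\chi := (\omega_0 - \omega_{T'})/T'$ is a smooth closed real $(1,1)$-form with $[\chi] = c_1(M)$, and the convex combination $\hat{\omega}_t := \omega_0 - t\chi = (1 - t/T')\omega_0 + (t/T')\omega_{T'}$ is a K\"ahler metric for every $t \in [0, T']$. By the $\partial \ov{\partial}$-lemma there is a smooth positive volume form $\Omega$ on $M$ with $-\ddbar \log \Omega = \chi$ (take $\Omega = e^{-h}\omega_0^n/n!$, where $h$ solves $\ddbar h = \chi - \Ric(\omega_0)$). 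Seeking $\omega(t) = \hat{\omega}_t + \ddbar \vp$ with $\vp|_{t=0}=0$, a direct computation using $\Ric(\omega) = -\ddbar \log \omega^n$ turns (\ref{krf}) into the scalar parabolic complex Monge-Amp\`ere equation
\begin{equation*}
\frac{\partial \vp}{\partial t} = \log \frac{(\hat{\omega}_t + \ddbar \vp)^n}{\Omega}, \qquad \vp|_{t=0} = 0,
\end{equation*}
after absorbing a harmless time-dependent additive constant into $\vp$. Short-time existence is standard: the linearization at any admissible $\vp$ is $\partial_t - \Delta_{\omega(t)}$, a uniformly parabolic linear operator, so the inverse function theorem in parabolic H\"older spaces gives a smooth solution on some $[0, \ve)$.

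Let $[0, \tau)$ with $\tau \leq T'$ be the maximal interval of smooth existence. It suffices to establish uniform $C^\infty$ bounds on $\vp$ there, depending only on $T'$ and the fixed data $\omega_0, \omega_{T'}, \Omega$: then $\vp$ extends smoothly to $\tau$ with $\hat{\omega}_\tau + \ddbar \vp(\tau)$ still K\"ahler, and restarting the flow at $\tau$ by short-time existence contradicts maximality unless $\tau = T'$. Since $T' < T$ was arbitrary, the flow exists on all of $[0, T)$. Producing these a priori estimates is the main technical obstacle, and the standard hierarchy I would follow is: (i) a $C^0$ bound on $\vp$ from the maximum principle; (ii) a $C^0$ bound on $\dot\vp$ by differentiating the equation in $t$ and applying the maximum principle, supplemented with auxiliary combinations such as $\dot\vp - A\vp$ to absorb sign-bad terms; (iii) an upper bound on $\textrm{tr}_{\hat{\omega}_t}\omega(t)$ via an Aubin--Yau / Chern--Lu type differential inequality for $\log \textrm{tr}_{\hat{\omega}_t}\omega(t) - A\vp$, which combined with the $\dot\vp$ bound pinches all eigenvalues of $\hat{\omega}_t + \ddbar \vp$ uniformly from above and below; and finally (iv) the Evans--Krylov theorem together with the parabolic Schauder estimates applied to the linearized equation to bootstrap to $C^\infty$. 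Uniqueness on each $[0, T']$ then follows by subtracting two solutions of the scalar equation and applying the parabolic maximum principle.
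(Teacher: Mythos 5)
Your proposal is correct, and its analytic core---choosing reference metrics $\hat{\omega}_t$ along the segment $[\omega_0]-tc_1(M)$, reducing (\ref{krf}) to the scalar parabolic complex Monge-Amp\`ere equation, and running the hierarchy of estimates ($C^0$ on $\varphi$, then on $\dot{\varphi}$, then the trace bound, then Evans--Krylov and Schauder bootstrapping)---is exactly the argument in the text. The one structural difference is how short-time existence and uniqueness are handled: the notes invoke Hamilton's theorem for the Ricci flow as a black box, let $[0,S)$ be the resulting maximal interval, and derive a contradiction from $S<T$ by extending the solution past $S$, whereas you prove short-time existence directly for the scalar equation by the inverse function theorem in parabolic H\"older spaces (legitimate, and arguably cleaner, since that equation is strictly parabolic and no DeTurck-type trick is needed) and propose to get uniqueness from the maximum principle applied to the difference of two potentials. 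Your route is more self-contained on the PDE side; its only extra obligation is to verify that \emph{every} solution of (\ref{krf}) is captured by the scalar reduction (the converse direction of the equivalence, which the notes leave as Exercise \ref{ex31}) before scalar uniqueness transfers to uniqueness of the flow itself. With that point made explicit, both arguments are complete and yield the same conclusion on each $[0,T']$ with $T'<T$.
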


We say that a solution $\omega(t)$ for $t\in [0,T)$ to the K\"ahler-Ricci flow starting at $\omega_0$  is \emph{maximal} if there does not exist a solution starting at $\omega_0$ on $[0,T')$ for any $T'>T$.

The result Theorem \ref{thmmaximal} is due to Cao \cite{Cao} in the special case when $c_1(M)$ is zero, positive or negative.  In this generality, it was proved by Tian-Zhang \cite{TZha}; weaker versions of the result appeared earlier in the work of Tsuji \cite{Ts1, Ts2}.

Theorem \ref{thmmaximal} says that the flow exists for as long as the straight line path $t \mapsto [\omega_0] - tc_1(M)$ remains in the K\"ahler cone.  There are four possibilities:
\begin{enumerate}
\item[(a)] The path  $t \mapsto [\omega_0] - tc_1(M)$ hits zero.  This can only occur if $c_1(M)>0$ and $[\omega_0] = Tc_1(M)$ with $T<\infty$. 
\item[(b)] The K\"ahler class does not move.  This occurs if and only if $c_1(M)=0$.
\item[(c)]  The path $t \mapsto [\omega_0] - tc_1(M)$ remains in the K\"ahler cone for all time.  This could occur if $c_1(M)<0$, for example.
\item[(d)]  The path $t \mapsto [\omega_0] - tc_1(M)$ hits a non-zero element of the boundary of the K\"ahler cone.  This kind of behavior often occurs, as we will discuss later.  The behavior of the flow will depend on the kind of boundary element that the path hits (see Example \ref{P1P1e} for a simple illustration of this.)
\end{enumerate}
These are illustrated by Figure \ref{fkrf}.

\medskip

\begin{figure}[h!] 

\begin{tikzpicture}[scale=5, important line/.style={thick},
    dashed line/.style={dashed,->, thick},
        every node/.style={color=black} ]
\draw[important line] (0,0) coordinate (A) -- (.4,.8) coordinate (B) node[right, text width=5em] {};
\draw[important line] (-.4,.8) coordinate (B) node[left, text width=2em]{(a)} -- (0,0) coordinate (A) node[right, text width=5em]{$0$};
\draw[fill] (.05,.45) circle (.3pt);
\draw[directed, dashed, thick] (.05, .45) coordinate (C) node[right, text width=5em] {$[\omega_0]$} -- (0,0) coordinate (A);

\draw[important line] (1.25,0) coordinate (D) -- (1.65,.8) coordinate (E) node[right, text width=5em] {};
\draw[important line] (.85,.8) coordinate (F) node[left, text width=2em]{(b)} -- (1.25,0) coordinate (D) node[right, text width=5em]{$0$};
\draw[fill] (1.3,.45) circle (.3pt);
\draw[directed, dashed, thick] (1.3, .45) coordinate (G) node[right, text width=5em] {$[\omega_0]$} -- (1.3,0.45) coordinate (D);
\end{tikzpicture}    

\begin{tikzpicture}[scale=5, important line/.style={thick},
    dashed line/.style={dashed,->, thick},
        every node/.style={color=black} ]
\draw[important line] (0,0) coordinate (A) -- (.4,.8) coordinate (B) node[right, text width=5em] {};
\draw[important line] (-.4,.8) coordinate (B) node[left, text width=2em]{(c)} -- (0,0) coordinate (A) node[right, text width=5em]{$0$};
\draw[fill] (.05,.45) circle (.3pt);
\draw[directed, dashed, thick] (.05, .45) coordinate (C) node[right, text width=5em] {$[\omega_0]$} -- (-.05,.8) coordinate (A);

\draw[important line] (1.25,0) coordinate (D) -- (1.65,.8) coordinate (E) node[right, text width=5em] {};
\draw[important line] (.85,.8) coordinate (F) node[left, text width=2em]{(d)} -- (1.25,0) coordinate (D) node[right, text width=5em]{$0$};
\draw[fill] (1.3,.45) circle (.3pt);
\draw[directed, dashed, thick] (1.3, .45) coordinate (G) node[right, text width=5em] {$[\omega_0]$} -- (1,0.5) coordinate (D);
\end{tikzpicture}    

\caption{The K\"ahler-Ricci flow at the level of cohomology classes} \label{fkrf}
\end{figure}
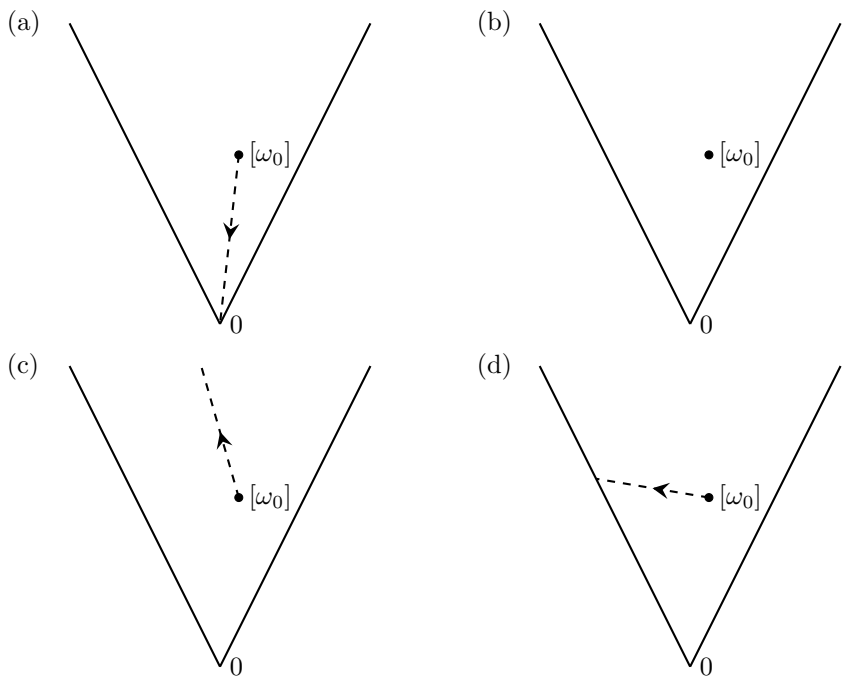

\begin{example}
Let $M$ be a Riemann surface.  Then  $H^{1,1}_{\ov{\partial}}(M, \mathbb{R})$ is one-dimensional and the K\"ahler cone is the open half line.
The three behaviors (a), (b) and (c) occur when $\mu=1$, $\mu=0$ and $\mu = -1$ respectively.
\end{example}

\begin{example} \label{P1P1e}  Let $M= \mathbb{P}^1 \times \mathbb{P}^1$ as in Exercise \ref{exerP1P1}.  The space $H^{1,1}_{\ov{\partial}}(M, \mathbb{R})$ is spanned by the classes $\alpha_i = [\pi_i^* \omega_{\textrm{KE}}]$ for $i=1,2$, using the obvious notation.  The K\"ahler cone is given by
$$\textrm{Ka}(M) = \{ x \alpha_1 + y \alpha_2 \ | \ x,y  \in \mathbb{R}^{>0} \}.$$
From Exercise \ref{exerP1P1}, the first Chern class of $M$ is $c_1(M) = \alpha_1+\alpha_2$.  Suppose that the initial metric is a product of K\"ahler-Einstein metrics 
$$\omega_0 = x \, \pi_1^* \oke + y \, \pi_2^*\oke \in x\alpha_1 + y \alpha_2.$$   From Exercise \ref{prod} we see that there are three possible behaviors of the K\"ahler-Ricci flow depending on the values of $x$, $y$.  

\medskip

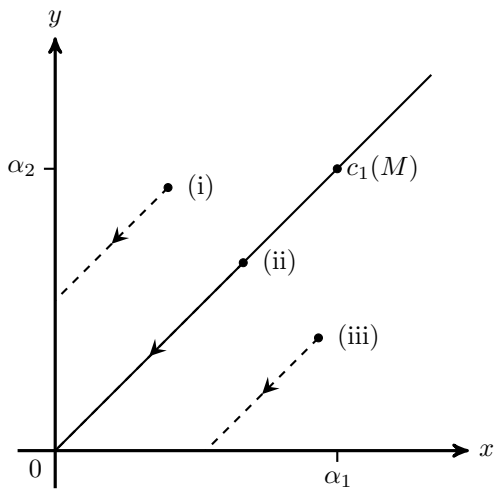
\begin{figure}[h!]
\begin{center}
\begin{tikzpicture}[scale=5, important line/.style={thick}, ax
    dashed line/.style={dashed,->, thick}, axis/.style={very thick, ->, >=stealth'},
        every node/.style={color=black} ]
        \draw[axis] (-0.1,0)  -- (1.1,0) node(xline)[right]
        {$x$};
    \draw[axis] (0,-0.1) -- (0,1.1) node(yline)[above] {$y$};
    \draw[important line] (0,0) -- (1,1);
    \draw[directed, dashed, thick] (.3, .7) node[right, text width=2em] {\ (i)}  -- (0,.4);
    \draw[directed, dashed, thick] (.5, .5) node[right, text width=2em] {\ (ii)} -- (0,.0);
    \draw[directed, dashed, thick] (.7, .3)  node[right, text width=2em] {\ (iii)} -- (0.4,.0);
    \draw[fill] (.3,.7) circle (.3pt);
    \draw[fill] (.7,.3) circle (.3pt);
    \draw[fill] (.5,.5) circle (.3pt);
    \draw[fill] (.75,.75) circle (.3pt) node[right, text width=2em] {$c_1(M)$};
    \draw (0,0) node[left, below, text width=2em] {$0$};
    \draw[important line] (0.75,0) -- (0.75,-.03) node[below, text width=1em] {$\alpha_1$};
    \draw[important line] (0,0.75) -- (-0.03,.75) node[left, text width=1em] {$\alpha_2$};
 \end{tikzpicture}    
\end{center}
\caption{Three behaviors of the K\"ahler-Ricci flow on $\mathbb{P}^1 \times \mathbb{P}^1$} \label{fp1p1}
\end{figure} 

\begin{enumerate}
\item[(i)] $[\omega_0]$ lies above the diagonal.  Then the path $t \mapsto [\omega_0] - tc_1(M)$ hits a boundary element which is a multiple of $[\alpha_2]$.  The first $\mathbb{P}^1$ shrinks to zero as $t \rightarrow T$ and the flow converges to a multiple of the K\"ahler-Einstein metric on the second $\mathbb{P}^1$.
\item[(ii)]  $[\omega_0]$ lies on the diagonal $x=y$.  Then the path $t \mapsto [\omega_0] - tc_1(M)$ hits zero and the two $\mathbb{P}^1$'s shrink simultaneously to a point in finite time.
\item[(iii)] $[\omega_0]$ lies below the diagonal.  The same behavior as in (i) with the roles of the two $\mathbb{P}^1$'s  reversed.
\end{enumerate}
These are illustrated by Figure \ref{fp1p1}.
\end{example}

\begin{exer} \label{ex26} Let $E$ be a torus and $S$ a surface of genus $>1$, with K\"ahler metrics $\omega_E$ and $\omega_S$ respectively, satisfying
$$\Ric(\omega_E)=0, \quad \Ric(\omega_S)= -\omega_S.$$
Let $M = E \times S$.
\begin{enumerate}
\item[(a)] Find $c_1(M)$ and show that  $T=\infty$ for every choice of initial metric $\omega_0$.
\item[(b)] Write down the solution $\omega(t)$ of the K\"ahler-Ricci flow starting with $\omega_0$ equal to the product of $\omega_E$ and $\omega_S$.
\item[(c)] For your solution from (b), describe geometrically what is happening to $\omega(t)/t$ as $t \rightarrow \infty$.
\end{enumerate}
\end{exer}

\begin{exer} \label{ex27} Fix a K\"ahler manifold $(M, \omega_0)$.  We say that a class $\alpha \in H_{\ov{\partial}}^{1,1}(M, \mathbb{R})$ is \emph{nef} is for all $\ve>0$ there exists $\omega_{\ve} \in \alpha$ with $\omega_{\ve} \ge - \ve \omega_0$.  
\begin{enumerate}
\item[(a)] Show that a class $\alpha$ is nef if and only if it is in the closure of the K\"ahler cone of $M$.
\item[(b)] Show that $T$ given by (\ref{deT}) can be written as
$$T = \sup\{ t>0 \ | \ [\omega_0] - tc_1(M) \textrm{ is nef} \}.$$
\end{enumerate}
\end{exer}

\chapter{The parabolic complex Monge-Amp\`ere equation}

In this lecture we describe how the K\"ahler-Ricci flow can be reduced to a parabolic complex Monge-Amp\`ere equation.  We use this description to prove the maximal existence time result for the flow.

\section{Reduction to the complex Monge-Amp\`ere equation}

We wish to prove Theorem \ref{thmmaximal} which states that there is a unique maximal solution to the K\"ahler-Ricci flow (\ref{krf})
\begin{equation*}
\ddt{} \omega = - \textrm{Ric}(\omega), \quad \omega|_{t=0} = \omega_0,
\end{equation*}
 starting at $\omega_0$ on $[0,T)$ for 
$$T = \sup \{ t >0 \ | \ [\omega_0] - tc_1(M)>0\}.$$

The idea is to reduce the K\"ahler-Ricci flow equation to a \emph{parabolic complex Monge-Amp\`ere equation}.  We will briefly discuss now this terminology.

On $\mathbb{C}^n$ the complex Monge-Amp\`ere operator is the determinant of the complex Hessian:
$$\varphi \mapsto \det \left( \frac{\partial^2 \varphi}{\partial z^i \partial \ov{z}^j} \right), \textrm{ for $\varphi$ with} \left( \frac{\partial^2 \varphi}{\partial z^i \partial \ov{z}^j} \right) \ge 0.$$
However, on a compact K\"ahler manifold $(M,g)$, this last condition is too strong, since it would imply that $\varphi$ is constant (see Exercise \ref{exerddbar}).  It is natural to replace the complex Hessian $(\partial_i \partial_{\ov{j}} \varphi)$ by $(g_{i\ov{j}} + \partial_i \partial_{\ov{j}} \varphi)$.  We define
the complex Monge-Amp\`ere operator on $M$ to be
$$\varphi \mapsto \det \left( g_{i\ov{j}} + \frac{\partial^2 \varphi}{\partial z^i \partial \ov{z}^j} \right), \textrm{ for $\varphi$ with }
\left( g_{i\ov{j}} + \frac{\partial^2 \varphi}{\partial z^i \partial \ov{z}^j} \right)\ge 0.$$
There are many functions $\varphi$ satisfying $g_{i\ov{j}} + \partial_i \partial_{\ov{j}} \varphi>0$, and indeed these functions parametrize (modulo constants) the space of K\"ahler metrics in the same cohomology class as $g$.

By \emph{parabolic complex Monge-Amp\`ere equation}, we mean a nonlinear equation of the form
$$\ddt{} \varphi = \log  \frac{\det \left( g_{i\ov{j}} + \frac{\partial^2 \varphi}{\partial z^i \partial \ov{z}^j} \right)}{\det g}+f(\varphi,t),$$
for some function $f$.   The metric $g$ may in general have some dependence on $t$.

Before we show that the K\"ahler-Ricci flow (\ref{krf}) is equivalent to such an equation,  we need the following well known  theorem of Hamilton \cite{H1} (see \cite{DeT} for a shorter proof, and \cite{CLN} for a recent exposition). 

\begin{thm} \label{hamilton} Given any compact K\"ahler manifold $(M, g_0)$,
there exists a unique solution of the K\"ahler-Ricci flow on a maximal time interval $[0, S)$ for some $S$ with $0<S \le \infty$.
\end{thm}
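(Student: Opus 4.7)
The plan is to reduce the tensor evolution equation \eqref{krf} to a scalar parabolic complex Monge-Amp\`ere equation, for which standard (nonlinear) parabolic PDE theory supplies short-time existence and uniqueness. This side-steps the fact that \eqref{krf} is only weakly parabolic for the metric tensor because of the diffeomorphism invariance of $\mathrm{Ric}$.

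\textbf{Step 1: pick a reference path in cohomology.} Set $\hat\omega(t) = \omega_0 - t\,\mathrm{Ric}(\omega_0)$. This is a smooth family of real $(1,1)$-forms with $[\hat\omega(t)] = [\omega_0] - t c_1(M)$, and $\hat\omega(0) = \omega_0 > 0$, so $\hat\omega(t)$ remains a K\"ahler form for $t$ in some small interval $[0, S_0)$. Suppose $\omega(t)$ solves \eqref{krf} on $[0, S)$. Then $[\omega(t)] = [\hat\omega(t)]$ by \eqref{krfc}, so the $\partial\bar\partial$-lemma produces a smooth family of potentials $\varphi(t)$ with $\varphi(0) \equiv 0$ and
$$\omega(t) = \hat\omega(t) + \ddbar \varphi(t).$$
Differentiating and using $\mathrm{Ric}(\omega) - \mathrm{Ric}(\omega_0) = -\ddbar \log(\omega^n/\omega_0^n)$ from Proposition \ref{propRic}, together with $\partial_t \hat\omega = -\mathrm{Ric}(\omega_0)$, the flow \eqref{krf} becomes $\ddbar(\partial_t\varphi - \log(\omega^n/\omega_0^n)) = 0$. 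Absorbing the resulting additive function of $t$ into $\varphi$ (which only shifts $\varphi$ by a function of $t$ and does not change $\omega$) yields the scalar \emph{parabolic complex Monge-Amp\`ere equation}
\begin{equation} \label{pma}
\frac{\partial \varphi}{\partial t} = \log \frac{(\hat\omega(t) + \ddbar\varphi)^n}{\omega_0^n}, \qquad \varphi(0) = 0,
\end{equation}
with the constraint $\hat\omega(t) + \ddbar\varphi(t) > 0$. Conversely, any smooth solution of \eqref{pma} with this positivity gives a solution of \eqref{krf} by setting $\omega(t) = \hat\omega(t) + \ddbar\varphi(t)$, and this correspondence is one-to-one.

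\textbf{Step 2: short-time existence and uniqueness for \eqref{pma}.} At $t=0$, $\varphi = 0$ and $\hat\omega(0) = \omega_0 > 0$, so the constraint holds strictly. The linearization of the right-hand side of \eqref{pma} at a background $(\varphi,t)$ acting on a variation $u$ is $g^{i\bar j}(t)\, \partial_i \partial_{\bar j} u$, where $g(t)$ is the metric of $\hat\omega(t)+\ddbar\varphi(t)$; at $(0,0)$ this is exactly $\Delta_{\omega_0}$. Hence the equation is strictly parabolic in a neighborhood of the initial data. I would set up \eqref{pma} as a map between parabolic H\"older spaces, $F: C^{2+\alpha, 1+\alpha/2}(M \times [0,\tau]) \to C^{\alpha, \alpha/2}(M \times [0,\tau])$, $F(\varphi) = \partial_t\varphi - \log\frac{(\hat\omega+\ddbar\varphi)^n}{\omega_0^n}$, and apply the inverse function theorem (or a contraction-mapping argument) at $\varphi = 0$: the linearization $\partial_t - \Delta_{\omega_0}$ is an isomorphism onto the right H\"older spaces with the zero initial condition, by standard linear parabolic Schauder theory on compact manifolds. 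This yields a unique short-time $C^{2+\alpha,1+\alpha/2}$ solution on $[0,\tau]$ for some $\tau>0$, and bootstrapping via Schauder estimates applied to successive spatial derivatives of \eqref{pma} upgrades the solution to $C^\infty$. Uniqueness on any common interval follows from the same linearized theory (or directly from the parabolic maximum principle applied to the difference of two solutions, exploiting the concavity of $\log\det$ on Hermitian positive matrices).

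\textbf{Step 3: maximal interval.} The set of times where a smooth solution of \eqref{krf} starting at $\omega_0$ exists is nonempty by Step 2, and on any two overlapping solutions must agree by uniqueness in Step 2 (both lift to potentials solving \eqref{pma}); so we can take the union and obtain a unique maximal solution on $[0,S)$ for some $S$ with $0 < S \le \infty$. The main obstacle is Step 2: every other step is formal or follows from earlier material. The hard input is the short-time solvability of the nonlinear parabolic PDE \eqref{pma}, which rests on linear parabolic Schauder theory on compact manifolds applied to the Laplacian of the initial K\"ahler metric; I am taking this standard theory for granted here.
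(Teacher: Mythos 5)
Your argument is essentially correct, but it is a genuinely different route from the one the paper takes: the paper does not prove Theorem \ref{hamilton} at all, instead quoting it as a special case of Hamilton's short-time existence and uniqueness theorem for the general Ricci flow on compact Riemannian manifolds \cite{H1}, with DeTurck's gauge-fixing trick \cite{DeT} as the standard way to handle the degeneracy of the Ricci operator. You instead exploit the K\"ahler structure from the outset: the $\partial\ov{\partial}$-Lemma reduces the tensor flow (\ref{krf}) to a single scalar, strictly parabolic complex Monge-Amp\`ere equation, for which short-time existence follows from the inverse function theorem in parabolic H\"older spaces and uniqueness from the maximum principle applied to the difference of two potentials (your integral-representation/concavity argument for the linearized coefficients is the right one). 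This buys a proof that is self-contained within K\"ahler geometry, avoids both DeTurck's trick and Nash--Moser, and anticipates exactly the reduction the paper later uses to prove Theorem \ref{thmmaximal} (compare your normalization of the potential with the solution of Exercise \ref{ex31}); the price is that it does not generalize to the Riemannian Ricci flow, and it requires the small but real care you partially gloss over, namely (i) that a K\"ahler-Ricci flow solution lifts to a \emph{smooth-in-$t$} normalized potential (cleanest via the ODE $\dot{\varphi}=\log(\omega^n/\omega_0^n)$, $\varphi|_{t=0}=0$, which forces $\omega=\hat{\omega}+\ddbar\varphi$), so that uniqueness for the scalar equation really transfers to uniqueness for (\ref{krf}), and (ii) the standard technical point in the inverse function theorem step that the neighborhood on which the linearization $\partial_t-\Delta_{\omega_0}$ can be inverted must be arranged uniformly as $\tau\to 0$ (e.g.\ by first solving the linear equation to produce a good approximate solution). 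Both points are routine, and this scalar approach is the one carried out in, e.g., \cite{SW4}; as a blind reconstruction it is a legitimate and arguably more instructive proof than the citation the paper gives.
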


 In fact, Hamilton's theorem holds for the general Ricci flow on a compact  Riemannian manifold. 
Theorem \ref{hamilton} is just the statement that a solution to the K\"ahler-Ricci flow exists for some short time and is unique.  It then follows immediately that there must exist a solution on a maximal time interval.  The theorem does not tell us anything about the quantity $S$.  The point of Theorem \ref{thmmaximal}  is that $S=T$, where $T$ has the simple formulation (\ref{deT}) in terms of cohomology classes.

We now begin the proof of Theorem \ref{thmmaximal}.  By the discussion of Section \ref{sectionmax1}, we must have $S \le T$.  Since we wish to show that $T=S$, we will assume for a contradiction that $S<T$.

First we pick a smooth family of reference K\"ahler metrics.  Since $S<T$, the cohomology class $[\omega_0]-Sc_1(M)$ lies in the K\"ahler cone and hence contains a K\"ahler metric $\hat{\omega}_S$, say. We define $t\mapsto \hat{\omega}_t$ for $t \in [0,S]$ to be the linear path of metrics between $\omega_0$ and $\hat{\omega}_S$ (Figure \ref{fref}).  Namely:
\begin{equation} \label{hatot}
\hat{\omega}_t = \frac{1}{S} ((S-t)\omega_0 + t \hat{\omega}_S)= \omega_0 + t\chi \in [\omega_0] - tc_1(M),
\end{equation}
where we define 
\begin{equation} \label{chi}
\chi = \frac{1}{S} (\hat{\omega}_S -\omega_0) \in - c_1(M).
\end{equation}

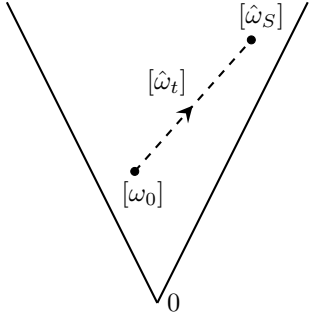
\begin{figure}[h!] 
\begin{tikzpicture}[scale=5, important line/.style={thick},
    dashed line/.style={dashed,->, thick},
        every node/.style={color=black} ]
        tik
\draw[important line] (0,0) coordinate (A) -- (.4,.8) coordinate (B) node[right, text width=5em] {};
\draw[important line] (-.4,.8) coordinate (B)  -- (0,0) coordinate (A) node[right, text width=5em]{$0$};

\draw[fill] (-.06,.35) circle (.3pt);
\draw[fill] (.25, .70) circle (.3pt);
\draw[directed, dashed, thick] (-.06, .35) coordinate (C) node[below,  text width=1em] {$[\omega_0]$} -- (.25,.70) coordinate (A) node[above, text width=1em]{$[\hat{\omega}_S]$}
node[midway, above, text width=3.5em] {$[\hat{\omega}_t]$};

\end{tikzpicture}    

\caption{The path of reference metrics $\hat{\omega}_t$ in the K\"ahler cone} \label{fref}
\end{figure}

We now make the key claim:  that
 the K\"ahler-Ricci flow (\ref{krf}) is equivalent to the parabolic complex Monge-Amp\`ere equation
\begin{equation} \label{ma}
\ddt{\varphi} = \log \frac{(\hat{\omega}_t + \ddbar \varphi)^n}{\Omega}, \quad \hat{\omega}_t + \ddbar \varphi>0, \quad \varphi|_{t=0} =0,
\end{equation}
where $\Omega$ is a volume form with 
\begin{equation} \label{ddbarOmega}
\ddbar \log \Omega = \chi, \quad \int_M \Omega = \int_M \omega_0^n,
\end{equation}
(recall Exercise \ref{exOmega}).  

First, why does there exist a volume form $\Omega$ satisfying (\ref{ddbarOmega})? 
Since $-\chi \in c_1(M)$ and $-\ddbar \log \omega_0^n \in c_1(M)$ we have
$$\chi = \ddbar \log \omega_0^n + \ddbar f= \ddbar \log (\omega_0^n e^f),$$
for some real valued function $f$.  Hence we may take $\Omega = \omega_0 e^{f+c}$ where $c$ is a constant chosen so that $\int_M \Omega = \int_M \omega_0^n$.

We now prove the claim.  Suppose that $\varphi$ solves (\ref{ma}) and set $\omega(t) = \hat{\omega}_t + \ddbar \varphi$.  Then
$$\ddt{} \omega = \chi + \ddbar \log \frac{\omega^n}{\Omega} = - \textrm{Ric}(\omega)$$
with $\omega|_{t=0}= \omega_0$ as required.  The other direction is left as an exercise.

\begin{exer} \label{ex31}
Finish the proof of the claim that (\ref{ma}) is equivalent to (\ref{krf}).
\end{exer}

To prove  Theorem \ref{thmmaximal} we will establish:

\begin{prop} \label{pestimates}  Let $\varphi=\varphi(t)$ solve (\ref{ma}) on $[0,S)$.  Then for each $k=0,1,2, \ldots$ there exists a positive constant $A_k$ such that on $[0,S)$,
$$\| \varphi \|_{C^k(M)} \le A_k, \quad \hat{\omega}_t + \ddbar \varphi \ge \frac{1}{A_0} \omega_0.$$
\end{prop}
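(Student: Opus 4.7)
The plan is to establish the estimates in the standard order for a parabolic complex Monge-Amp\`ere equation: first a $C^{0}$ bound on $\varphi$ and $\dot\varphi$, then a $C^{2}$ bound giving two-sided equivalence of $\omega(t)$ with a fixed background metric, then $C^{2,\alpha}$ and higher derivative bounds by bootstrapping. Throughout, I would exploit the assumption $S<T$: the reference path $\hat\omega_{t}$ from (\ref{hatot}) stays uniformly inside the K\"ahler cone on the compact interval $[0,S]$, so there exists a constant $C>0$ with $C^{-1}\omega_{0}\le \hat\omega_{t}\le C\omega_{0}$ on $[0,S]$. All constants below are allowed to depend on $S$, $\omega_{0}$, $\hat\omega_{S}$ and $\Omega$.

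For the $C^{0}$ estimate, I would apply the maximum principle to $\varphi$ and to $\dot\varphi$. A bound on $\varphi$ can be obtained either directly from the equation by combining an upper/lower barrier with a maximum principle argument, or by first bounding $\dot\varphi$ and integrating in time. For $\dot\varphi$, differentiating (\ref{ma}) gives a linear parabolic equation of the form $(\partial_{t}-\Delta_{\omega})\dot\varphi = \tr{\omega}{\chi}$, and together with a second derivative of (\ref{ma}) one obtains a maximum principle for $\dot\varphi$ (alternatively, compare $\dot\varphi$ with the auxiliary function $\dot\varphi+A\varphi$ for a suitable $A$). Since $t\in[0,S]$ is a bounded interval, integrating $\dot\varphi$ in time then yields $\|\varphi\|_{C^{0}}\le A_{0}$.

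The main work is the second order estimate, which simultaneously gives the lower bound $\omega(t)\ge A_{0}^{-1}\omega_{0}$. I would use the Aubin--Yau argument: compute $(\partial_{t}-\Delta_{\omega})\log \tr{\hat\omega_{t}}{\omega}$, where the bisectional curvature of $\hat\omega_{t}$ enters through commutation of covariant derivatives (Proposition \ref{propcom}). Since $\hat\omega_{t}$ is smooth on $[0,S]\times M$ with uniformly bounded curvature, the bad term is controlled by $C\tr{\omega}{\hat\omega_{t}}$, and this is absorbed by considering the quantity $\log\tr{\hat\omega_{t}}{\omega}-A\varphi$ for sufficiently large $A$, whose negative Laplacian contributes a favorable $A\,\tr{\omega}{\hat\omega_{t}}$ term from $\Delta_{\omega}\varphi=n-\tr{\omega}{\hat\omega_{t}}$. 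The maximum principle then bounds $\tr{\hat\omega_{t}}{\omega}$, which combined with the $\dot\varphi$ bound and the equation $\dot\varphi=\log(\omega^{n}/\Omega)$ gives an upper bound for $\omega^{n}/\hat\omega_{t}^{n}$ and hence, via the arithmetic-geometric mean inequality, the lower bound $\omega\ge A_{0}^{-1}\omega_{0}$. I expect this step to be the main technical hurdle, since it is where the curvature of the reference family enters and where one must carefully choose the auxiliary constant $A$.

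Once $\omega(t)$ is uniformly equivalent to $\omega_{0}$, the equation (\ref{ma}) becomes uniformly parabolic with $C^{0}$ coefficients, so the Evans--Krylov theorem (or Calabi's $C^{3}$ estimate applied to $|\nabla g'|^{2}_{g'}$) yields a uniform $C^{2,\alpha}$ bound on $\varphi$ on $[0,S)$. Higher order bounds $\|\varphi\|_{C^{k}}\le A_{k}$ then follow by standard parabolic Schauder bootstrapping applied to the differentiated equation. This will complete Proposition \ref{pestimates} and, by Theorem \ref{hamilton} applied at time $S$ to the limiting metric $\omega_{0}+\ddbar\varphi(S)+S\chi$, contradict the maximality of $S<T$.
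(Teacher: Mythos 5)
Your proposal follows essentially the same route as the paper: maximum-principle bounds on $\varphi$ and $\dot{\varphi}$ (Lemmas \ref{lemmaphi} and \ref{lemdotphi}), the Aubin--Yau/Cao trace estimate applied to $\log \tr{\omega_0}{\omega} - A\varphi$ combined with the volume bound to get two-sided control of $\omega(t)$ (Lemma \ref{lemtr} and Exercise \ref{exeq}; the paper traces against the fixed $\omega_0$ rather than $\hat{\omega}_t$, a harmless difference), and Evans--Krylov plus Schauder bootstrapping for the higher-order bounds. The only point to watch is the order of the zeroth-order estimates: the auxiliary quantities used to bound $\dot{\varphi}$ contain $\varphi$ itself, so the direct maximum-principle bound on $\varphi$ should come first, as in Lemma \ref{lemmaphi}, rather than being deduced by integrating $\dot{\varphi}$ in time.
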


The point here is that the bounds are independent of $t$. Recall that the $C^k$ norm of a function is defined by taking the sum of the $\sup$ norms of the $0$th through $k$th derivatives of the function (with respect to some fixed Riemannian metric).

Given Proposition \ref{pestimates}, we will complete the proof of Theorem \ref{thmmaximal}.  Since the bounds on $\varphi$ are independent of $t$ in $[0,S)$ we can apply the Arzel\`a-Ascoli Theorem to see that for a sequence of times $t_i \rightarrow S$,
$$\varphi(t_i) \rightarrow \varphi(S) \quad \textrm{in $C^{\infty}$ as } i \rightarrow \infty,$$
for a smooth function $\varphi(S)$ with $\omega(S):=\omega_0 + \ddbar \varphi(S) \ge \frac{1}{C_0} \omega_0 >0$.  Indeed, the Arzel\`a-Ascoli Theorem says in particular that if $\{ f_j \}$ is a sequence of functions bounded in $C^{k+1}(M)$ then there exists a convergence subsequence   in $C^k(M)$.  Using a diagonal argument, the $C^{\infty}$ estimates of Proposition \ref{pestimates} give $C^{\infty}$ convergence (i.e. convergence in every $C^k$ norm) of $\varphi(t_i)$ for a sequence of times $t_i \rightarrow S$.

In fact since $\dot{\varphi}$ is bounded, $\varphi(t)$ converges to a unique $\varphi(S)$ as $t\rightarrow S$ (see Exercise {\ref{ex45} below).  Now apply Hamilton's Theorem \ref{hamilton} to obtain a solution to the K\"ahler-Ricci flow starting at $\omega(S)$, for at least some short time $[0,\ve)$.  Putting the two solutions together we obtain a solution to the K\"ahler-Ricci flow starting at $\omega_0$ on the interval $[0, S+\ve)$. But this contradicts the maximality of $S$.

\begin{exer}  \label{ex45} For $t \in [0,T_0)$ (with $0< T_0 \le \infty$), 
let $f_t:M \rightarrow \mathbb{R}$ be a family of smooth functions which are uniformly bounded in $C^{\infty}$, independent of $t$.  
\begin{enumerate}
\item[(a)]  Suppose that for some function $f :M \rightarrow \mathbb{R}$, we have
$$
f_t \rightarrow f \  \textrm{pointwise on $M$, as } t \rightarrow T_0.
$$
Then $f_t$ converges in $C^{\infty}$ to $f$.  In particular, $f$ is smooth.
\item[(b)] Suppose that $T_0<\infty$ and $\partial{f}_t/\partial t$ is uniformly bounded.  Then show that there exists a unique smooth function $f:M \rightarrow \mathbb{R}$ such that $f_t$ converges in $C^{\infty}$ to $f$.
\end{enumerate}
\end{exer}

It remains then to prove Proposition \ref{pestimates}.

\section{Estimates on $\varphi$ and $\dot{\varphi}$}

In this and the next section, we will make use of the \emph{maximum principle} to prove Proposition \ref{pestimates}.  We use only a simple version of the maximum principle,  a consequence of elementary calculus, which can be stated as follows:

\bigskip
\noindent
{\it The maximum principle.} \ 
Let  $f=f(x,t)$ be a smooth function on $M \times [0,a]$ for $a>0$ and $M$ a compact manifold. Then $f$ achieves a global maximum at some point $(x_0, t_0) \in M \times [0,a]$.  At $x_0$ we have
$$\ddbar f (x_0, t_0) \le 0.$$
The sign of $\displaystyle{\ddt{f}(x_0, t_0)}$ depends on the value of $t_0$.  There are three cases (Figure \ref{ff}):
\begin{enumerate}
\item[(i)] If $t_0=0$ then $\displaystyle{\ddt{f}(x_0, t_0)\le 0}$.
\item[(ii)] If $t_0 \in (0,a)$ then $\displaystyle{\ddt{f}(x_0, t_0)=0}$.
\item[(iii)] If $t_0=a$ then $\displaystyle{\ddt{f}(x_0, t_0)\ge 0}$.
\end{enumerate}
In particular, if $t_0 \neq 0$ then $\displaystyle{\ddt{f}(x_0, t_0)\ge 0}$. 

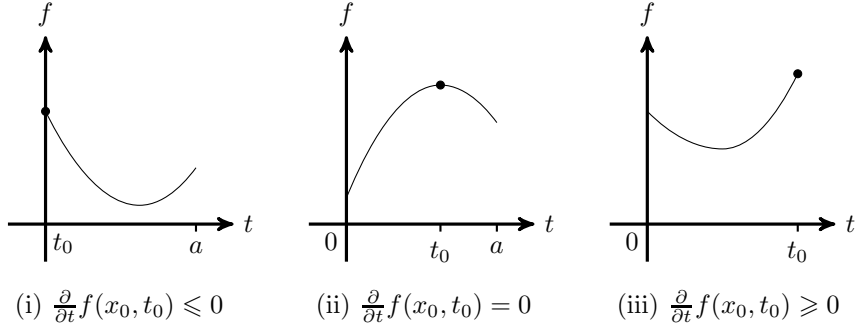
\begin{figure}[h!]
\begin{center}
\begin{tikzpicture}[scale=5, important line/.style={thick}, ax
    dashed line/.style={dashed,->, thick}, axis/.style={very thick, ->, >=stealth'},
        every node/.style={color=black} ]
        \draw[axis] (-0.1,0)  -- (.5,0) node(xline)[right]
        {$t$};
    \draw[axis] (0,-0.1) -- (0,.5) node(yline)[above] {$f$};
    \draw(0,.3) parabola bend (0.25, .05) (.4, .15);
    \draw[fill] (0,.3) circle (.3pt);
     \draw (0.1,0) node[right, below, text width=2.3em] {$t_0$};
 \draw[important line] (.4,0) -- (.4, -.02) node[below] {$a$};
 \draw (0.27,-.15) node[below, text width=10em] {(i) $\ddt{f}(x_0, t_0) \le 0$};
 
         \draw[axis] (.7,0)  -- (1.3,0) node(xline)[right]
        {$t$};
    \draw[axis] (0.8,-0.1) -- (.8,.5) node(yline)[above] {$f$};
    \draw(0.8,.07) parabola bend (1.05, .37) (1.2, .27);
    \draw[fill] ((1.05, .37) circle (.3pt);
         \draw (0.9,0) node[left, below, text width=4.5em] {$0$};
        \draw[important line] (1.05,0) -- (1.05, -.02) node[below] {$t_0$};
      \draw[important line] (1.2,0) -- (1.2, -.02) node[below] {$a$};
      \draw (1.07,-.15) node[below, text width=10em] {(ii) $\ddt{f}(x_0, t_0) = 0$};
     
              \draw[axis] (1.5,0)  -- (2.1,0) node(xline)[right]
        {$t$};
    \draw[axis] (1.6,-0.1) -- (1.6,.5) node(yline)[above] {$f$};
    \draw(1.6,.3) parabola bend (1.8, .2) (2.0, .4);
    \draw[fill] (2,.4) circle (.3pt);
              \draw (1.7,0) node[left, below, text width=4.5em] {$0$};
           \draw[important line] (2.0,0) -- (2.0, -.02) node[below] {$t_0$};
               \draw (1.87,-.15) node[below, text width=10em] {(iii) $\ddt{f}(x_0, t_0) \ge 0$};
 \end{tikzpicture}  
   
\end{center}
\caption{The time derivative at the maximum of $f$} \label{ff}
\end{figure}

Of course we can replace maximum by minimum, if we reverse all the inequalities.

We apply the maximum principle to prove estimates on $\varphi=\varphi(t)$ which we assume solves (\ref{ma}) on $[0,S)$.  The following two lemmas are due to Tian-Zhang \cite{TZha}.

\begin{lem} \label{lemmaphi} There exists a uniform constant $C$ so that on $M \times [0,S)$,
$$ |\varphi| \le C.$$
\end{lem}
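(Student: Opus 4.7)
The plan is to use the maximum principle in its simplest incarnation, applied separately to the two functions $\varphi - At$ and $\varphi + Bt$ for suitably chosen constants $A, B > 0$. The key auxiliary observation is that, since we have assumed $S < T$, the reference path $\hat{\omega}_t$ defined by (\ref{hatot}) is a smooth family of genuine K\"ahler metrics on the closed interval $[0,S]$, so the ratio $\hat{\omega}_t^n / \Omega$ is a smooth strictly positive function on $M \times [0,S]$ and therefore uniformly bounded between two positive constants. Set
\[
C_1 = \sup_{M \times [0,S]} \log \frac{\hat{\omega}_t^n}{\Omega}, \qquad C_2 = \inf_{M \times [0,S]} \log \frac{\hat{\omega}_t^n}{\Omega}.
\]

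For the upper bound on $\varphi$, I would consider $H(x,t) = \varphi(x,t) - At$ with $A > C_1$. Fix any $S' < S$ and let $H$ attain its maximum on $M \times [0,S']$ at $(x_0, t_0)$. If $t_0 > 0$, then at that point $\ddbar \varphi \le 0$ and $\partial_t \varphi \ge A$, so the equation (\ref{ma}) gives
\[
A \le \ddt{\varphi} = \log \frac{(\hat{\omega}_{t_0} + \ddbar \varphi)^n}{\Omega} \le \log \frac{\hat{\omega}_{t_0}^n}{\Omega} \le C_1,
\]
contradicting the choice of $A$. Hence $t_0 = 0$, so $H \le H(x_0, 0) = 0$, giving $\varphi \le A S' \le A S$, and since this bound is independent of $S' < S$ it holds on all of $M \times [0,S)$.

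The lower bound is symmetric: take $\tilde H = \varphi + Bt$ with $B > -C_2$, and at an interior minimum $(x_0, t_0)$ one has $\ddbar \varphi \ge 0$ and $\partial_t \varphi \le -B$, so $\hat{\omega}_{t_0} + \ddbar \varphi \ge \hat{\omega}_{t_0}$ and
\[
-B \ge \ddt{\varphi} = \log \frac{(\hat{\omega}_{t_0} + \ddbar \varphi)^n}{\Omega} \ge \log \frac{\hat{\omega}_{t_0}^n}{\Omega} \ge C_2,
\]
again a contradiction. Hence $\tilde H \ge 0$ and $\varphi \ge -BS$.

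The only thing that requires care, rather than calculation, is the observation that the assumption $S < T$ is exactly what makes $\hat{\omega}_t^n/\Omega$ uniformly controlled on the closed interval $[0,S]$ — this is the step where the cohomological hypothesis enters. Once that is in hand, both halves of the bound are immediate consequences of the maximum principle stated in the text, with explicit constants $C = \max(AS, BS)$ depending only on $\omega_0$, $\hat{\omega}_S$, $\Omega$ and $S$. I do not anticipate a genuine obstacle; the one minor subtlety is ensuring the maximum principle is applied on a compact sub-interval $[0,S']$ (so that a maximum is actually attained) and then passing $S' \to S$.
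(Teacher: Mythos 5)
Your proposal is correct and is essentially the paper's own argument: the paper likewise applies the maximum principle to $\varphi - At$ with $A = \sup_{M \times [0,S]} \log(\hat{\omega}_t^n/\Omega) + 1$ on compact sub-intervals $[0,a] \subset [0,S)$, using $S < T$ to guarantee the reference metrics $\hat{\omega}_t$ form a smooth family of K\"ahler metrics on $[0,S]$, and handles the lower bound symmetrically. The only cosmetic difference is that you phrase the contradiction via $\partial_t \varphi \ge A$ versus the paper's $\partial_t \psi \le -1$, which is the same inequality.
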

\begin{proof}
Define a function $\psi = \varphi - At$ for a constant $A$ which we will specify later.  Compute
$$\ddt{\psi} = \log \frac{(\hat{\omega}_t+\ddbar \varphi)^n}{\Omega} - A =  \log \frac{(\hat{\omega}_t+\ddbar \psi)^n}{\Omega} - A.$$
At a point 
when $\ddbar \psi \le 0$ we have  $\hat{\omega}_t+\ddbar \psi \le \hat{\omega}_t$ and so
$$\log \frac{(\hat{\omega}_t+\ddbar \psi)^n}{\Omega} \le \log \frac{\hat{\omega}_t^n}{\Omega}.$$ 
We now pick
$$A = \sup_{M \times [0,S]} \log \frac{\hat{\omega}_t^n}{\Omega}+1.$$
Observe that $\hat{\omega}_t$ is a smooth family of K\"ahler metrics on $[0,S]$ (this uses the assumption that $S<T$) and hence $A$ is a uniform constant.    Then if $\ddbar \psi \le 0$, we have
$$\ddt{\psi} \le -1.$$
We can now use this to deduce that the maximum of $\psi$ must occur at $t=0$, which gives an upper bound for $\psi$ and hence for $\varphi$.

Let's make this last part more precise.  Fix $a \in (0,S)$.  Then $\psi$ is a smooth function on $M \times [0,a]$.  Suppose that $\psi$ achieves a maximum at $(x_0, t_0)$.  If $t_0>0$ then $\ddt{\psi} \ge 0$ at $(x_0, t_0)$, contradicting the inequality $\ddt{\psi} \le -1$ above.  Hence the maximum of $\psi$ on $M \times [0,a]$ is achieved at $t=0$.  Recalling that $\varphi|_{t=0}=0$, we obtain that $\psi \le 0$ on $M \times [0,a]$ and hence
$$\varphi \le At \le AS \quad \textrm{on } M \times [0,a].$$
Since $a$ was an arbitrary number in $(0,S)$, this gives the uniform upper bound $C=AS$ for $\varphi$ on $M \times [0,S)$.

The lower bound is left as an exercise.
\end{proof}

\begin{exer}  \label{ex32} Complete the proof of Lemma \ref{lemmaphi}.\end{exer}

Next we bound $\dot{\varphi}: = \frac{\partial \varphi}{\partial t}$, which is equivalent to a bound on the volume form of the evolving metric.  

\begin{lem} \label{lemdotphi}
There exists a uniform constant $C>0$ so that on $M \times [0,S)$,
$$ |\dot{\varphi}| \le C,$$
and  
$$C^{-1} \Omega \le \omega^n \le C \Omega.$$
\end{lem}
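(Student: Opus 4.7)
The plan is to apply the maximum principle to the time derivative $\dot{\varphi}$ of the Monge-Amp\`ere equation (\ref{ma}). First, observe that from the equation itself, $\dot\varphi = \log(\omega^n/\Omega)$, so a two-sided bound on $\dot\varphi$ is exactly the same as a two-sided bound on $\omega^n/\Omega$; thus it suffices to prove $|\dot\varphi| \le C$.

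Differentiating (\ref{ma}) in $t$ and using the formula (\ref{logdet}) for the derivative of $\log\det$, together with $\dot{\hat\omega}_t = \chi$, yields the evolution equation
\[
\Big(\ddt{} - \Delta_\omega\Big)\dot\varphi \;=\; \tr{\omega}{\chi},
\]
where $\Delta_\omega f = g^{i\ov{j}}\partial_i\partial_{\ov{j}} f$ and $g$ is the metric associated to $\omega = \hat\omega_t + \ddbar\varphi$. The right-hand side is not signed and, a priori, not bounded as $\omega$ could degenerate, so the maximum principle cannot be applied directly to $\dot\varphi$. The standard trick is to introduce the auxiliary quantities
\[
H_\pm \;=\; \dot\varphi \mp A\varphi
\]
for a suitably large constant $A>0$, and use $\Delta_\omega \varphi = \tr{\omega}{\ddbar\varphi} = n - \tr{\omega}{\hat\omega_t}$ to absorb the bad term. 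A direct computation gives
\[
\Big(\ddt{} - \Delta_\omega\Big)H_\pm \;=\; \tr{\omega}{(\chi \mp A\hat\omega_t)} \pm A n \mp A\dot\varphi.
\]

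Here the key point is that because we are assuming $S<T$, the reference family $\hat\omega_t$ from (\ref{hatot}) is a smooth family of \emph{K\"ahler} metrics on the compact interval $[0,S]$, hence uniformly bounded below by a positive multiple of $\omega_0$, while $\chi$ is a fixed smooth form. Thus for $A$ sufficiently large (uniform in $t$) we have $A\hat\omega_t \pm \chi \ge 0$, which forces $\tr{\omega}{(\chi - A\hat\omega_t)} \le 0$ (for $H_-$) and $\tr{\omega}{(-\chi - A\hat\omega_t)} \le 0$ (for $H_+$). Applying the maximum principle as stated in the previous section to $H_-$ at its maximum $(x_0,t_0)$: if $t_0 = 0$ then $H_-(x_0,0) = \dot\varphi(x_0,0) = \log(\omega_0^n/\Omega)$ is controlled by the initial data, while if $t_0>0$ one has $\ddt{}H_- \ge 0$ and $\Delta_\omega H_- \le 0$, yielding $An - A\dot\varphi \ge 0$ at that point, i.e.\ $\dot\varphi(x_0,t_0) \le n$, hence $H_-(x_0,t_0) \le n + AC_0$ by Lemma \ref{lemmaphi}. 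This gives a uniform upper bound on $\dot\varphi$. The analogous argument applied to $H_+$ at its minimum yields the lower bound.

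The main obstacle here is really the absence of a sign on $\tr{\omega}{\chi}$; once one sees that adding a large multiple of $\varphi$ produces the helpful term $-A\tr{\omega}{\hat\omega_t}$, which dominates, the rest is routine maximum principle bookkeeping combined with Lemma \ref{lemmaphi}. The crucial structural input that makes the scheme work is the assumption $S<T$ (so that $[0,S] \subset [0,T)$ and $\hat\omega_t$ remains uniformly K\"ahler), precisely the hypothesis driving the contradiction argument for Theorem \ref{thmmaximal}.
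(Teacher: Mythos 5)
Your reduction of the volume bound to $|\dot\varphi|\le C$, the evolution equation $(\ddt{}-\Delta)\dot\varphi=\tr{\omega}{\chi}$, and the upper bound via $\dot\varphi-A\varphi$ with $A\hat\omega_t\ge\chi$ are all correct and coincide with the paper's argument (the upper bound is exactly Exercise \ref{ex33}). But the lower bound is the harder half, and your claim that ``the analogous argument'' delivers it does not hold up. Check the signs: with $H=\dot\varphi+A\varphi$ one gets $(\ddt{}-\Delta)H=\tr{\omega}{(\chi+A\hat\omega_t)}+A\dot\varphi-An$, and at an interior \emph{minimum} the maximum principle gives $(\ddt{}-\Delta)H\le 0$, hence $A\dot\varphi\le An-\tr{\omega}{(\chi+A\hat\omega_t)}\le An$ --- an \emph{upper} bound on $\dot\varphi$ at that point, which is useless. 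With $H=\dot\varphi-A\varphi$ at its minimum one instead gets $A\dot\varphi\ge An+\tr{\omega}{(\chi-A\hat\omega_t)}$, where $\tr{\omega}{(\chi-A\hat\omega_t)}\le -c\,\tr{\omega}{\omega_0}$ is negative and \emph{a priori unbounded below} (no control on $\tr{\omega}{\omega_0}$ is available at this stage), so the inequality is vacuous. In either variant the $\dot\varphi$ or trace term left over has the unfavorable sign, and no choice of $\pm A$ fixes this. (There is also an internal sign inconsistency between your displayed formula for $(\ddt{}-\Delta)H_\pm$ and the subsequent $H_+$/$H_-$ labels, but that is cosmetic next to this issue.)

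The paper's proof (following Tian--Zhang) resolves exactly this difficulty with the quantity $Q=(S-t+\ve)\dot\varphi+\varphi+nt$: differentiating the decreasing coefficient $(S-t+\ve)$ produces a term $-\dot\varphi$ that \emph{cancels} the $+\dot\varphi$ arising from $(\ddt{}-\Delta)\varphi$, and the remaining traces telescope to
$$\left(\ddt{}-\Delta\right)Q=\tr{\omega}{\big((S-t+\ve)\chi+\omega_0+t\chi\big)}=\tr{\omega}{(\hat\omega_S+\ve\chi)}>0$$
for $\ve$ small, with no $\dot\varphi$ and no negative trace term surviving; the minimum of $Q$ then sits at $t=0$ and the lower bound $\dot\varphi\ge -C/\ve$ follows from Lemma \ref{lemmaphi}. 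An alternative rescue of your scheme is possible --- at the interior minimum of $\dot\varphi+A\varphi$ one can invoke the arithmetic--geometric mean inequality $\tr{\omega}{\omega_0}\ge n(\omega_0^n/\omega^n)^{1/n}=n e^{-\dot\varphi/n}(\omega_0^n/\Omega)^{1/n}$, so that the exponential beats the linear term $A\dot\varphi$ as $\dot\varphi\to-\infty$ --- but that extra input appears nowhere in your proposal, and without it the lower bound is a genuine gap.
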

\begin{proof}  
For the lower bound of $\dot{\varphi}$ define 
$$Q = (S-t+\ve)\dot{\varphi} + \varphi +nt,$$
where $\ve>0$ is a positive constant to be determined later.  Differentiating (\ref{ma}) with respect to $t$ and  recalling (\ref{logdet}), (\ref{hatot}) and (\ref{chi}),
$$\ddt{\dot{\varphi}} = \tr{\omega}{\left(\ddt{} (\hat{\omega}_t + \ddbar \varphi) \right)} = \Delta \dot{\varphi} + \tr{\omega}{\chi},$$
where $\tr{}{}$ is defined by Exercise \ref{extr}.  Here, the Laplace operator $\Delta$ is defined by
$$\Delta f : = \tr{\omega} {(\ddbar f)} = g^{i\ov{j}} \partial_i \partial_{\ov{j}} f,$$
for a function $f$.
It then follows that
\[
\begin{split}
\left( \ddt{} - \Delta \right) Q = {} & (S-t+\ve) \tr{\omega}{\chi} - \dot{\varphi} + \dot{\varphi} - \Delta \varphi + n \\
={} & (S-t+\ve) \tr{\omega}{\chi} - \tr{\omega}{(\omega - \hat{\omega}_t)} + n \\
={} & \tr{\omega}{((S-t+\ve)\chi + \omega_0 + t\chi)} \\
= {} & \tr{\omega}{(\hat{\omega}_S+\ve \chi)} >0,
\end{split}
\]
if we choose $\ve>0$ small enough so that $\hat{\omega}_S + \ve \chi$ is positive definite (recall that $\hat{\omega}_S$ is a K\"ahler metric).
We now apply the maximum principle (strictly speaking, the minimum principle).  If $Q$ achieves a minimum on some compact time interval then at that minimum, we must have either $t_0=0$ or both $\ddt{Q} \le 0$ and $\Delta Q \ge 0$.  But the above inequality shows that the latter cannot occur and hence the minimum of $Q$ must occur at $t_0=0$.  It follows that for $t \in [0,S)$,
$$(S-t+\ve)\dot{\varphi} + \varphi + nt \ge -C,$$
and hence
$$\dot{\varphi} \ge  \frac{1}{\ve}\left( - C- nS - \sup_{M \times [0,S)} |\varphi|\right) := C',$$
since $|\varphi|$ is bounded by Lemma \ref{lemmaphi}.  The upper bound for $\dot{\varphi}$ is left as an exercise.

The estimate on the volume form follows immediately from the equation (\ref{ma}) and the bound on $\dot{\varphi}$.
\end{proof}

\begin{exer} \label{ex33}
Complete the proof of Lemma \ref{lemdotphi}.
\end{exer}

A remark about constants:  in what follows we will often use $C, C'$ etc to denote a uniform constant (the uniformity should be clear from the context) which may differ from line to line.

\section{Estimate on the metric}

We next bound the evolving metric $\omega=\omega(t)$.

\begin{lem} \label{lemtr}
There exists a unform constant $C$ such that on $M \times [0,S)$,
$$\emph{tr}_{\omega_0}{\, \omega} \le C.$$
\end{lem}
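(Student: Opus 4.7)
The plan is to follow the standard Aubin--Yau / Cao computation: apply the maximum principle to a quantity of the form
$$
Q = \log (\textrm{tr}_{\omega_0} \omega) - A\varphi,
$$
where $A$ is a large constant to be chosen. The fixed metric $\omega_0$ plays the role of a background with bounded bisectional curvature, and the perturbation $-A\varphi$ is used to absorb the ``bad'' term coming from this curvature.

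First I would compute the evolution $(\partial_t - \Delta)\log (\textrm{tr}_{\omega_0}\omega)$, where $\Delta$ is the Laplacian of the evolving metric $\omega$. Using $\partial_t g_{i\ov j} = -R_{i\ov j}(\omega)$ from the K\"ahler--Ricci flow, together with the commutation formulas of Proposition \ref{propcom} and the Bianchi identity, one obtains the Aubin--Yau type inequality
$$
\Bigl(\ddt{} - \Delta\Bigr) \log (\textrm{tr}_{\omega_0}\omega) \;\leq\; C_0 \, \textrm{tr}_{\omega} \omega_0,
$$
where $C_0$ depends only on an upper bound for the bisectional curvature of the fixed metric $\omega_0$ (independent of $t$). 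This is the standard calculation; the key point is that the Ricci terms coming from differentiating $\textrm{tr}_{\omega_0}\omega$ cancel against the evolution term $-\textrm{Ric}(\omega)$ in the K\"ahler--Ricci flow.

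Next, I compute $\Delta \varphi = \textrm{tr}_{\omega}(\omega - \hat{\omega}_t) = n - \textrm{tr}_{\omega} \hat{\omega}_t$. Since $\hat{\omega}_t$ is a smooth family of K\"ahler metrics on the compact interval $[0,S]$ (using $S<T$), there exists $c_0>0$ with $\hat{\omega}_t \geq c_0 \omega_0$ uniformly, so $\textrm{tr}_{\omega}\hat{\omega}_t \geq c_0 \textrm{tr}_{\omega} \omega_0$. Combining,
$$
\Bigl(\ddt{} - \Delta\Bigr) Q \;\leq\; (C_0 - A c_0)\,\textrm{tr}_{\omega}\omega_0 - A \dot\varphi + An.
$$
Choosing $A = (C_0+1)/c_0$ and using the bound on $\dot\varphi$ from Lemma \ref{lemdotphi},
$$
\Bigl(\ddt{} - \Delta\Bigr) Q \;\leq\; -\textrm{tr}_{\omega}\omega_0 + C_1.
$$

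Applying the maximum principle on $M\times [0,a]$ for arbitrary $a\in (0,S)$, at an interior maximum we obtain $\textrm{tr}_{\omega}\omega_0 \leq C_1$ (the $t_0=0$ case is trivial since $\varphi|_{t=0}=0$). To convert this into a bound on $\textrm{tr}_{\omega_0}\omega$, I use the pointwise inequality
$$
\textrm{tr}_{\omega_0}\omega \;\leq\; \frac{1}{(n-1)!}(\textrm{tr}_\omega \omega_0)^{n-1} \frac{\omega^n}{\omega_0^n},
$$
which follows by diagonalizing $\omega$ with respect to $\omega_0$. Lemma \ref{lemdotphi} gives a uniform upper bound on $\omega^n/\omega_0^n$, so at the maximum of $Q$, $\textrm{tr}_{\omega_0}\omega$ is bounded. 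Combined with the uniform bound on $\varphi$ from Lemma \ref{lemmaphi}, this bounds $Q$ at its maximum, hence everywhere, and exponentiating together with the $\varphi$ bound gives the desired estimate.

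The main obstacle is the curvature computation $(\partial_t -\Delta)\log \textrm{tr}_{\omega_0}\omega \leq C_0 \textrm{tr}_\omega \omega_0$: one must carefully track the terms arising from commuting covariant derivatives (using Proposition \ref{propcom}), from $\partial_t (\textrm{tr}_{\omega_0}\omega)$, and from the Cauchy--Schwarz inequality applied to $|\nabla \textrm{tr}_{\omega_0}\omega|^2$, showing in particular that all terms quadratic in the derivatives of $g$ can be absorbed and only the curvature of $\omega_0$ survives. Once this is in hand the rest is a routine application of the maximum principle.
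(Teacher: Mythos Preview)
Your proposal is correct and follows essentially the same route as the paper: the same quantity $Q = \log(\tr{\omega_0}\omega) - A\varphi$, the same Aubin--Yau/Cao inequality $(\partial_t - \Delta)\log\tr{\omega_0}\omega \le C_0\,\tr{\omega}\omega_0$ (the paper derives this by an explicit computation in normal coordinates and then proves the gradient term $(\dagger)\le 0$ via a Cauchy--Schwarz argument, which is the ``main obstacle'' you flag), the same choice of $A$ using $\hat\omega_t \ge c_0\omega_0$ on $[0,S]$, and the same conversion from $\tr{\omega}\omega_0$ to $\tr{\omega_0}\omega$ at the maximum via the volume bound (the paper packages this as Exercise~\ref{exeq}, which is exactly your diagonalization inequality). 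One small wording issue: the constant $C_0$ depends on a \emph{lower} bound for the bisectional curvature of $\omega_0$ (or simply a two-sided bound, which holds by compactness), not just an upper bound.
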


This result follows from the argument of Cao \cite{Cao}, and is a parabolic version of an estimate of Yau \cite{Y2} and Aubin \cite{A} (an alternative approach is to prove a parabolic Schwarz lemma \cite{Y1, SoT1}).
Note that once we have Lemma \ref{lemtr} together with the bound on the volume from Lemma \ref{lemdotphi}, we have
\begin{equation} \label{omegadb}
C^{-1} \omega_0 \le \omega \le C \omega_0,
\end{equation}
for a uniform positive constant $C>0$.  Indeed,  this follows from the next exercise:

\begin{exer} \label{exeq}
Let $\omega$ and $\omega_0$ be K\"ahler metrics.  Suppose there exists a uniform constant $C>0$ such that
$$\frac{1}{C} \omega_0^n \le \omega^n \le C \omega_0^n.$$
Then 
$$\tr{\omega_0}{\omega} \le C_1 \quad \Longleftrightarrow \quad \tr{\omega}{\omega_0} \le C_2 \quad \Longleftrightarrow \quad \frac{1}{C_3} \omega_0 \le \omega \le C_3 \omega_0,$$
where each constant $C_i>0$ can depend on $C$ and the constant $C_j$ in the bound that is being assumed.  So for example, in proving the first implication $\Longrightarrow$, $C_2$ may depend on $C$ and $C_1$.
{\it Hint:  choose coordinates so that $g_0$ is the identity and $g$ is diagonal.}
\end{exer}

\begin{proof}[Proof of Lemma \ref{lemtr}]
We require two key calculations.  The first is
\begin{equation} \label{calc1}
\left( \ddt{} - \Delta \right) \log \tr{\omega_0}{\omega} = - \frac{1}{\tr{\omega_0}{\omega}} g^{i\ov{j}} R_{\, i\ov{j}}^{0 \ \, \ov{\ell} k} g_{k\ov{\ell}} + (\dagger),
\end{equation}
where
\begin{equation} \label{daggerdef}
(\dagger) = - \frac{g_0^{k\ov{\ell}} g^{i\ov{j}} g^{p\ov{q}} \nabla^0_i g_{k\ov{q}} \nabla^0_{\ov{j}} g_{p\ov{\ell}}}{\tr{\omega_0}{\omega}} + \frac{| \partial \tr{\omega_0}{\omega}|^2_g}{(\tr{\omega_0}{\omega})^2}.
\end{equation}
Here, we are using $R_{\, i\ov{j}}^{0 \ \, \ov{\ell} k}$ and $\nabla^0$ to denote the curvature tensor and covariant derivative with respect to $g_0$.
The second calculation is the inequality:
\begin{equation} \label{daggerex}
(\dagger) \le 0,
\end{equation}
which we will leave as an exercise (see below).

Given these two calculations, we can easily complete the proof of the lemma.  We define a quantity
$$Q= \log \tr{\omega_0}{\omega} - A \varphi,$$
where $A \ge 1$ is a constant to be determined soon.  We note that by an elementary local calculation,
$$| g^{i\ov{j}} R_{\, i\ov{j}}^{0 \ \, \ov{\ell} k} g_{k\ov{\ell}} | \le C_0 (\tr{\omega}{\omega_0})(\tr{\omega_0}{\omega}),$$
for a uniform constant $C_0$ which depends only on the curvature of $g_0$.  Then from (\ref{calc1}), (\ref{daggerdef}), (\ref{daggerex}) we have
\[
\begin{split}
\left( \ddt{} - \Delta \right) Q \le {} & C_0 \tr{\omega}{\omega_0} - A \dot{\varphi} + A \Delta \varphi \\
= {} & C_0 \tr{\omega}{\omega_0} - A\dot{\varphi} + A \tr{\omega}{(\omega- \hat{\omega}_t)} \\
\le {} & \tr{\omega}{(C_0 \omega_0 - A \hat{\omega}_t)} + C'A,
\end{split}
\]
where we have used the fact that $\dot{\varphi}$ is bounded.  Now choose $A$ large enough so that
$$C_0 \omega_0 - A\hat{\omega}_t \le - \omega_0.$$
Since the family of reference metrics $\hat{\omega}_t$ is uniformly bounded, the constant $A$ is uniform.  Then at a point $(x_0, t_0)$ where  $Q$ achieves a maximum, assuming that $t_0 \neq 0$, we have by the maximum principle,
$$0 \le - \tr{\omega}{\omega_0} + C'A,$$
and hence $\tr{\omega}{\omega_0} \le C$ at $(x_0, t_0)$.  From Exercise \ref{exeq} we obtain at $(x_0, t_0)$,
$$\tr{\omega_0}{\omega} \le C.$$
But since $\varphi$ is uniformly bounded, we see that $Q$ is bounded from above at $(x_0, t_0)$ and hence 
$$\log \tr{\omega_0}{\omega} - A\varphi \le C \qquad \textrm{on } M \times [0,S).$$
Note that the case when $t_0=0$ is trivial.
Again using the fact that $\varphi$ is uniformly bounded we obtain
$$\tr{\omega_0}{\omega} \le C \qquad \textrm{on } M \times [0,S),$$
as required.

It remains to establish (\ref{calc1}).  Since it is an inequality of tensors, we are free to choose any coordinate system centered at a fixed point $x$, say.  We choose holomorphic normal coordinates for $g_0$ as provided by Lemma \ref{lemnormal}.  First compute using (\ref{krf}),
\begin{equation} \label{ddttr}
\ddt{} \log \tr{\omega_0}{\omega} = \frac{1}{\tr{\omega_0}{\omega}} \tr{\omega_0}{\left( \ddt{} \omega \right)} = - \frac{1}{\tr{\omega_0}{\omega}} g_0^{i\ov{j}} R_{i\ov{j}}.
\end{equation}
Next, using the fact that the first derivatives of $g_0$ vanish at $x$,
\[
\Delta \tr{\omega_0}{\omega} = g^{i\ov{j}} \partial_i \partial_{\ov{j}} (g_0^{k\ov{\ell}} g_{k\ov{\ell}}) = g^{i\ov{j}} (\partial_i \partial_{\ov{j}} g_0^{k\ov{\ell}}) g_{k\ov{\ell}} + g^{i\ov{j}} g_0^{k\ov{\ell}} \partial_i \partial_{\ov{j}} g_{k\ov{\ell}}.
\]
But, applying the formula (\ref{Rf}) with the metric $g_0$,
\[
\partial_i \partial_{\ov{j}} g_0^{k\ov{\ell}} = - \partial_i ( g_0^{p\ov{\ell}} g_0^{k\ov{q}} \partial_{\ov{j}} (g_0)_{p\ov{q}}) = - g_0^{p\ov{\ell}} g_0^{k\ov{q}} \partial_i \partial_{\ov{j}} (g_0)_{p\ov{q}} = R^{0 \ \, \ov{\ell} k}_{\, i\ov{j}}.
\]
Combining the above and using (\ref{Rf}) with the metric $g$,
\[
\begin{split}
\Delta \tr{\omega_0}{\omega} = {} & g^{i\ov{j}} R^{0 \ \, \ov{\ell} k}_{\, i\ov{j}} g_{k\ov{\ell}} - g^{i\ov{j}} g_0^{k\ov{\ell}} R_{i\ov{j} k\ov{\ell}} + g^{i\ov{j}} g_0^{k\ov{\ell}} g^{p\ov{q}} \partial_i g_{k\ov{q}} \partial_{\ov{j}} g_{p\ov{\ell}} \\
= {} & g^{i\ov{j}} R^{0 \ \, \ov{\ell} k}_{\, i\ov{j}} g_{k\ov{\ell}} - g_0^{k\ov{\ell}} R_{k\ov{\ell}} + g^{i\ov{j}} g_0^{k\ov{\ell}} g^{p\ov{q}} \nabla^0_i g_{k\ov{q}} \nabla^0_{\ov{j}} g_{p\ov{\ell}},
\end{split}
\]
where for the last equality we use $g^{i\ov{j}} R_{i\ov{j} k\ov{\ell}} = R_{k\ov{\ell}}$ and, at the point $x$, $\nabla^0_i = \partial_i$.  Then from (\ref{ddttr}),
\[
\begin{split}
\lefteqn{\left(\ddt{} - \Delta \right) \log \tr{\omega_0}{\omega} } \\= {} & \ddt{} \log \tr{\omega_0}{\omega} - \frac{\Delta \tr{\omega_0}{\omega}}{\tr{\omega_0}{\omega}} + \frac{|\partial \tr{\omega_0}{\omega} |^2_g}{(\tr{\omega_0}{\omega})^2} \\
={} &  - \frac{1}{\tr{\omega_0}{\omega}} g_0^{i\ov{j}} R_{i\ov{j}} - \frac{1}{\tr{\omega_0}{\omega}} \left( g^{i\ov{j}} R^{0 \ \, \ov{\ell} k}_{\, i\ov{j}} g_{k\ov{\ell}} - g_0^{k\ov{\ell}} R_{k\ov{\ell}} + g_0^{k\ov{\ell}} g^{i\ov{j}} g^{p\ov{q}} \nabla^0_i g_{k\ov{q}} \nabla^0_{\ov{j}} g_{p\ov{\ell}} \right) \\
& + \frac{|\partial \tr{\omega_0}{\omega} |^2_g}{(\tr{\omega_0}{\omega})^2} \\
= {} & - \frac{1}{\tr{\omega_0}{\omega}} g^{i\ov{j}} R_{\, i\ov{j}}^{0 \ \, \ov{\ell} k} g_{k\ov{\ell}} + (\dagger),
\end{split}
\]
as required.
\end{proof}

In the proof, we made use of:

\begin{exer}  \label{ex35} Show that $(\dagger) \le 0$ as follows.  Define
$$B_{i\ov{j}k} = \nabla^0_i g_{k\ov{j}} - \frac{\partial_k ( \tr{\omega_0}{\omega})}{\tr{\omega_0}{\omega}} g_{i\ov{j}},$$
and then show that
$$0 \le g_0^{i\ov{q}} g^{p\ov{j}} g^{k\ov{\ell}} B_{i\ov{j}k} \ov{B_{q\ov{p} \ell}} = g_0^{i\ov{q}} g^{p\ov{j}} g^{k\ov{\ell}} \nabla^0_k g_{i\ov{j}} \nabla^0_{\ov{\ell}} g_{p\ov{q}} - \frac{| \partial \tr{\omega_0}{\omega}|^2_g}{\tr{\omega_0}{\omega}}.$$
\end{exer}

\section{Higher order estimates} \label{secthigher}

We now complete the proof of Proposition \ref{pestimates} and hence Theorem \ref{thmmaximal}.
Given Lemma \ref{lemtr}  the proof of Proposition \ref{pestimates} follows from fairly standard parabolic theory, which we will quote without proof.  We already have from (\ref{omegadb}) the estimate
$$\omega= \hat{\omega}_t + \ddbar \varphi \ge \frac{1}{C_0} \omega_0,$$
and we also have immediately
$$| \ddbar \varphi |_{g_0} \le C.$$
We can then apply the parabolic Evans-Krylov theory \cite{E, K} (for a proof in the complex setting see \cite{Gi}) 
to obtain the parabolic Schauder estimate 
$$\| \varphi \|_{C^{2+\alpha, 1+\alpha/2}} \le C,$$
where the $1+\alpha/2$ refers to the derivative in the $t$ direction.  An alternative to using Evans-Krylov is to prove ``Calabi'' and curvature estimates using the maximum principle  \cite{C1, Cao, Cha, PSS, PSSW3, ShW}. The higher order estimates for $\varphi$ follow from a simple ``bootstrap'' argument.  Indeed, we can apply the differential operator $L= \frac{\partial}{\partial x^k}$ say to (\ref{ma}) to obtain, locally,
$$\ddt{} L(\varphi) = g^{i\ov{j}} \partial_i \partial_{\ov{j}} L(\varphi) + g^{i\ov{j}} L((\hat{g}_t)_{i\ov{j}}) - L \left(\log (\Omega) \right).$$
This is a linear parabolic equation in $L(\varphi)$ with coefficients in $C^{\alpha, \alpha/2}$.  The standard parabolic estimates (see for example \cite{Lie}) give a $C^{2+\alpha, 1+\alpha/2}$ bound for $L(\varphi)$.  Since $L$ was any first order operator with constant coefficients, we obtain a bound for $\varphi$ in $C^{3+\alpha, 1+\alpha/2}$.  This gives higher regularity for the coefficients of the parabolic equation above, and we repeat the argument to obtain higher regularity for $\varphi$.  Thus we obtain bounds for $\varphi$ in all derivatives.

\chapter{Convergence results}

In this lecture we describe convergence results for the K\"ahler-Ricci flow in the case when the first Chern class of $M$ is negative or zero.

\section{Negative first Chern class}

Let $(M, \omega_0)$ be a compact K\"ahler manifold with $c_1(M)<0$.  Observe that 
$$T= \sup \{ t>0 \ | \ [\omega_0] - t c_1(M) >0 \} = \infty$$
and hence there is a solution to the K\"ahler-Ricci flow for all time.  Note however that the K\"ahler class $[\omega(t)]$ becomes unbounded as $t \rightarrow \infty$,  so there is no chance that the flow can converge to a smooth K\"ahler metric.  For this reason, we rescale the flow as follows.

Suppose that $\tilde{\omega}(s)$ solves the K\"ahler-Ricci flow $\frac{\partial}{\partial s} \tilde{\omega}(s) = - \textrm{Ric}(\tilde{\omega}(s))$ on $[0, \infty)$  with $\tilde{\omega}(0)=\omega_0$.  Define 
$$\omega(t) = \frac{1}{s+1} \tilde{\omega}(s), \quad t = \log (s+1).$$
Then $\omega(0) = \omega_0$ and
$$\ddt{} \omega(t) = -\frac{1}{(s+1)^2} \frac{ds}{dt} \, \tilde{\omega}(s) - \frac{1}{s+1} \frac{ds}{dt} \frac{\partial}{\partial s} \tilde{\omega}(s) = - \omega(t) - \textrm{Ric}(\omega(t)),$$
where we have used (\ref{scaleRic}) for the last equality. We call this equation,
\begin{equation} \label{nkrf}
\ddt{} \omega = - \textrm{Ric}(\omega) - \omega, \qquad \omega|_{t=0} = \omega_0,
\end{equation}
the \emph{normalized K\"ahler-Ricci flow}.   Whenever $c_1(M)<0$, we have a solution to this flow for all time.  Moreover, the K\"ahler class $[\omega(t)]$ satisfies the ordinary differential equation
$$\frac{d}{dt} [\omega(t)] = - c_1(M) - [\omega(t)], \quad [\omega(t)]=[\omega_0],$$
whose solution is
\begin{equation} \label{kc}
[\omega(t)] = e^{-t} [\omega_0] + (1-e^{-t}) [-c_1(M)].
\end{equation}
Thus $[\omega(t)]$ moves in a straight line from $[\omega_0]$ to the K\"ahler class $-c_1(M)$.

In this section, we prove the following theorem.

\begin{thm} \label{thmcao}
Suppose that $c_1(M)<0$.  Then starting at any K\"ahler metric $\omega_0$ the solution to the normalized K\"ahler-Ricci flow (\ref{nkrf}) exists for all time and converges in $C^{\infty}$ to a K\"ahler-Einstein metric $\omega_{\emph{KE}}$, which satisfies
\begin{equation} \label{keeqn}
\emph{Ric}(\omega_{\emph{KE}}) = - \omega_{\emph{KE}}.
\end{equation}
Moreover, $\omega_{\emph{KE}}$ is the unique K\"ahler metric solving (\ref{keeqn}).
\end{thm}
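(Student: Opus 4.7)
My plan is to reduce (\ref{nkrf}) to a parabolic complex Monge-Amp\`ere equation with a built-in damping term, obtain uniform $C^{\infty}$ estimates for $\varphi$ on $M \times [0,\infty)$ by the methods of Lecture~3, and then use the damping to upgrade boundedness to exponential convergence. Fix a K\"ahler metric $\hat{\omega} \in -c_1(M)$ and set $\hat{\omega}_t = \hat{\omega} + e^{-t}(\omega_0 - \hat{\omega}) \in [\omega(t)]$, so that $\ddt \hat{\omega}_t = \hat{\omega} - \hat{\omega}_t$ is exponentially small. Choose a smooth volume form $\Omega$ with $\ddbar \log \Omega = \hat{\omega}$ (possible since $[\hat{\omega}] = -c_1(M)$). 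Writing $\omega(t) = \hat{\omega}_t + \ddbar \varphi$, a direct calculation analogous to the one in Section~\ref{sectionmax1} shows that (\ref{nkrf}) is equivalent to
\begin{equation*}
\ddt \varphi = \log \frac{(\hat{\omega}_t + \ddbar \varphi)^n}{\Omega} - \varphi, \qquad \hat{\omega}_t + \ddbar \varphi > 0, \qquad \varphi|_{t=0} = 0.
\end{equation*}
Long-time existence of $\omega(t)$ is immediate from Theorem~\ref{thmmaximal}, since $[\omega_0] - t c_1(M) > 0$ for all $t \ge 0$.

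Next I would push through the a priori estimates of Lemmas~\ref{lemmaphi}--\ref{lemtr} on the infinite interval $[0,\infty)$. The $C^0$ bound for $\varphi$ is in fact easier than in Lecture~3 thanks to the damping: at any spacetime maximum of $\varphi$ one has $\dot{\varphi} \ge 0$ and $\ddbar \varphi \le 0$, so the equation forces $\varphi \le \log(\hat{\omega}_t^n / \Omega)$, uniformly bounded (similarly at the minimum). Differentiating the equation in $t$ yields
\begin{equation*}
\left( \ddt{} - \Delta \right) \dot{\varphi} = -\dot{\varphi} + \tr{\omega}{(\hat{\omega} - \hat{\omega}_t)},
\end{equation*}
and the maximum principle gives a uniform $\dot{\varphi}$ bound. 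The trace computation of Lemma~\ref{lemtr} goes through with $\hat{\omega}$ replacing $\omega_0$ (the extra $-\omega$ in the flow merely contributes a harmless $-1$ to $(\ddt{} - \Delta)\log \tr{\hat{\omega}}{\omega}$), giving $C^{-1} \hat{\omega} \le \omega(t) \le C \hat{\omega}$, and the Evans-Krylov plus bootstrap argument of Section~\ref{secthigher} upgrades this to uniform $C^k$ bounds for every $k$.

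For the exponential rate I would apply the maximum principle to $v = e^{\beta t} \dot{\varphi}$ for any $\beta \in (0,1)$: since the metric is uniformly equivalent to $\hat{\omega}$ and $\hat{\omega} - \hat{\omega}_t = e^{-t}(\hat{\omega} - \omega_0)$, the evolution $(\ddt{} - \Delta) v = (\beta-1) v + e^{\beta t} \tr{\omega}{(\hat{\omega} - \hat{\omega}_t)}$ has a negative coefficient on $v$ and bounded forcing of order $e^{(\beta-1)t}$, so $v$ is uniformly bounded, giving $|\dot{\varphi}(t)| \le C e^{-\beta t}$. Integrating produces a pointwise limit $\varphi_\infty$, and Exercise~\ref{ex45}(a) combined with the uniform $C^\infty$ bounds upgrades this to $C^\infty$ convergence $\varphi(t) \to \varphi_\infty$. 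Setting $\omega_{\textrm{KE}} = \hat{\omega} + \ddbar \varphi_\infty$ and passing to the limit in the equation (using $\dot{\varphi} \to 0$) gives $\varphi_\infty = \log(\omega_{\textrm{KE}}^n / \Omega)$, whose $\ddbar$ rearranges to $\textrm{Ric}(\omega_{\textrm{KE}}) = -\omega_{\textrm{KE}}$. For uniqueness, any two K\"ahler-Einstein solutions $\omega, \omega'$ of (\ref{keeqn}) lie in $-c_1(M)$ and so differ by $\omega' = \omega + \ddbar \psi$; subtracting the two equations and applying Exercise~\ref{exerddbar} forces $\psi = \log((\omega')^n / \omega^n) + c$, whence $(\omega + \ddbar \psi)^n = e^{\psi - c} \omega^n$, and comparing at the extrema of $\psi$ (where the sign of $\ddbar \psi$ is definite) forces $\psi \equiv c$ and $\omega' = \omega$. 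The main technical obstacle I anticipate is verifying that the trace estimate of Lemma~\ref{lemtr} remains uniform over all of $[0,\infty)$; once that is in hand the remainder of the argument is a direct parabolic analogue of the Aubin-Yau proof of the Calabi conjecture in the $c_1 < 0$ case.
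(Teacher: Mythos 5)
Your overall route is the same as the paper's: reduce (\ref{nkrf}) to the damped Monge-Amp\`ere equation (\ref{nma}) with the linear path of reference metrics and a volume form $\Omega$ satisfying $\ddbar\log\Omega=\hat{\omega}_\infty$, prove uniform $C^0$, $\dot{\varphi}$, trace and higher-order estimates, extract the limit from the exponential decay of $\dot{\varphi}$, and prove uniqueness by the $\partial\ov{\partial}$-Lemma plus the maximum principle. The $C^0$ bound, the trace estimate, the Evans--Krylov bootstrap, the passage to the limit equation, and the uniqueness argument are all exactly as in the text and are correct as you state them.

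There is, however, one step that fails as written: the claim that the maximum principle applied directly to
$$\left( \ddt{} - \Delta \right) \dot{\varphi} = -\dot{\varphi} + \tr{\omega}{(\hat{\omega} - \hat{\omega}_t)}$$
gives a uniform bound on $\dot{\varphi}$. At a spacetime maximum of $\dot{\varphi}$ attained at $t_0>0$ you only obtain $\dot{\varphi}\le \tr{\omega}{(\hat{\omega}-\hat{\omega}_{t_0})}=e^{-t_0}\tr{\omega}{(\hat{\omega}-\omega_0)}$, and the right-hand side is a trace with respect to the \emph{evolving} metric of a fixed form with no sign; at this stage of the argument you have no lower bound on $\omega(t)$, so this trace is not controlled. (You cannot invoke the uniform equivalence $C^{-1}\hat{\omega}\le\omega\le C\hat{\omega}$ here, because your trace estimate itself uses the $\dot{\varphi}$ bound through the term $-A\dot{\varphi}$, so the logic would be circular.) The fix is the one used in Lemma \ref{c1lem}(ii): combine $\dot{\varphi}$ with $\varphi+nt$, whose heat operator produces the compensating term $-n+\tr{\omega}{\hat{\omega}_t}$, and apply the maximum principle to $(e^t-1)\dot{\varphi}-\varphi-nt$ and $(e^t+A)\dot{\varphi}+A\varphi+Ant$ (with $A\hat{\omega}_\infty\ge\omega_0$), whose evolutions are respectively $-\tr{\omega}{\omega_0}<0$ and $\tr{\omega}{(-\omega_0+(1+A)\hat{\omega}_\infty)}>0$. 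This yields $|\dot{\varphi}|\le C(t+1)e^{-t}$ at once, after which your exponential-weight argument for $e^{\beta t}\dot{\varphi}$ becomes superfluous. With this repair the rest of your proof goes through unchanged.
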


The existence of a solution to (\ref{keeqn}) was proved independently by Yau \cite{Y2} and Aubin \cite{A} in the 1970s, and the uniqueness was shown earlier by Calabi \cite{C0}.  H.-D. Cao \cite{Cao} proved that the K\"ahler-Ricci flow starting at any K\"ahler metric $\omega_0$ in the class $-c_1(M)$ converges to a solution of (\ref{keeqn}), giving a parabolic proof of the result of Yau and Aubin.  The theorem we state here is slightly more general than Cao's result, since we allow $\omega_0$ to lie in an arbitrary K\"ahler class.  Theorem \ref{thmcao} follows from the work of Tsuji \cite{Ts1} and Tian-Zhang  \cite{TZha} who dealt more generally with  K\"ahler manifolds with $-c_1(M)$ being positive in a weaker sense (more precisely, when $M$ has big and nef canonical bundle, also known as a smooth minimal model of general type).

As in the previous section, we reduce the flow equation (\ref{nkrf}) to a parabolic complex Monge-Amp\`ere equation.  We first need to pick reference K\"ahler metrics in the cohomology class 
$[\omega(t)]$. 
Since $c_1(M)<0$ there exists a K\"ahler metric $\hat{\omega}_{\infty}$ in $-c_1(M)$, which we fix once and for all.  Define
$$\hat{\omega}_t = e^{-t} \omega_0 + (1-e^{-t})\hat{\omega}_{\infty},$$
which clearly lies in $[\omega(t)]$ given by (\ref{kc}).  Next, we fix a volume form $\Omega$ satisfying
\begin{equation} \label{Omega2}
\ddbar \log \Omega = \hat{\omega}_{\infty}, \quad \int_M \Omega = \int_M \omega_0^n.
\end{equation}

\begin{exer} \label{ex41}
Show that there exists a volume form $\Omega$ satisfying (\ref{Omega2}).
\end{exer}

We can then rewrite the normalized K\"ahler-Ricci flow as the parabolic complex Monge-Amp\`ere equation:

\begin{equation} \label{nma}
\ddt{\varphi} = \log \frac{(\hat{\omega}_t + \ddbar \varphi)^n}{\Omega} - \varphi, \quad \hat{\omega}_t + \ddbar \varphi>0, \quad \varphi|_{t=0} =0,
\end{equation}

\begin{exer} \label{ex42}
Show that (\ref{nkrf}) is equivalent to (\ref{nma}).
\end{exer}

From now on let $\varphi(t)$ solve (\ref{nma}).  We wish to prove estimates for $\varphi$ which are independent of $t$.  

\begin{prop} \label{pestimates2} Let $\varphi=\varphi(t)$ solve (\ref{nma}) for $t\in [0,\infty)$.  Then for each $k=0,1,2,\ldots$ there exists a positive constant $A_k$ such that on $[0,\infty)$,
$$\| \varphi \|_{C^k(M)} \le A_k, \quad \hat{\omega}_t + \ddbar \varphi \ge \frac{1}{A_0} \omega_0.$$
\end{prop}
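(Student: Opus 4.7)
The plan is to follow the same four-step template as in the proof of Proposition \ref{pestimates}: first bound $\varphi$, then $\dot\varphi$, then the evolving metric $\omega$, and finally invoke parabolic Evans--Krylov plus bootstrap for the higher derivatives. The essential new feature is that all bounds must be uniform on the \emph{infinite} time interval $[0,\infty)$. This is precisely where the extra $-\varphi$ damping term in (\ref{nma}) and the exponential convergence $\hat\omega_t \to \hat\omega_\infty \in -c_1(M)$ will play a decisive role.

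For the $C^0$ estimate, I would apply the maximum principle directly to $\varphi$. At a space-time maximum $(x_0,t_0)$ with $t_0>0$ one has $\ddbar\varphi \le 0$ and $\ddt\varphi \ge 0$, so (\ref{nma}) gives
$$\varphi(x_0,t_0) \le \log \frac{\hat\omega_{t_0}^n}{\Omega} \le \sup_{M\times[0,\infty)} \log \frac{\hat\omega_t^n}{\Omega},$$
which is finite since $\hat\omega_t = e^{-t}\omega_0 + (1-e^{-t})\hat\omega_\infty$ is a uniformly bounded family of K\"ahler metrics on $[0,\infty)$. Combined with $\varphi|_{t=0}=0$ this yields a uniform upper bound, and the symmetric argument at a minimum gives the lower bound. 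Note how the damping term $-\varphi$ makes this step actually \emph{easier} than in the unnormalized setting of Proposition \ref{pestimates}: it converts the argument into a genuine extremum principle rather than a Grönwall-type inequality growing in $t$.

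For $\dot\varphi$, differentiating (\ref{nma}) in $t$ and using $\tr{\omega}{}$ for the trace as in Exercise \ref{extr} gives
$$\ddt \dot\varphi = \Delta \dot\varphi + \tr{\omega}{\left(\ddt \hat\omega_t\right)} - \dot\varphi,$$
with $\ddt \hat\omega_t = e^{-t}(\hat\omega_\infty - \omega_0)$. I would apply the maximum principle to an auxiliary quantity of the form $Q=\dot\varphi + A\varphi$ (or, to sharpen the decay, $Q = e^t\dot\varphi + $ correction), engineered so that the $-\dot\varphi$ damping absorbs the $\tr{\omega}{(\ddt\hat\omega_t)}$ error term uniformly in $t$; alternatively, one can run the maximum principle on $\dot\varphi$ itself using the fact that $\ddt\hat\omega_t \to 0$ exponentially. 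This gives $|\dot\varphi|\le C$ uniformly, and hence $C^{-1}\Omega \le \omega^n \le C\Omega$ directly from (\ref{nma}) together with the $C^0$ bound on $\varphi$.

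For the metric estimate, I would repeat the computation of Lemma \ref{lemtr} with $\omega(t)$ now solving (\ref{nkrf}) instead of (\ref{krf}). The identities (\ref{calc1}) and (\ref{daggerex}) are local and only require $\omega$ to be K\"ahler, so they carry over verbatim; the extra term coming from the $-\omega$ contribution in $\partial_t \omega$ produces only a bounded correction to the evolution of $\log \tr{\omega_0}\omega$. Applying the maximum principle to $Q = \log \tr{\omega_0}\omega - A\varphi$ with $A$ large (but uniform, depending only on the curvature of $g_0$ and the sup of $\tr{\omega}{\hat\omega_t}$, itself bounded via the volume estimate) gives $\tr{\omega_0}\omega \le C$ uniformly on $[0,\infty)$, and then Exercise \ref{exeq} yields the two-sided bound $C^{-1}\omega_0 \le \omega \le C\omega_0$. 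With this in hand, the higher-order estimates follow exactly as in Section \ref{secthigher}: parabolic Evans--Krylov gives $C^{2+\alpha,1+\alpha/2}$, and a bootstrap of the linearized equation gives uniform bounds in all $C^k$. The main obstacle I anticipate is Step 2 --- choosing the right test function so that the damping built into (\ref{nma}) genuinely beats the error coming from the time-varying reference class; once this is done, Steps 3 and 4 are essentially parallel to their finite-time counterparts in Chapter 3.
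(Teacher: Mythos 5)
Your proposal is correct and follows essentially the same route as the paper: the $C^0$ bound by the direct maximum principle exploiting the $-\varphi$ damping, the $\dot{\varphi}$ bound by the maximum principle applied to combinations involving $e^t\dot{\varphi}$, the metric bound via $Q=\log \tr{\omega_0}{\omega}-A\varphi$, and parabolic Evans--Krylov plus bootstrap for the higher orders. One caution on the step you rightly flag as delicate: of the test functions you float for $\dot{\varphi}$, only the $e^t\dot{\varphi}$ variant (the paper uses $(e^t-1)\dot{\varphi}-\varphi-nt$ and $(e^t+A)\dot{\varphi}+A\varphi+Ant$) actually closes, because $\tr{\omega}{\left(\ddt{}\hat{\omega}_t\right)}=e^{-t}\tr{\omega}{(\hat{\omega}_{\infty}-\omega_0)}$ involves traces that are not a priori controlled before the metric estimate, so neither $\dot{\varphi}+A\varphi$ alone nor the maximum principle applied to $\dot{\varphi}$ itself suffices.
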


In the same way that we proved Proposition \ref{pestimates}, we begin 
by establishing estimates on $\varphi$ and $\dot{\varphi}$. In this case we can prove stronger estimates which improve as $t\rightarrow \infty$.

\begin{lem}  \label{c1lem} There exists a uniform constant $C>0$ such that on $M \times [0,\infty)$, 
\begin{enumerate}
\item[(i)] $\displaystyle{| \varphi(t) | \le C}$.
\item[(ii)] $\displaystyle{ | \dot{\varphi}(t)| \le C(t+1) e^{-t}}$.
\item[(iii)] There exists a continuous function $\varphi_{\infty}$ on $M$ such that
$$| \varphi(t) - \varphi_{\infty}| \le Ce^{-t/2}.$$
\item[(iv)] $\displaystyle{C^{-1} \Omega \le \omega^n \le C \Omega}.$
\end{enumerate}
\end{lem}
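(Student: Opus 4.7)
The plan is to establish the four parts in the order (i), (iv), (ii), (iii), using the maximum principle throughout, together with the arithmetic--geometric mean inequality to convert a trace inequality into a volume-form inequality. A recurring input is that the reference family $\hat{\omega}_t = e^{-t}\omega_0 + (1-e^{-t})\hat{\omega}_\infty$ consists of K\"ahler metrics uniformly bounded between two fixed positive metrics on $[0,\infty)$. For (i), I will apply the maximum principle directly to $\varphi$: at an interior maximum on $M \times [0,a]$, $\omega = \hat{\omega}_t + \ddbar\varphi \le \hat{\omega}_t$ and $\dot\varphi \ge 0$, so (\ref{nma}) forces $\varphi \le \log(\hat{\omega}_t^n/\Omega) \le C$; at $t_0 = 0$, $\varphi = 0$. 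The lower bound is symmetric, and (i) follows uniformly on $M\times[0,\infty)$.

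For (iv), equivalent to $|\dot\varphi| \le C$ via $\log(\omega^n/\Omega) = \dot\varphi + \varphi$, I argue in two directions. Using $(\ddt{} - \Delta)\dot\varphi = -\dot\varphi + e^{-t}\tr{\omega}{(\hat\omega_\infty - \omega_0)}$ and $(\ddt{} - \Delta)\varphi = \dot\varphi - n + \tr{\omega}{\hat\omega_t}$, addition gives
\[
\left(\ddt{} - \Delta\right)(\dot\varphi + \varphi) = \tr{\omega}{\hat\omega_\infty} - n.
\]
At an interior minimum of $\dot\varphi + \varphi$, the arithmetic--geometric mean inequality $\tr{\omega}{\hat\omega_\infty} \ge n (\hat\omega_\infty^n/\omega^n)^{1/n}$ forces $\omega^n \ge \hat\omega_\infty^n$, so $\dot\varphi + \varphi \ge \log(\hat\omega_\infty^n/\Omega) \ge -C$, which combined with (i) yields $\dot\varphi \ge -C$. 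For the upper bound, I will apply the maximum principle to $\dot\varphi - A\varphi$ with $A$ chosen large enough that $B(t) := A\hat\omega_t - e^{-t}(\hat\omega_\infty - \omega_0)$ is positive definite on $[0,\infty)$ (such an $A$ exists since $\hat\omega_t,\omega_0,\hat\omega_\infty$ are fixed and mutually comparable). Then
\[
\left(\ddt{} - \Delta\right)(\dot\varphi - A\varphi) = -(A+1)\dot\varphi + An - \tr{\omega}{B(t)},
\]
and at an interior maximum the inequality $-(A+1)\dot\varphi + An \ge \tr{\omega}{B(t)} \ge 0$ gives $\dot\varphi \le An/(A+1) < n$; hence $\dot\varphi - A\varphi \le n + AC$ everywhere, and (i) yields $\dot\varphi \le C$. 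Combining both directions, $|\dot\varphi|\le C$, and (iv) is immediate.

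The main obstacle will be the exponential decay (ii). Differentiating (\ref{nma}) and rearranging yields
\[
\left(\ddt{} - \Delta\right)(e^t\dot\varphi) = \tr{\omega}{(\hat\omega_\infty - \omega_0)},
\]
so once the right-hand side is uniformly bounded the maximum principle on $M\times[0,\tau]$ gives $\max_M |e^t\dot\varphi|(\tau) \le \max_M|\dot\varphi|(0) + C\tau$, which is exactly (ii). Uniform control on this trace requires two-sided metric bounds $c\omega_0 \le \omega \le C\omega_0$, which I plan to establish by a parabolic Yau--Aubin--Cao trace estimate mirroring the proof of Lemma \ref{lemtr}: compute $(\ddt{} - \Delta)\log\tr{\omega_0}{\omega}$, absorb the $g_0$-curvature contribution by subtracting $A\varphi$, and apply the maximum principle using the bounds on $\varphi$ and $\dot\varphi$ from (i) and (iv). The lower bound on $\omega$ then follows from the upper bound together with (iv) via Exercise~\ref{exeq}. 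The normalization term $-\omega$ in (\ref{nkrf}) produces only lower-order modifications compared with Lemma \ref{lemtr}, which are controlled by (i) and (iv).

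Finally, (iii) follows by integrating (ii): for $s \le t$,
\[
|\varphi(t) - \varphi(s)| \le \int_s^t |\dot\varphi|(\tau)\, d\tau \le C\int_s^t (\tau + 1)e^{-\tau}\, d\tau,
\]
and since $\int_s^\infty (\tau + 1)e^{-\tau}\, d\tau \le Ce^{-s/2}$ for $s$ large, the family $\{\varphi(t)\}$ is Cauchy in $C^0(M)$ and converges uniformly to a continuous $\varphi_\infty$ at the claimed rate $|\varphi(t) - \varphi_\infty| \le Ce^{-t/2}$.
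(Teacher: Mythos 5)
Your proposal is correct --- every computation checks out --- but it reaches part (ii), the heart of the lemma, by a genuinely different route than the paper. The paper (following Tian--Zhang) never needs the uniform metric equivalence $C^{-1}\omega_0 \le \omega \le C\omega_0$ to get the decay of $\dot{\varphi}$. Instead it forms the two combinations $(e^t-1)\dot{\varphi}-\varphi-nt$ and $(e^t+A)\dot{\varphi}+A\varphi+Ant$, whose images under $\ddt{}-\Delta$ are $-\tr{\omega}{\omega_0}<0$ and $\tr{\omega}{(-\omega_0+(1+A)\hat{\omega}_{\infty})}>0$ respectively (with $A\hat{\omega}_{\infty}\ge \omega_0$); the sign alone lets the maximum and minimum principles run with no quantitative control on any trace term, and the bounds $\pm C(t+1)e^{-t}$ drop out after dividing by $e^t-1$ resp. $e^t+A$. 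You instead first extract the non-decaying bound $|\dot{\varphi}|\le C$ (your AM--GM argument at the minimum of $\dot{\varphi}+\varphi$ and the barrier $\dot{\varphi}-A\varphi$ are both fine), then import the trace estimate of Lemma \ref{lemtr2} to bound $\tr{\omega}{(\hat{\omega}_{\infty}-\omega_0)}$ uniformly, and only then integrate $(\ddt{}-\Delta)(e^t\dot{\varphi})=\tr{\omega}{(\hat{\omega}_{\infty}-\omega_0)}$ against the maximum principle. This is not circular --- the paper's proof of Lemma \ref{lemtr2} uses only the boundedness of $\dot{\varphi}$, which you have --- but it does force a reordering of the section: the second-order estimate must now be established in the middle of the zeroth/first-order lemma. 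The trade-off is that the paper's argument is shorter and keeps the estimates in their usual hierarchy, while yours makes the source of the $e^{-t}$ decay more transparent (it is exactly the decay of $\ddt{}\hat{\omega}_t$, once the trace is under control) and produces the useful intermediate statement $|\dot{\varphi}|\le C$ by a self-contained argument. Two small points worth making explicit if you write this up: the constant in your maximum-principle step $\sup_M|e^t\dot{\varphi}|(\tau)\le\sup_M|\dot{\varphi}(0)|+C\tau$ must be checked to be independent of the endpoint $a$ of the interval $[0,a]$ on which you apply the principle (it is), and at $t_0=0$ the quantities $\dot{\varphi}(0)=\log(\omega_0^n/\Omega)$ and $\varphi(0)=0$ supply the base-case bounds.
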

\begin{proof}
Part (i) is left as an exercise.
For (ii), we use an argument of Tian-Zhang \cite{TZha}.  Compute
\[
\begin{split}
\left( \ddt{} - \Delta \right) \dot{\varphi} = {} & \tr{\omega}{\left( \ddt{} \hat{\omega}_t \right)} - 
\dot{\varphi} = \tr{\omega}{( -e^{-t} \omega_0 + e^{-t} \hat{\omega}_{\infty} ) - \dot{\varphi}}\\
\left( \ddt{}  - \Delta \right) \varphi = {} & \dot{\varphi} - \tr{\omega}{(\omega- \hat{\omega}_t)} = \dot{\varphi} - n + \tr{\omega}{(e^{-t}\omega_0 + (1-e^{-t}) \hat{\omega}_{\infty})}.
\end{split}
\]
Then we have
\begin{equation} \label{lem1}
\left( \ddt{} - \Delta \right) ( e^t \dot{\varphi}) =  \tr{\omega}{(-\omega_0 +\hat{\omega}_{\infty})} 
\end{equation}
and
\begin{equation} \label{lem2}
\left( \ddt{} - \Delta \right) (\dot{\varphi} + \varphi + nt) =  \tr{\omega}{\hat{\omega}_{\infty}}.
\end{equation}
Subtracting (\ref{lem2}) from (\ref{lem1}), we obtain
$$\left( \ddt{} - \Delta \right) \left( ( e^t - 1)\dot{\varphi} - \varphi -nt \right) = - \tr{\omega}{\omega_0} <0.$$
It then follows from the maximum principle that
$$( e^t - 1)\dot{\varphi} - \varphi -nt  \le 0,$$ 
and since $\varphi$ is bounded by (i), this gives the upper bound $\dot{\varphi} \le C(t+1)e^{-t}$.

For the lower bound of $\dot{\varphi}$ we add a large multiple of (\ref{lem2}) to (\ref{lem1}),
$$\left( \ddt{} - \Delta \right) \left( (e^t +A) \dot{\varphi} + A\varphi + Ant\right) = \tr{\omega}{(-\omega_0 +\hat{\omega}_{\infty} +A\hat{\omega}_{\infty})}>0$$
where $A$ is a constant chosen so that $A \hat{\omega}_{\infty} \ge \omega_0$.  Then 
$$\dot{\varphi} \ge - C(1+t) e^{-t}$$
follows from the minimum principle.

For (iii), compute for $s>t$ and $x \in M$,
\begin{equation} \label{pi}
\begin{split}
| \varphi(x,s) - \varphi(x,t)| =  {}&  \left| \int_t^s \dot{\varphi}(x,u)du \right| 
\le  \int_t^s |\dot{\varphi}(x,u) |du \\ 
\le {} & C \int_t^s  e^{-u/2} du = 2C(e^{-t/2} - e^{-s/2}),
\end{split}
\end{equation}
since from (ii) we have $|\dot{\varphi}|\le Ce^{-t/2}$.  Then from (\ref{pi}), $\varphi(t)$ converges uniformly to a continuous function $\varphi_{\infty}$.  Taking the limit in (\ref{pi}) as $s\rightarrow \infty$ gives (iii).

Part (iv) follows from (i) and (ii).
\end{proof}

\begin{exer} \label{ex43}
Prove part (i) of Lemma \ref{c1lem}.
\end{exer}

Next we prove a bound on the metric.

\begin{lem}  \label{lemtr2} There exists a uniform constant $C$ such that on $M\times[0,\infty)$,
$$C^{-1} \omega_0 \le \omega(t) \le C \omega_0.$$
\end{lem}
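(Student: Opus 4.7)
The plan is to mimic the proof of Lemma \ref{lemtr} from the unnormalized setting, adapting the argument to account for the extra $-\omega$ term in the normalized flow (\ref{nkrf}). The previously established bounds will do most of the work: $|\varphi| \le C$ and $|\dot\varphi| \le C(t+1)e^{-t}$ from Lemma \ref{c1lem}(i)--(ii), together with the volume bound $C^{-1}\Omega \le \omega^n \le C\Omega$ from Lemma \ref{c1lem}(iv).

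First, I would redo the key calculation of $(\ddt{} - \Delta)\log \tr{\omega_0}{\omega}$. The spatial computation from Lemma \ref{lemtr} is unchanged; only the time derivative $\ddt{}\log \tr{\omega_0}\omega = \frac{1}{\tr{\omega_0}\omega}\tr{\omega_0}{(\ddt{}\omega)}$ picks up an extra contribution. Since $\ddt{}\omega = -\text{Ric}(\omega) - \omega$, we get
\[
\left(\ddt{} - \Delta\right) \log \tr{\omega_0}{\omega} \le -\frac{1}{\tr{\omega_0}{\omega}} g^{i\ov{j}} R_{\,i\ov{j}}^{0\ \,\ov{\ell}k}g_{k\ov{\ell}} - 1,
\]
using that the $(\dagger)$ term is nonpositive (Exercise \ref{ex35}). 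Bounding the first term by $C_0 \tr{\omega}{\omega_0}$ as in Lemma \ref{lemtr}, this yields the familiar estimate, up to the harmless $-1$.

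Next I would consider $Q = \log \tr{\omega_0}{\omega} - A\varphi$ for a large constant $A$. Using the evolution equation (\ref{nma}) rewritten as $\Delta\varphi = n - \tr{\omega}{\hat\omega_t}$, I compute
\[
\left(\ddt{} - \Delta\right) Q \le C_0 \tr{\omega}{\omega_0} - A\dot\varphi + A(n - \tr{\omega}{\hat\omega_t}) + C.
\]
The key point is that $\hat\omega_t = e^{-t}\omega_0 + (1-e^{-t})\hat\omega_\infty$ is bounded uniformly below by a positive multiple of $\omega_0$ (since $\hat\omega_\infty$ is a fixed Kähler metric, so $\hat\omega_\infty \ge c\omega_0$ for some $c>0$, and then $\hat\omega_t \ge \min(1,c)\omega_0$ for all $t \ge 0$). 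So we may choose $A$ large enough, independent of $t$, that $A\hat\omega_t \ge (C_0+1)\omega_0$. Combined with the uniform bound on $\dot\varphi$, this gives $(\ddt{} - \Delta)Q \le -\tr{\omega}{\omega_0} + C'$.

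Applying the maximum principle to $Q$ on $M \times [0,a]$ for arbitrary $a>0$: at an interior maximum $(x_0,t_0)$ we get $\tr{\omega}{\omega_0}(x_0,t_0) \le C'$. Since the volume form bound from Lemma \ref{c1lem}(iv) holds uniformly, Exercise \ref{exeq} converts this to $\tr{\omega_0}{\omega}(x_0,t_0) \le C$. Using the uniform bound on $\varphi$, we conclude $Q \le C$ on all of $M\times[0,\infty)$, hence $\tr{\omega_0}{\omega}\le C$ uniformly. The matching lower bound $\omega \ge C^{-1}\omega_0$ then follows from this upper bound combined with the volume form estimate via Exercise \ref{exeq}. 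The main potential obstacle is verifying the uniform positivity of the reference family $\hat\omega_t$ in $t$, which is what makes the choice of $A$ uniform; fortunately this is immediate from the convex-combination formula defining $\hat\omega_t$.
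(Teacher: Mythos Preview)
Your proof is correct and follows essentially the same route as the paper's: the same auxiliary quantity $Q=\log\tr{\omega_0}{\omega}-A\varphi$, the same key inequality $(\ddt{}-\Delta)\log\tr{\omega_0}{\omega}\le C_0\tr{\omega}{\omega_0}-1$, the same choice of $A$ via the uniform lower bound on $\hat\omega_t$, and the same conclusion through Exercise~\ref{exeq} and the volume bound. One tiny quibble: the identity $\Delta\varphi=n-\tr{\omega}{\hat\omega_t}$ comes directly from $\omega=\hat\omega_t+\sqrt{-1}\partial\bar\partial\varphi$, not from the evolution equation (\ref{nma}) itself.
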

\begin{proof}
The proof is similar to that of Lemma \ref{lemtr}, so we will be brief. First, to bound the trace of $\omega$ with respect to $\omega_0$, we claim that
\begin{equation} \label{c1ex}
\left( \ddt{} - \Delta \right) \log \tr{\omega_0}{\omega} \le C_0 \tr{\omega}{\omega_0} -1,
\end{equation}
for a uniform $C_0$.  This calculation is left as an exercise. 

Now define $Q= \log \tr{\omega_0}{\omega} -A \varphi$, for $A$ to be determined.  Calculate
$$\left( \ddt{} - \Delta \right) Q \le C_0 \tr{\omega}{\omega_0} - 1 - A \dot{\varphi} + A \tr{\omega}{(\omega- \hat{\omega}_t)}.$$
Choose $A$ sufficiently large so that $A\hat{\omega}_t \ge (C_0+1) \omega_0$, which we can do so since the metrics $\hat{\omega}_t$ are uniformly bounded as $t \rightarrow \infty$.  Hence
$$\tr{\omega}{\omega_0} \le C$$
at the maximum of $Q$ (if occuring at $t_0>0$), using the fact $\dot{\varphi}$ is bounded by part (ii) of Lemma \ref{c1lem}. 
 From 
Exercise \ref{exeq} and Lemma \ref{c1lem} again, we see that $\tr{\omega_0}{\omega}$ and hence $Q$ is bounded from above at the maximum of $Q$.  The lemma follows by using Exercise \ref{exeq} once more.
\end{proof}

\begin{exer} \label{ex44}
Prove the inequality (\ref{c1ex}).
\end{exer}

We now have everything we need to obtain the $C^{\infty}$ estimates:

\begin{proof}[Proof of Proposition \ref{pestimates2}]  
The argument follows in exactly the same way as in Section \ref{secthigher}.  The only difference is that we are dealing with the normalized K\"ahler-Ricci flow, but this only adds a harmless term.
\end{proof}

To finish the proof of Theorem \ref{thmcao}, we need to prove convergence to a unique K\"ahler-Einstein metric.  From part (iii) of Lemma \ref{c1lem}:
$$| \varphi(t) - \varphi_{\infty} | \le Ce^{-t/2},$$
we know that $\varphi(t)$ converges uniformly exponentially fast to $\varphi_{\infty}$.
But   since we have $C^{\infty}$ estimates from Proposition \ref{pestimates2} we can apply the Exercise \ref{ex45} to see that $\varphi(t)$ converges to $\varphi_{\infty}$ in $C^{\infty}$, and in particular $\varphi_{\infty}$ is smooth.

Next, we apply part (ii) of Lemma \ref{c1lem}:
$$| \dot{\varphi}| \le C(t+1)e^{-t},$$
to see that $\dot{\varphi}$ converges in $C^{\infty}$ to $0$.  Then taking the limit as $t \rightarrow \infty$ of (\ref{nma}) we obtain
$$0 = \log \frac{(\hat{\omega}_{\infty} + \ddbar \varphi_{\infty})^n}{\Omega}- \varphi_{\infty}.$$
Taking $\ddbar$ of both sides of this equation and recalling (\ref{Omega2}), we have
$$\textrm{Ric}(\hat{\omega}_{\infty} + \ddbar \varphi_{\infty}) + \hat{\omega}_{\infty} + \ddbar \varphi_{\infty} =0,$$
or, in other words,
$\hat{\omega}_{\infty} + \ddbar \varphi$ satisfies the K\"ahler-Einstein equation (\ref{keeqn}).

It remains to prove the uniqueness of solutions to the K\"ahler-Einstein equation.  Suppose that $\oke$ and $\oke'$ are two solutions of (\ref{keeqn}).  Then $\oke, \oke'$ both lie in $-c_1(M)$ and so by the $\partial \ov{\partial}$ Lemma we can write
$$\oke' = \oke + \ddbar \psi$$
for some function $\psi$.  We have
$$\textrm{Ric}(\oke') = -\oke' = -\oke - \ddbar \psi = \textrm{Ric}(\oke) - \ddbar \psi,$$
and hence
$$- \ddbar \log \frac{\oke'^n}{\oke^n} = -\ddbar \psi.$$
Applying Exercise \ref{exerddbar}, we have
$$\log \frac{( \oke + \ddbar \psi)^n}{\oke^n} = \psi + C,$$
for some constant $C$.  We now apply the maximum principle to $\psi$ (a simpler version of the maximum principle, where the function has no dependence on $t$).  At the maximum of $\psi+C$, we have $\ddbar \psi \le 0$ and so at this point $\psi+C \le 0$.   By considering similarly the minimum of $\psi+C$ we obtain $\psi+C \ge 0$ and hence $\psi$ is constant and
 $\oke=\oke'$.
 
 This completes the proof of Theorem \ref{thmcao}.

\section{Zero first Chern class}

We now consider the case when $c_1(M)=0$.   Fix any K\"ahler metric $\omega_0$.  We have
$$T = \sup \{ t>0 \ | \ [\omega_0] - tc_1(M) >0 \} =\infty,$$
and so a solution to the K\"ahler-Ricci flow exists for all time.  The K\"ahler class $[\omega(t)]$ does not move, and so unlike the case of $c_1(M)<0$  there is no need to rescale the flow.

The behavior of the K\"ahler-Ricci flow is given by the following theorem:

\begin{thm} \label{thmcao2}
Suppose $c_1(M)=0$.  Then starting at any K\"ahler metric $\omega_0$, the K\"ahler-Ricci flow (\ref{krf}) exists for all time and converges in $C^{\infty}$ to a K\"ahler-Einstein metric $\omega_{\emph{KE}}$ satisfying
\begin{equation} \label{krflat}
\emph{Ric}(\omega_{\emph{KE}}) =0.
\end{equation}
Moreover, $\omega_{\emph{KE}}$ is the unique K\"ahler metric in $[\omega_0]$ satisfying (\ref{krflat}).
\end{thm}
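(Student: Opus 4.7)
The strategy mirrors the proof of Theorem \ref{thmcao}, but with two important simplifications and one serious new difficulty. Since $c_1(M) = 0$, we may fix once and for all a smooth volume form $\Omega$ with $\ddbar \log \Omega = 0$ and $\int_M \Omega = \int_M \omega_0^n$ (the existence of such an $\Omega$ follows because $-\mathrm{Ric}(\omega_0) \in c_1(M) = 0$, so $\mathrm{Ric}(\omega_0) = \ddbar f$ for some $f$, and we take $\Omega = e^{-f+c} \omega_0^n$). Because the cohomology class $[\omega(t)] = [\omega_0]$ does not move, the natural reference metric is simply $\omega_0$ itself, and the K\"ahler--Ricci flow reduces (by the same argument as in Exercise \ref{ex31}) to the time-autonomous parabolic complex Monge--Amp\`ere equation
\[
\ddt{\varphi} = \log \frac{(\omega_0 + \ddbar \varphi)^n}{\Omega}, \quad \omega_0 + \ddbar \varphi > 0, \quad \varphi|_{t=0} = 0,
\]
and we wish to prove $C^\infty$ estimates for $\varphi(t)$, up to an additive time-dependent constant, on $[0,\infty)$.

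First I would establish the analogue of Proposition \ref{pestimates2}. The estimate on $\dot\varphi$ is now cleaner than in either of the previous cases: differentiating the Monge--Amp\`ere equation and using that $\omega_0$ is $t$-independent gives the honest heat equation
\[
\left(\ddt{} - \Delta\right) \dot\varphi = 0,
\]
so by the maximum principle $\sup_M \dot\varphi(t)$ is non-increasing and $\inf_M \dot\varphi(t)$ is non-decreasing; in particular $\|\dot\varphi\|_{C^0}$ is bounded by its initial value, which yields $C^{-1}\Omega \le \omega^n \le C\Omega$. Uniform boundedness of $\varphi$ itself requires a normalization since the equation is invariant under adding a constant to $\varphi$; I would replace $\varphi$ by $\tilde\varphi = \varphi - \frac{1}{V}\int_M \varphi \, \omega_0^n$ and apply Yau's Moser iteration / $L^\infty$ estimate for the elliptic complex Monge--Amp\`ere equation to this normalization (alternatively, adapt the maximum principle argument of Lemma \ref{lemmaphi}, exploiting that $\dot\varphi$ is already bounded). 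The bound on $\trgg$ (the trace of $\omega$ with respect to $\omega_0$) then follows verbatim from the proof of Lemma \ref{lemtr}, with $\hat\omega_t$ replaced by the static metric $\omega_0$, and the higher-order estimates are obtained by the same parabolic Evans--Krylov plus bootstrap argument as in Section \ref{secthigher}.

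The main obstacle is convergence: unlike the $c_1(M)<0$ case, the Monge--Amp\`ere equation contains no damping term forcing exponential decay, so the bound $\dot\varphi \in [\inf_M\dot\varphi(0),\sup_M\dot\varphi(0)]$ is not enough. The plan is to show $\dot\varphi \to 0$ in $C^\infty$ by proving that $\mathrm{osc}_M \dot\varphi(t) \to 0$, and then controlling the remaining constant. Since $\dot\varphi$ satisfies the heat equation with respect to $\omega(t)$ and the metrics $\omega(t)$ are uniformly equivalent to $\omega_0$, a parabolic Harnack inequality (or equivalently a Poincar\'e-type estimate combined with $\frac{d}{dt}\int_M \dot\varphi^2 \,\omega^n = -2\int_M |\partial \dot\varphi|_g^2\, \omega^n + (\text{lower order})$) gives exponential decay of the oscillation; meanwhile differentiation of $\int_M \omega^n = \int_M \omega_0^n$ together with $\omega^n = e^{\dot\varphi}\Omega$ pins down the mean value of $\dot\varphi$. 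Combined with the uniform $C^\infty$ bounds and Exercise \ref{ex45}, this yields $C^\infty$ convergence of $\tilde\varphi(t)$ to a smooth limit $\varphi_\infty$ and of $\dot\varphi(t)$ to $0$. Passing to the limit in the Monge--Amp\`ere equation shows $\log\tfrac{(\omega_0+\ddbar\varphi_\infty)^n}{\Omega}$ is constant, and taking $\ddbar$ together with $\ddbar\log\Omega=0$ gives $\mathrm{Ric}(\omega_\infty)=0$. Uniqueness within $[\omega_0]$ is a direct maximum principle argument identical to the one at the end of the proof of Theorem \ref{thmcao}: writing $\oke' = \oke + \ddbar\psi$ between two Ricci-flat representatives and using $\mathrm{Ric}(\oke')=\mathrm{Ric}(\oke)=0$ reduces to a homogeneous elliptic Monge--Amp\`ere equation, from which the maximum principle forces $\psi$ to be constant.
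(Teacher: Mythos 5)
Your overall architecture matches the paper's through the a priori estimates: the same reduction to $\ddt{\varphi} = \log\frac{(\omega_0+\ddbar\varphi)^n}{\Omega}$, the heat equation for $\dot\varphi$, Yau's $L^\infty$ estimate to control $\mathrm{osc}(\varphi)$, and the Evans--Krylov bootstrap. Your convergence mechanism, however, is genuinely different. The paper uses the Phong--Sturm functional $P(t)=\int_M\dot\varphi\,\omega^n$, shows $\frac{d}{dt}P=-\frac1n\int_M|\partial\dot\varphi|^2_g\,\omega^n\le 0$ and $\frac{d^2P}{dt^2}\ge C\frac{dP}{dt}$, and combines sequential convergence with uniqueness of the Ricci-flat metric to upgrade to full convergence. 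Your route via a parabolic Harnack inequality for $\dot\varphi$ (essentially Cao's original argument, which the paper deliberately replaces) is also valid and buys more: exponential decay of $\mathrm{osc}(\dot\varphi)$, hence $|\dot\varphi|\to 0$ exponentially after pinning the mean with $\int_M e^{\dot\varphi}\Omega=\int_M\Omega$, hence convergence of $\varphi$ itself without any appeal to uniqueness --- at the price of invoking Krylov--Safonov/Moser. Your parenthetical $L^2$-energy alternative is shakier: the ``lower order'' term in $\frac{d}{dt}\int_M\dot\varphi^2\,\omega^n$ is $-2\int_M\dot\varphi\,|\partial\dot\varphi|^2_g\,\omega^n$, which is of the same order as the dissipation and has no favorable sign unless $\dot\varphi$ is already known to be small.

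Two steps need repair. First, the trace bound does not follow ``verbatim'' from Lemma \ref{lemtr}: that proof uses the uniform bound on $|\varphi|$ twice (to bound $Q$ at its maximum point, and to pass from $Q\le C$ back to $\tr{\omega_0}{\omega}\le C$), whereas here only $\mathrm{osc}(\varphi)$ is controlled and a priori $|\varphi|$ could grow linearly in $t$. The paper flags exactly this complication and resolves it by showing, via Jensen's inequality, that $t\mapsto\frac1V\int_M\varphi\,\Omega$ is non-increasing, so the factor $\exp\left(A\left(\varphi(x,t)-\inf_{M\times[0,t]}\varphi\right)\right)$ is controlled by the oscillation alone. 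Your own normalization $\tilde\varphi$ would also do the job if you run the maximum principle on $Q=\log\tr{\omega_0}{\omega}-A\tilde\varphi$ (the extra term $\frac1V\int_M\dot\varphi\,\Omega$ is bounded), but as written the step is a gap. Second, the uniqueness argument is not ``identical to the one at the end of the proof of Theorem \ref{thmcao}.'' There the reduction gives $\log\frac{(\oke+\ddbar\psi)^n}{\oke^n}=\psi+C$, and evaluating at the maximum and minimum of $\psi$ forces $\psi+C\equiv 0$. Here the right-hand side is a bare constant, so the pointwise maximum principle only yields $C=0$, i.e.\ the homogeneous equation $(\oke+\ddbar\psi)^n=\oke^n$, from which the weak maximum principle does not conclude that $\psi$ is constant. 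You need either the strong maximum principle applied to the linearized elliptic equation, or the integration-by-parts argument the paper actually uses: $0=\int_M\psi\,(\oke^n-\oke'^n)\ge\frac1n\int_M|\partial\psi|^2_{\oke}\,\oke^n$.
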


The existence of a K\"ahler metric $\oke$ in each K\"ahler class satisfying (\ref{krflat}) is due to Yau, and the uniqueness part was already established by Calabi \cite{C0}.  H.-D. Cao \cite{Cao} proved Theorem \ref{thmcao2}, making use of Yau's $L^{\infty}$ estimate for the complex Monge-Amp\`ere equation \cite{Y2}:

\begin{thm} \label{thmyau}
Let $(M, \omega_0)$ be a compact K\"ahler manifold and let $F:M \rightarrow \mathbb{R}$ be a smooth function.  Suppose that $\theta$ satisfies the complex Monge-Amp\`ere equation
$$(\omega_0 + \ddbar \theta)^n = e^F \omega_0^n, \quad \omega_0 + \ddbar \theta>0.$$
Then 
$$\emph{osc}(\theta) := \sup_M \theta - \inf_M \theta \le C,$$
for a constant depending only on $(M, \omega_0)$ and $\sup_M F$.
\end{thm}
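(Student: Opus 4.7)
The plan is to follow Yau's original Moser iteration argument. After replacing $\theta$ by $\theta - \sup_M\theta$ (the equation is invariant since only $\ddbar\theta$ appears), I may assume $\sup_M\theta = 0$, so that $\mathrm{osc}(\theta) = -\inf_M\theta$ and my task is to bound $-\inf_M\theta$ by a constant depending on $(M,\omega_0)$ and $\sup_M F$. The overall scheme is: first extract an $L^1$-bound on $\theta$ using only the $\omega_0$-plurisubharmonicity; then integrate the equation against powers of $-\theta$ to obtain a differential inequality for $\|\partial(-\theta)^{(p+2)/2}\|_{L^2}$; then combine with Sobolev and iterate up to $L^\infty$.

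For the $L^1$-step, $\omega_0+\ddbar\theta \ge 0$ gives, on taking the $\omega_0$-trace, $\Delta_{g_0}\theta \ge -n$. Let $G$ be the Green's function for $\Delta_{g_0}$, normalized by $\int_M G(x,\cdot)\,\omega_0^n = 0$ and translated so that $G+K \ge 0$ for a uniform $K = K(M,\omega_0)$. The representation formula
\begin{equation*}
\theta(x) = \overline{\theta} + \int_M G(x,y)\,\Delta_{g_0}\theta(y)\,\omega_0^n(y),
\end{equation*}
evaluated at a point $x_0$ where $\theta(x_0) = 0$, combined with $\int_M \Delta_{g_0}\theta\,\omega_0^n = 0$ and the non-negativity of $(G+K)(\Delta_{g_0}\theta + n)$, forces $\overline{\theta} \ge -C$. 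Since $\theta \le 0$, this yields $\int_M(-\theta)\,\omega_0^n = -\overline{\theta}\,\vol(M) \le C$.

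For the Moser iteration, the key identity is the telescoping
\begin{equation*}
\omega^n - \omega_0^n = \sqrt{-1}\partial\ov\partial\theta\wedge T, \qquad T := \sum_{k=0}^{n-1}\omega^k\wedge\omega_0^{n-1-k},
\end{equation*}
which together with $\omega^n = e^F\omega_0^n$ gives $\sqrt{-1}\partial\ov\partial\theta\wedge T = (e^F-1)\,\omega_0^n$. For $p\ge 0$, multiplying by $(-\theta)^{p+1}$, integrating over $M$, and integrating by parts (legitimate since $T$ is $d$-closed) produces
\begin{equation*}
(p+1)\int_M (-\theta)^p\,\sqrt{-1}\partial\theta\wedge\ov\partial\theta\wedge T \;=\; \int_M (-\theta)^{p+1}(e^F-1)\,\omega_0^n.
\end{equation*}
Setting $u = (-\theta)^{(p+2)/2}$, the left side equals $\tfrac{4(p+1)}{(p+2)^2}\int_M \sqrt{-1}\partial u\wedge \ov\partial u\wedge T$, and the positivity of $\omega$ gives $T \ge \omega_0^{n-1}$; Exercise \ref{extr} then converts this into a bound on $\|\partial u\|_{L^2(\omega_0^n)}^2$. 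Since $|e^F-1| \le e^{\sup_M F}+1$ and $u^{2(p+1)/(p+2)} \le 1 + u^2$, I obtain
\begin{equation*}
\|\partial u\|_{L^2(\omega_0^n)}^2 \;\le\; C(p+2)\bigl(1 + \|u\|_{L^2(\omega_0^n)}^2\bigr),
\end{equation*}
with $C = C(M,\omega_0,\sup_M F)$. Feeding this into the Sobolev inequality on the $2n$-dimensional Riemannian manifold $(M,g_0)$, which embeds $W^{1,2}\hookrightarrow L^{2\beta}$ with $\beta := n/(n-1) > 1$, gives
\begin{equation*}
\|-\theta\|_{L^{(p+2)\beta}}^{p+2} \;\le\; C(p+2)\bigl(1 + \|-\theta\|_{L^{p+2}}^{p+2}\bigr).
\end{equation*}
Iterating along $q_k = 2\beta^k$, the product $\prod_k (Cq_k)^{1/q_k}$ is summable because $\log q_k / q_k$ decays geometrically, producing an estimate of the form $\|\theta\|_{L^\infty} \le C(1 + \|\theta\|_{L^2})$. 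Interpolating via $\|\theta\|_{L^2}^2 \le \|\theta\|_{L^1}\,\|\theta\|_{L^\infty}$ and absorbing then closes the argument, combining with the $L^1$-bound from step one to give $\|\theta\|_{L^\infty} \le C$, hence the desired oscillation bound.

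The main obstacle is the bookkeeping in the Moser iteration: one must verify that the polynomially growing Moser constants $C(p+2)$ are tamed by the geometric improvement $\beta > 1$ in the integrability exponent so that the infinite product converges, and that the interpolation step properly launches from the $L^1$ input. A secondary but nontrivial subtlety is the $L^1$-estimate itself, which cannot be extracted from the maximum principle applied to the equation, but instead relies on Green's-function information and the general theory of $\omega_0$-plurisubharmonic functions.
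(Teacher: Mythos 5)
The paper does not actually prove this theorem: immediately after stating it, it says ``We will omit the proof of Yau's Theorem \ref{thmyau}'' and simply cites \cite{Y2}, so there is no in-text argument to compare against. Your proposal reconstructs the standard Moser-iteration proof of Yau's estimate, and it is essentially correct: the $L^1$ bound via the Green's function, the telescoping identity $\omega^n-\omega_0^n=\sqrt{-1}\partial\ov{\partial}\theta\wedge T$ with $T\ge\omega_0^{n-1}$ and $T$ closed, the substitution $u=(-\theta)^{(p+2)/2}$, the Sobolev step with $\beta=n/(n-1)$, the convergence of $\prod_k(Cq_k)^{1/q_k}$, and the closing interpolation-and-absorption from $L^2$ down to $L^1$ all work as described, with every constant traceable to $(M,\omega_0)$ and $\sup_M F$ alone. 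The one internal inconsistency is in the $L^1$ step: with the representation formula written as $\theta(x)=\ov{\theta}+\int_M G(x,y)\,\Delta_{g_0}\theta(y)\,\omega_0^n(y)$ for the Laplacian $\Delta_{g_0}f=g_0^{i\ov{j}}\partial_i\partial_{\ov{j}}f$ used in these notes, the Green's function tends to $-\infty$ along the diagonal and is bounded \emph{above}, so the nonnegative quantity to integrate is $(K-G)(\Delta_{g_0}\theta+n)$; as written, the nonnegativity of $(G+K)(\Delta_{g_0}\theta+n)$ combined with your formula yields only the useless bound $\ov{\theta}\le C$. Either flip the sign in the representation formula (the geometers' convention, under which $G\ge-K$ is the correct normalization) or replace $G+K$ by $K-G$; with consistent conventions, evaluating at a point where $\theta=0$ gives $0\le\ov{\theta}+nK\,\mathrm{Vol}(M,\omega_0)$ and hence the $L^1$ bound. (A last cosmetic point: for $n=1$ the exponent $\beta=n/(n-1)$ degenerates, but any fixed $\beta>1$ from the borderline Sobolev embedding serves there.)
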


Note that if $\theta$ in this theorem is normalized by $\int_M \theta \, \omega^n=0$, say, then the conclusion of the theorem is that $\| \theta \|_{L^{\infty}} = \sup_M | \theta |\le C.$
We will omit the proof of Yau's Theorem \ref{thmyau} and proceed to prove Theorem \ref{thmcao2}.

We first prove the uniqueness part of Theorem \ref{thmcao2}.  Suppose $\oke$ and $\oke'$ are solutions of (\ref{krflat}) in the same K\"ahler class.  Then we can write $\oke' = \oke + \ddbar \psi$ for some smooth function $\psi$.  Since $\Ric(\oke) = \Ric(\oke')=0$ we have
$$\ddbar \log \frac{\oke'^n}{\oke^n}=0,$$
and so $\log (\oke'^n/\oke^n)$ is a constant.  Since the integral of $\oke^n$ is the same as the integral of $\oke'^n$ we have $\oke'^n = \oke^n$.   Then using Stokes' Theorem,
\[
\begin{split}
0 = \int_M \psi( \oke^n - \oke'^n) = {} & \int_M \psi (\oke - \oke') \wedge \sum_{i=0}^{n-1} \oke^i \wedge \oke'^{n-1-i}. \\
= {} &- \int_M \psi \ddbar \psi \wedge \sum_{i=0}^{n-1} \oke^i \wedge \oke'^{n-1-i} \\ 
= {} & \int_M \sqrt{-1} \partial \psi \wedge \ov{\partial} \psi \wedge \sum_{i=0}^{n-1} \oke^i \wedge \oke'^{n-1-i} \\
\ge {} & \int_M \sqrt{-1} \partial \psi \wedge \ov{\partial} \psi \wedge \oke^{n-1} = \frac{1}{n} \int_M | \partial \psi|^2_{\oke} \oke^n,
\end{split}
\]
where for the last inequality we are using the fact that all the terms that we are throwing away are nonnegative.  Hence
$\psi$ must be a constant and $\oke=\oke'$.

We now return to the K\"ahler-Ricci flow, and as usual  reduce the flow equation (\ref{krf}) to a parabolic complex Monge-Amp\`ere equation. The K\"ahler class does not change along the flow so we can choose $\omega_0$ as a reference metric.
  Since $c_1(M)=0$, there exists a volume form $\Omega$ with 
$$\ddbar \log \Omega =0 \quad \textrm{and} \quad \int_M \Omega = \int_M \omega_0^n.$$
Then the K\"ahler-Ricci flow is equivalent to:
\begin{equation} \label{may}
\ddt{\varphi} = \log \frac{(\omega_0 + \ddbar \varphi)^n}{\Omega}, \quad  \omega_0 + \ddbar \varphi>0, \quad \varphi|_{t=0} =0.
\end{equation}
From now on, let $\varphi=\varphi(t)$ solve this equation (\ref{may}).
We have the following lemma.

\begin{lem} \label{phiagain2}
There exists a uniform constant $C>0$ such that on $M \times [0,\infty)$,
\begin{enumerate}
\item[(i)] $\displaystyle{| \dot{\varphi}| \le C}.$
\item[(ii)] $\displaystyle{C^{-1} \Omega \le \omega^n \le C\Omega}$.
\item[(iii)] $\displaystyle{\emph{osc}(\varphi) \le C.}$
\end{enumerate}
\end{lem}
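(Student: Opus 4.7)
\medskip

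\noindent\textbf{Proof plan for Lemma \ref{phiagain2}.}  The strategy is to exploit the fact that when $c_1(M)=0$ the reference metric $\omega_0$ can be taken to be \emph{time-independent}, so that differentiating (\ref{may}) in $t$ produces a pure heat equation for $\dot\varphi$.  Once $\dot\varphi$ is controlled, the volume form estimate (ii) falls out of the equation itself, and the oscillation estimate (iii) is obtained by invoking Yau's $L^\infty$ estimate, Theorem \ref{thmyau}.

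\medskip

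\noindent\textbf{Step 1: estimate on $\dot\varphi$.}  Because $\omega_0$ does not depend on $t$, we have $\partial_t \omega = \ddbar\dot\varphi$, and differentiating (\ref{may}) in $t$ gives
\[
\ddt{}\dot\varphi \;=\; \tr{\omega}{(\ddbar\dot\varphi)} \;=\; \Delta\dot\varphi.
\]
Thus $\dot\varphi$ satisfies the linear heat equation $(\partial_t-\Delta)\dot\varphi=0$ on $M\times[0,\infty)$.  The maximum principle then yields
\[
\sup_M \dot\varphi(t)\;\le\;\sup_M \dot\varphi(0),\qquad \inf_M \dot\varphi(t)\;\ge\;\inf_M \dot\varphi(0),
\]
and at $t=0$ we have $\dot\varphi(0)=\log(\omega_0^n/\Omega)$, which is uniformly bounded since $\omega_0$ and $\Omega$ are fixed smooth objects on the compact manifold $M$.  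This proves (i).

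\medskip

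\noindent\textbf{Step 2: volume form estimate.}  Part (ii) is then immediate.  Indeed, exponentiating (\ref{may}) gives
\[
\omega^n \;=\; e^{\dot\varphi}\,\Omega,
\]
and the bound $|\dot\varphi|\le C$ from Step 1 yields $C^{-1}\Omega\le\omega^n\le C\,\Omega$.

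\medskip

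\noindent\textbf{Step 3: oscillation estimate via Yau.}  Writing $\Omega=e^{h}\omega_0^n$ for some fixed smooth function $h$ on $M$ (which exists because both $\Omega$ and $\omega_0^n$ are smooth positive volume forms), equation (\ref{may}) becomes
\[
(\omega_0+\ddbar\varphi)^n \;=\; e^{F(t)}\,\omega_0^n,\qquad F(t):=\dot\varphi(t)+h.
\]
By Step 1, $\sup_M F(t)$ is bounded uniformly in $t$.  Theorem \ref{thmyau} applied to each time slice $\varphi(t)$ therefore gives $\mathrm{osc}(\varphi(t))\le C$ with $C$ depending only on $(M,\omega_0)$ and the uniform bound on $\sup_M F$, which is exactly (iii).

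\medskip

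The only nontrivial input is Yau's $L^\infty$ estimate (Theorem \ref{thmyau}), which is quoted without proof; given it, all three parts are essentially one-line consequences of the fact that the reference class does not evolve when $c_1(M)=0$.
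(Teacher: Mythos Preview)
Your proof is correct and follows essentially the same route as the paper: differentiate (\ref{may}) to obtain the heat equation for $\dot\varphi$, apply the maximum principle to bound $\dot\varphi$ by its initial data, read off the volume bound from the equation, and then invoke Yau's estimate (Theorem \ref{thmyau}) for the oscillation. Your write-up is in fact more explicit than the paper's, which relegates the maximum-principle step to Exercise \ref{ex46} and merely remarks that (ii) and (iii) follow from (i) and Theorem \ref{thmyau}.
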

\begin{proof}
For (i), compute
$$\ddt{} \dot{\varphi} = \Delta \dot{\varphi}.$$
Then the bound on $\dot{\varphi}$ follows from the maximum principle (see the next exercise).  
Part (ii) follows from (i).  Part (iii) also follows from (i) together with Theorem \ref{thmyau}.
\end{proof}

We made use of:

\begin{exer} \label{ex46}
On a compact K\"ahler manifold $M$, let $f=f(x,t)$ satisfy the \emph{heat equation}
$$\ddt{} f = \Delta f, \quad f|_{t=0} = f_0,$$
where $\Delta$ is the Laplacian associated to $g(t)$, an arbitrary family of K\"ahler metrics. Then show that on $M \times [0,\infty)$,
$$|f| \le \sup_M |f_0|.$$
\emph{Hint:  consider $f\pm \ve t$.}
\end{exer}

Next we prove an estimate on the evolving metric $\omega(t)$.  Note that there is a complication here which did not appear in the case of $c_1(M)<0$ nor in the proof of Theorem \ref{thmmaximal}:  it arises because we only have a bound on the oscillation $\textrm{osc}(\varphi)$ and not on $| \varphi |$.

\begin{lem} \label{yso}
There exists a uniform constant $C$ such that on $M\times[0,\infty)$,
$$C^{-1} \omega_0 \le \omega(t) \le C \omega_0.$$
\end{lem}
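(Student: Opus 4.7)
The strategy is a parabolic Aubin--Yau--Cao argument, essentially repeating the computation of Lemma \ref{lemtr} (or Lemma \ref{lemtr2}), with one twist required by the fact that in the present setting we have only an oscillation bound on $\varphi$ (Lemma \ref{phiagain2}(iii)), not a bound on $|\varphi|$ itself. The heart of the matter is to show $\tr{\omega_0}{\omega}\le C$; the reverse inequality $\tr{\omega}{\omega_0}\le C$ will then follow from the volume form bound $C^{-1}\omega_0^n\le\omega^n\le C\omega_0^n$ (Lemma \ref{phiagain2}(ii)) via Exercise \ref{exeq}.

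First I would redo the computation of $\left(\ddt{}-\Delta\right)\log\tr{\omega_0}{\omega}$ using the identity (\ref{Rf}) and the K\"ahler condition exactly as in the proof of Lemma \ref{lemtr}; the term $\ddt{\omega}=-\mathrm{Ric}(\omega)$ is the same as there, so the outcome is
\[
\left(\ddt{}-\Delta\right)\log\tr{\omega_0}{\omega}\ \le\ C_0\,\tr{\omega}{\omega_0},
\]
where $C_0$ depends only on the curvature of $g_0$ (the nonpositive ``$(\dagger)$'' term is dropped, and the ``bad'' Riemann curvature term is estimated by $C_0(\tr{\omega}{\omega_0})(\tr{\omega_0}{\omega})$ and then divided by $\tr{\omega_0}{\omega}$).

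Next I would set $Q=\log\tr{\omega_0}{\omega}-A\varphi$ for a constant $A\ge C_0+1$, and use $\Delta\varphi=\tr{\omega}{(\omega-\omega_0)}=n-\tr{\omega}{\omega_0}$ together with the $\dot\varphi$-bound from Lemma \ref{phiagain2}(i) to get
\[
\left(\ddt{}-\Delta\right)Q\ \le\ C_0\tr{\omega}{\omega_0}-A\dot\varphi+An-A\tr{\omega}{\omega_0}\ \le\ -\tr{\omega}{\omega_0}+C'.
\]
The maximum principle then forces, at any spatial maximum point $(x_0,t_0)$ of $Q$ with $t_0>0$, the inequality $\tr{\omega}{\omega_0}(x_0,t_0)\le C'$. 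Combining this with Lemma \ref{phiagain2}(ii) and Exercise \ref{exeq} yields $\tr{\omega_0}{\omega}(x_0,t_0)\le C''$.

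The main subtlety (and the only real difference from the proof of Lemma \ref{lemtr2}) is at this step: one cannot simply bound $Q$ above at the maximum by an absolute constant because $\varphi$ is not uniformly bounded. Instead I would exploit that the $A\varphi$ terms enter only as a difference. For arbitrary $(x,t)\in M\times[0,\infty)$, applied at a maximum over $M\times[0,t]$,
\[
\log\tr{\omega_0}{\omega}(x,t)-A\varphi(x,t)\ \le\ \log\tr{\omega_0}{\omega}(x_0,t_0)-A\varphi(x_0,t_0)\ \le\ \log C''-A\varphi(x_0,t_0),
\]
hence
\[
\log\tr{\omega_0}{\omega}(x,t)\ \le\ \log C''+A\bigl(\varphi(x,t)-\varphi(x_0,t_0)\bigr)\ \le\ \log C''+A\,\mathrm{osc}(\varphi),
\]
which is bounded by Lemma \ref{phiagain2}(iii). (The case $t_0=0$ is trivial since $\varphi\equiv 0$ there.) This gives $\tr{\omega_0}{\omega}\le C$ uniformly on $M\times[0,\infty)$, and one last application of Exercise \ref{exeq} with Lemma \ref{phiagain2}(ii) produces $C^{-1}\omega_0\le\omega(t)\le C\omega_0$, as required.
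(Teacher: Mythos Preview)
Your argument follows the paper's up through the derivation of
\[
\log\tr{\omega_0}{\omega}(x,t)\ \le\ \log C'' + A\bigl(\varphi(x,t)-\varphi(x_0,t_0)\bigr),
\]
but the final step, bounding $\varphi(x,t)-\varphi(x_0,t_0)$ by $\mathrm{osc}(\varphi)$, does not work. The oscillation bound in Lemma \ref{phiagain2}(iii) is \emph{spatial}: for each fixed $t$ one has $\sup_M\varphi(\cdot,t)-\inf_M\varphi(\cdot,t)\le C$ (this is what Yau's Theorem \ref{thmyau} delivers). It says nothing about $\varphi(x,t)-\varphi(x_0,t_0)$ when $t_0<t$, and indeed since $|\dot\varphi|\le C$ is the only time control available, $\varphi$ may drift linearly in $t$. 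So the spatial oscillation bound alone cannot close the argument.

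The paper handles this as follows. Writing $\tilde\varphi=\varphi-\tfrac{1}{V}\int_M\varphi\,\Omega$, the spatial oscillation bound gives $|\tilde\varphi|\le C$ uniformly. The time drift of the average is controlled by a monotonicity: using $\dot\varphi=\log(\omega^n/\Omega)$ and Jensen's inequality,
\[
\frac{d}{dt}\left(\frac{1}{V}\int_M\varphi\,\Omega\right)=\frac{1}{V}\int_M\log\frac{\omega^n}{\Omega}\,\Omega\ \le\ \log\left(\frac{1}{V}\int_M\omega^n\right)=0,
\]
so $\int_M\varphi(t)\,\Omega=\inf_{[0,t]}\int_M\varphi\,\Omega$. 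Plugging $\varphi=\tilde\varphi+\tfrac{1}{V}\int_M\varphi\,\Omega$ into your inequality (equivalently into the paper's estimate $\tr{\omega_0}{\omega}\le C\exp\bigl(A(\varphi(x,t)-\inf_{M\times[0,t]}\varphi)\bigr)$), the averages cancel and only the bounded $\tilde\varphi$-terms remain. This is the missing ingredient in your proposal.
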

\begin{proof}
We claim that there exist uniform constants $C$ and $A$ such that for $(x,t) \in M \times [0,\infty)$,
\begin{equation} \label{claimA}
(\tr{\omega_0}{\omega})(x,t) \le C \exp{\left( A\left( \varphi(x,t) - \inf_{M \times [0,t]} \varphi \right) \right)}.
\end{equation}
To prove (\ref{claimA}), apply the maximum principle to the quantity $Q=\log \tr{\omega_0}{\omega} - A \varphi$ for a large constant $A$, as in the proofs of Lemmas \ref{lemtr} and \ref{lemtr2}.  The details are left as the next exercise.

Now define $$\tilde{\varphi} = \varphi - \frac{1}{V} \int_M \varphi \, \Omega, \quad \textrm{where } V= \int_M \Omega = \int_M \omega^n.$$
Since the oscillation of $\varphi$ is bounded, $| \tilde{\varphi}|$ is uniformly bounded.  From (\ref{claimA}), we have
\[
\begin{split}
(\tr{\omega_0}{\omega})(x,t) \le {} & C\exp{\left(A \left( \tilde{\varphi}(x,t) + \frac{1}{V} \int_M \varphi(t) \Omega - \inf_{M\times [0,t]} \tilde{\varphi} - \inf_{[0,t]} \frac{1}{V} \int_M \varphi \, \Omega \right) \right)} \\
\le {} & C' \exp{\left( \frac{A}{V}\left( \int_M  \varphi(t) \Omega - \inf_{[0,t]} \int_M \varphi \, \Omega \right) \right) },
\end{split}
\]
where for the last inequality we used the bound on $|\tilde{\varphi}|$.  But,
$$\frac{d}{dt} \left( \frac{1}{V} \int_M \varphi(t) \Omega \right) = \frac{1}{V} \int_M \dot{\varphi}\, \Omega = \frac{1}{V} \int_M \left( \log \frac{\omega^n}{\Omega} \right) \Omega \le \log \left( \frac{1}{V} \int_M \omega^n \right) =0,$$
by Jensen's inequality.  Hence
$$\int_M  \varphi(t) \Omega = \inf_{[0,t]} \int_M \varphi \, \Omega$$
and so we have a uniform upper bound for $\tr{\omega_0}{\omega}$.  The result then follows by applying Exercise \ref{exeq} and the volume bound of Lemma \ref{phiagain2}.
\end{proof}

\begin{exer} \label{ex47}
Prove the claim (\ref{claimA}).
\end{exer}

As in the previous section, we have estimates on $\varphi$ to all orders:

\begin{prop} \label{pestimates3} Let $\varphi=\varphi(t)$ solve (\ref{may}) for $t\in [0,\infty)$.  Then for each $k=0,1,2,\ldots$ there exists a positive constant $A_k$ such that on $[0,\infty)$,
$$\| \varphi \|_{C^k(M)} \le A_k, \quad \omega_0 + \ddbar \varphi \ge \frac{1}{A_0} \omega_0.$$
\end{prop}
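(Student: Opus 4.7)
The plan is to follow the bootstrap argument of Section \ref{secthigher} essentially verbatim, since the a priori estimates from Lemmas \ref{phiagain2} and \ref{yso} supply precisely the starting data that was needed there.

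First I would observe that Lemma \ref{yso} gives the two-sided bound $C^{-1} \omega_0 \le \omega_0 + \ddbar \varphi \le C \omega_0$, which immediately provides the claimed inequality $\omega_0 + \ddbar \varphi \ge A_0^{-1} \omega_0$ and, in particular, the uniform $C^{1,1}$-estimate $|\ddbar \varphi|_{g_0} \le C$. Rewriting (\ref{may}) as $(\omega_0 + \ddbar \varphi)^n = e^{\dot{\varphi}} \Omega$, Lemma \ref{phiagain2}(i) controls the right-hand side uniformly in $C^0$, and the metric bound provides uniform ellipticity. The parabolic complex Evans-Krylov theorem, in the form proved by Gill \cite{Gi} in the complex setting, then yields a uniform estimate $\| \varphi \|_{C^{2+\alpha, 1+\alpha/2}} \le C$.

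Next I would bootstrap to all higher orders exactly as in Section \ref{secthigher}. Applying a constant-coefficient first-order operator $L = \partial/\partial x^i$ to (\ref{may}) produces a linear uniformly parabolic equation for $L(\varphi)$ whose coefficients $g^{i \ov{j}}$ lie in $C^{\alpha, \alpha/2}$. Parabolic Schauder theory (Lieberman \cite{Lie}) gives $L(\varphi) \in C^{2+\alpha, 1+\alpha/2}$ uniformly, hence $\varphi \in C^{3+\alpha, 1+\alpha/2}$, and iterating this step produces the bounds $\| \varphi \|_{C^k(M)} \le A_k$ for every $k \ge 2$, with $k = 1$ following a fortiori.

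The $C^0$ bound on $\varphi$ itself needs a small additional argument, since Lemma \ref{phiagain2}(iii) controls only the oscillation. Decomposing $\varphi = \tilde{\varphi} + \ov{\varphi}$ with $\ov{\varphi}(t) = V^{-1} \int_M \varphi \, \Omega$, the oscillation bound gives $|\tilde{\varphi}| \le C$, and the Jensen-inequality computation from the proof of Lemma \ref{yso} shows that $\ov{\varphi}$ is non-increasing with $\ov{\varphi}(0) = 0$, hence $\ov{\varphi} \le 0$. For the lower bound on $\ov{\varphi}$, one exploits the heat equation $(\partial_t - \Delta) \dot{\varphi} = 0$ obtained by differentiating (\ref{may}) in $t$: under the now-established uniform ellipticity, parabolic Harnack forces $\dot{\varphi}$ to equilibrate as $t \to \infty$, whence $\ov{\varphi}$ is bounded below. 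This $C^0$ upgrade is the only mildly subtle point; the Evans-Krylov plus Schauder bootstrap is entirely routine and parallels Section \ref{secthigher} step by step.
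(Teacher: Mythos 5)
Your Evans--Krylov-plus-Schauder bootstrap is exactly the route the paper takes: Proposition \ref{pestimates3} is stated with no separate proof, the text simply pointing back through Proposition \ref{pestimates2} to Section \ref{secthigher}, and your use of Lemma \ref{yso} for uniform ellipticity and Lemma \ref{phiagain2} for the $C^0$ data of the equation is the intended argument. Where you genuinely depart from (and improve on) the paper is in noticing that Lemma \ref{phiagain2} controls only $\textrm{osc}(\varphi)$, so the claimed bound $\|\varphi\|_{C^0}\le A_0$ does not follow from the quoted lemmas alone; the paper is silent on this point. Your fix via the Harnack inequality for $\dot{\varphi}$ is the classical one (it goes back to Cao's original treatment) and it does work, but as written the step ``parabolic Harnack forces $\dot{\varphi}$ to equilibrate, whence $\ov{\varphi}$ is bounded below'' hides the two ingredients that actually make it run. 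Mere equilibration is not enough: if $\dot{\varphi}$ tended to $0$ like $1/t$, the average $\ov{\varphi}(t)=V^{-1}\int_M\varphi\,\Omega$ would still drift to $-\infty$. What you need is, first, that the equilibrium value is $0$, which follows from the normalization $\int_M e^{\dot{\varphi}}\,\Omega=\int_M\omega^n=\int_M\omega_0^n=\int_M\Omega$, forcing $\sup_M\dot{\varphi}(t)\ge 0\ge\inf_M\dot{\varphi}(t)$ and hence $|\dot{\varphi}|\le\textrm{osc}_M\,\dot{\varphi}$; and second, that the iterated Krylov--Safonov oscillation estimate gives geometric decay $\textrm{osc}_M\,\dot{\varphi}(t)\le Ce^{-\delta t}$, which is integrable in $t$ and therefore bounds $\ov{\varphi}$ below by $-C/\delta$. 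With those two sentences added your argument is complete.

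One further remark: the detour through the full $C^0$ bound is avoidable. The Evans--Krylov and Schauder steps see $\varphi$ only through $\ddbar\varphi$ and $\dot{\varphi}$, and everything downstream in the paper (the metric $\omega=\omega_0+\ddbar\varphi$, the functional $P(t)$, and the convergence argument) is likewise insensitive to adding a function of $t$ alone to $\varphi$. So one can simply run the entire proposition for the normalized potential $\tilde{\varphi}=\varphi-V^{-1}\int_M\varphi\,\Omega$, whose $C^0$ norm is already controlled by Lemma \ref{phiagain2}(iii); this is presumably what the paper implicitly intends. Your version proves the stronger statement literally as written, at the cost of the Harnack argument above.
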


Now we have estimates for the solution to the K\"ahler-Ricci flow, we know we have sequential $C^{\infty}$ convergence of $\varphi(t)$ to some smooth function $\varphi_{\infty}$, say (not \emph{a priori} unique).  To obtain smooth convergence to a K\"ahler-Einstein metric, we need a further argument.  In the absence of a decay estimate like that of Lemma \ref{c1lem}, part (ii), we use an argument of Phong-Sturm \cite{PS} and 
  make use of a functional that decreases along the flow. 

Define
$$P(t) = \int_M \dot{\varphi} \, \omega^n.$$
We leave it as an exercise to show that
\begin{equation} \label{ddtP}
\frac{d}{dt} P(t) = - \frac{1}{n} \int_M | \partial \dot{\varphi} |_g^2 \omega^n \le 0.
\end{equation}

\begin{exer} \label{ex48}
Prove (\ref{ddtP}).
\end{exer}

From Lemma \ref{phiagain2}, $P(t)$ is bounded.  Since it is decreasing, it follows that for a sequence of times $t_i \in [i, i+1]$, we have
$$\left( \int_M | \partial \dot{\varphi} |_g^2 \omega^n \right)(t_i) = \left( \int_M \left| \partial \log \frac{\omega^n}{\Omega} \right|_g^2 \omega^n \right)(t_i) \rightarrow 0.$$
Indeed if not there would exist $\ve>0$ and infinitely many time intervals $[i_j, i_j+1]$ on which
$$\int_M | \partial \dot{\varphi}|_g^2 \omega^n \ge \ve,$$
contradicting the fact that  
$$P(s) - P(0) = -\frac{1}{n} \int_0^s  \int_M  |\partial \dot{\varphi}|^2_g \omega^ndt$$
is bounded as $s\rightarrow \infty$.

Hence, from the $C^{\infty}$ estimates, $\varphi(t_i)$ converges (after passing to a subsequence) to a smooth function $\varphi_{\infty}$ with 
$$\log \frac{\omega_{\infty}^n}{\Omega} = \textrm{constant}, \quad \textrm{for } \omega_{\infty}:= \omega_0 + \ddbar \varphi_{\infty},$$
and taking $\ddbar$ gives
$$\Ric(\omega_{\infty}) =  \ddbar \log \Omega =0.$$
Then $\omega_{\infty} =\oke$, the unique K\"ahler-Einstein metric in the class $[\omega_0]$.  We have proved smooth convergence of the flow to $\oke$ 
 for some sequence of times $t_i \rightarrow \infty$.  

To prove full convergence, we make use of the following exercise.

\begin{exer} \label{ex49}
Show that 
$$\frac{d^2 P}{dt^2} \ge  C\frac{dP}{dt},$$
for some uniform $C$ (making use of the $C^{\infty}$ estimates).  Hence show that 
$\displaystyle{\frac{dP}{dt} \rightarrow 0}.$
\end{exer}

Given this, we obtain smooth convergence using the uniqueness of solutions to the K\"ahler-Einstein equation (\ref{krflat}).  Indeed, suppose for a contradiction that we do not have convergence of $\omega(t)$ to $\oke$.  Then there exists a sequence of times $t_i \rightarrow \infty$ so that, after passing to a subsequence, $\omega(t_i)$ converges in $C^{\infty}$ to $\omega'_{\infty} \neq \oke$.  But since 
$$\frac{dP}{dt} \rightarrow 0,$$
it follows from the above argument that $\omega'_{\infty} \in [\omega_0]$ also satisfies
$$\Ric(\omega'_{\infty}) =0,$$
contradicting the uniqueness of K\"ahler-Einstein metrics in $[\omega_0]$.  This completes the proof of Theorem \ref{thmcao2}.

\chapter{The K\"ahler-Ricci flow on K\"ahler surfaces, and beyond}

In this lecture we will discuss, informally and without proofs, the behavior of the K\"ahler-Ricci flow on  K\"ahler manifolds of complex dimension two.  We will also describe a flow, known as the Chern-Ricci flow, which makes sense on complex manifolds which do not admit K\"ahler metrics.

\section{Riemann surfaces}

First, let $(M, \omega_0)$ be a compact K\"ahler manifold of complex dimension 1.  All such manifolds either have $c_1(M)<0$, $c_1(M)=0$ or $c_1(M)>0$.  These correspond topologically to surfaces with genus $>1$, genus $1$ (a torus) or genus $0$ (the $2$-sphere).  

We know from the previous lecture that if $c_1(M)<0$  the K\"ahler-Ricci flow exists for all time with the volume of the manifold tending to infinity.  If we rescale the metric so that the volume remains bounded, then the normalized K\"ahler-Ricci flow converges at infinity to a K\"ahler-Einstein metric with negative Ricci curvature.

In the case $c_1(M)=0$  the K\"ahler-Ricci flow exists for all time and converges at infinity to a K\"ahler-Einstein metric with zero Ricci curvature.

This only leaves  $c_1(M)>0$  which is precisely the case of $\mathbb{P}^1$.   
We saw from Example \ref{P1krf} that when starting from the standard Fubini-Study metric, the $\mathbb{P}^1$ shrinks to a point along the K\"ahler-Ricci flow.  It is a deep result of Hamilton  \cite{Hs} and Chow \cite{Ch0} that starting at \emph{any} K\"ahler metric on $\mathbb{P}^1$, the K\"ahler-Ricci flow shrinks to a point in finite time, and if rescaled so that the flow exists for all time, converges smoothly to a K\"ahler-Einstein metric on $\mathbb{P}^1$.  The limiting metric is not necessarily the Fubini-Study metric, but is related to it by a biholomorphism.

This is essentially the full picture for the K\"ahler-Ricci flow on a compact Riemann surface (at least, for smooth initial metrics, cf. \cite{GT, MRS}).

\section{K\"ahler surfaces,  blowing up and Kodaira dimension}

Let $M$ be a compact manifold of complex dimension 2.  We call this a \emph{complex surface} (not to be confused with a Riemann surface!) and a \emph{K\"ahler surface} if it admits a K\"ahler metric, $\omega_0$, say.  There is a classification for complex surfaces, known as the Kodaira-Enriques classification (see \cite{BHPV} for example).  However, it is much more complicated than the picture for Riemann surfaces, and in fact there are still some gaps  to be filled.  

One reason that there are ``many more'' K\"ahler surfaces than Riemann surfaces comes from the \emph{blow-up procedure}.  This is a way of constructing a new complex surface from an old one.   We explain this in the simple case of the (non compact) complex manifold $\mathbb{C}^2$.  The \emph{blow-up of $\mathbb{C}^2$ at $0$} is defined to be
$$\textrm{Bl}_0\mathbb{C}^2 = \{ (z, \ell) \in \mathbb{C}^2 \times \mathbb{P}^1 \ | \ z \in \ell \}.$$
Recall that points $\ell$ in $\mathbb{P}^1$ can be regarded as complex lines through the origin in $\mathbb{C}^2$, so this definition makes sense.  $\textrm{Bl}_0\mathbb{C}^2$ is a complex submanifold of $\mathbb{C}^2 \times \mathbb{P}^1$ of codimension 1 (it is given by a single defining equation) and hence is complex manifold of dimension 2.
There is a holomorphic map 
$$\pi: \textrm{Bl}_0\mathbb{C}^2 \rightarrow \mathbb{C}^2, \qquad \pi(z, \ell) = z,$$
called the \emph{blow-up map}.  This map is certainly surjective, and since each non zero element of $\mathbb{C}^2$ lies on a unique line through the origin in $\mathbb{C}^2$, $\pi$ is injective away from $\pi^{-1}(0)$.  Since zero lies in every $\ell$ in $\mathbb{P}^1$, we have $\pi^{-1}(0) \cong \mathbb{P}^1$.  This set is called the \emph{exceptional curve}, which we write as $E$.  
The map $\pi$ is in fact a biholomorphism from $\textrm{Bl}_0\mathbb{C}^2 - E$ to $\mathbb{C}^2 - \{ 0 \}$, and maps $E$ to $0$ (see for example \cite{GH}).

What we have done here is replace a single point $0$ in $\mathbb{C}^2$ with a copy of $\mathbb{P}^1$, which we call the exceptional curve $E$, which represents all of the directions through $0$.  This is in fact a local process, and can be performed on any complex surface $M$ with a designated point $p$ to produce a new complex surface $\textrm{Bl}_pM$ of the same dimension, the \emph{blow-up of $M$ at $p$}.  The new surface $\textrm{Bl}_pM$ has ``more  topology'' than $M$ due to this extra $\mathbb{P}^1$, and indeed the second Betti number of $\textrm{Bl}_pM$ is exactly one more than the second Betti number of $M$.

We can reverse the process of blowing up.  If a complex surface $M$ contains a $\mathbb{P}^1$ which looks locally like the  $\mathbb{P}^1$ inside $\textrm{Bl}_0\mathbb{C}^2$ (i.e. same normal bundle) then we say this is an \emph{exceptional curve} $E$.  It's a theorem that there exists a map $\pi :M \rightarrow N$ to a new surface $N$ which \emph{blows down} the curve $E\subset M$ to a point $p\in N$.

If $M$ contains no exceptional curves, then we say that $M$ is \emph{minimal}.  $M$ can also be called a \emph{minimal model}.  Given the results we just stated, it is rather easy to see that given any compact complex surface $M$ we can obtain a minimal model by a finite sequence of blow downs.  Indeed, if an exceptional curve exists then blow it down.  This process reduces the second Betti number by one and hence must terminate after finitely many steps.  This simple algorithm is the baby version of the \emph{minimal model program} and was known to the classical algebraic geometers.  Its analogue in higher dimensions is far more complicated and the subject of much recent research (see for example \cite{BCHM}).

We now return to the K\"ahler-Ricci flow.  A calculation shows that if we integrate the Ricci curvature of a K\"ahler metric over an exceptional curve $E$, we obtain
$$\int_E \textrm{Ric}(\omega) = 2\pi,$$
and this formula is independent of the choice of K\"ahler metric and  of the surface in which the exceptional curve $E$ is contained (algebraic geometers write this formula as $K\cdot E=-1$). Along the K\"ahler-Ricci flow we have \cite{FIK}
$$\frac{d}{dt} \int_E \omega = - \int_E \textrm{Ric}(\omega) = - 2\pi.$$
So exceptional curves shrink along the K\"ahler-Ricci flow.  Feldman-Ilmanen-Knopf asked \cite{FIK}:  does the K\"ahler-Ricci flow blow down exceptional curves?  

Before answering this question, we make a brief digression to define the important concept of \emph{Kodaira dimension}.  Let $M$ be a 
 compact complex manifold of dimension $n$.  Write $K$ for the \emph{canonical bundle of $M$}, namely the line bundle of holomorphic $(n,0)$ forms on $M$.  Write $H^0(M, K)$ for the vector space of global holomorphic sections of $K$.  Namely, $H^0(M,K)$ is the vector space of holomorphic $(n,0)$ forms on $M$ (which could be the set $\{ 0 \}$).   Then for $\ell =1, 2, \ldots$, the space $H^0(M, K^{\ell})$ is the space of global holomorphic sections of $K^{\ell}$ (the $\ell$th tensor power).  If a line  bundle has many holomorphic sections, then its tensor powers will have many more.  The Kodaira dimension measures the growth of the dimension of $H^0(M, K^{\ell})$ as $\ell \rightarrow \infty$.
 
Define the \emph{Kodaira dimension of $M$} to be the smallest integer $\textrm{Kod}(M)$ such that 
 $$\dim H^0(M, K^{\ell}) \le C \ell^{\textrm{Kod}(M)}, \quad \textrm{ for $\ell$ large},$$
 with the convention that if $H^0(M, K^{\ell}) = \{0 \}$ then we take $\textrm{Kod}(M) = -\infty$.  It's a fact from algebraic geometry that $\ell$ must take one of the values $-\infty, 0,1,2, \ldots, n$.
 
We will quote here some basic facts about Kodaira dimension.  The first is that on a Riemann surface $M$ we have:
\begin{itemize}
\item If $c_1(M)<0$ then $\textrm{Kod}(M)=1$.
\item If $c_1(M)=0$ then $\textrm{Kod}(M)=0$.
\item If $c_1(M)>0$ then $\textrm{Kod}(M)=-\infty$.
\end{itemize}
 Indeed this follows from the fact that $c_1(M)<0$ corresponds via the Kodaira embedding theorem to the statement that $K$ is \emph{ample}, meaning that $K^{\ell}$ has lots of global sections for $\ell$ large.  The condition $c_1(M)>0$ corresponds to $K^{-1}$ being ample, which implies that $K^{\ell}$ has no nonzero sections for $\ell \ge 1$.  Finally, if a Riemann surface has $c_1(M)=0$ then $K$ is trivial and $\dim H^0(M, K^{\ell})=1$ for all $\ell \ge 1$.
  
The second fact is that Kodaira dimension has the following additive property:
$$\textrm{Kod} (M_1\times M_2) = \textrm{Kod}(M_1) \times \textrm{Kod}(M_2).$$
Now we can quickly compute some examples in complex dimension two:
\begin{enumerate}
\item[(a)] If $M$ is a product of two Riemann surfaces of genus $>1$ then $\textrm{Kod}(M)=2$.
\item[(b)] If $M$ is a product of a torus and a Riemann surface of genus $>1$ then $\textrm{Kod}(M)=1$.
\item[(c)] If $M$ is a product of two tori then $\textrm{Kod}(M)=0$.
\item[(d)] If $M$ is a product of a $\mathbb{P}^1$ with any other Riemann surface then $\textrm{Kod}(M)=-\infty$.
\end{enumerate}
Now returning to the K\"ahler-Ricci flow:  if we put a product K\"ahler-Einstein metric on each of the examples (a)-(d) we notice that the K\"ahler-Ricci flow exists for all time in cases (a)-(c), whereas in (d) we have collapsing of the $\mathbb{P}^1$ in finite time.  Morally speaking:  the condition $\textrm{Kod}(M)=-\infty$ means that there is some ``positive curvature'' direction which "wants to shrink" along the flow, whereas $\textrm{Kod}(M) \ge 0$ means we have only ``zero curvature'' or ``negative curvature'' directions.

Finally, the third fact is that Kodaira dimension is invariant under blow-ups:
 $$\textrm{Kod}(\textrm{Bl}_p(M)) = \textrm{Kod}(M).$$
  
 \section{Behavior of the K\"ahler-Ricci flow on K\"ahler surfaces} 
  
We now describe the behavior of the K\"ahler-Ricci flow on a K\"ahler surface. We break this up into different cases.

\subsection{Non-minimal K\"ahler surfaces with $\Kod(M)\neq -\infty$.}

 We first consider the case when $\textrm{Kod}(M)\ge 0$ (the case $\Kod(M) = -\infty$ is more complicated and will be discussed later).  We suppose that $M$ is not minimal - i.e. it has at least one exceptional curve.  A
  result of Song and the author \cite{SW2, SW3, SW4} says, roughly speaking, that the K\"ahler-Ricci flow blows down exceptional curves finitely many times until obtaining a minimal surface.
  We state the result somewhat informally:

\begin{thm}  \label{sw} Suppose that $M$ is a compact K\"ahler surface with $\emph{Kod}(M)\neq -\infty$ and assume that $M$ contains at least one exceptional curve.  Then  there exist finitely many disjoint exceptional curves $E_1, \ldots, E_k$ on $M$ and a map $\pi: M \rightarrow M_1$ blowing them down.  The K\"ahler-Ricci flow exists on $[0,T)$ for some $T$ with $0<T <\infty$ and ``blows down'' $E_1, \ldots, E_k$ and continues on the new manifold $M_1$.  This process repeats finitely many times until we obtain $M_{\ell}$ minimal.  On $M_{\ell}$ the K\"ahler-Ricci flow exists for all time. 
\end{thm}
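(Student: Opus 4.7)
The plan is to establish a cohomological contraction picture first, then upgrade it to smooth geometric convergence, and finally iterate until reaching a minimal model.

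First, I would analyze the maximal existence time using Theorem \ref{thmmaximal}. The formula $\int_E \omega(t) = \int_E \omega_0 - 2\pi t$, derived from $\int_E \Ric(\omega) = 2\pi$ on any exceptional curve, shows immediately that as $t$ increases the volume of each exceptional curve decreases linearly and hits zero at a finite time. Hence $T < \infty$, and in fact $T = \min_E \bigl(\tfrac{1}{2\pi} \int_E \omega_0\bigr)$ where the minimum is over all exceptional curves of $M$. Let $E_1, \ldots, E_k$ be precisely those exceptional curves achieving this minimum; one can show these are mutually disjoint (otherwise the intersection would force the limit class $[\omega_T] := [\omega_0] - T c_1(M)$ to fail the Nakai--Moishezon criterion nontrivially, contradicting the assumption $\Kod(M) \ge 0$ which forces $K_M$ to be ``positive enough'' after the appropriate contractions).

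Second, I would invoke a contraction theorem from surface theory: the limit class $[\omega_T]$ is nef with $[\omega_T]^2 > 0$ and is zero precisely on $E_1, \ldots, E_k$, so by the Nakai--Moishezon criterion (or equivalently a form of the cone theorem for surfaces) there exists a holomorphic blow-down $\pi : M \to M_1$ contracting exactly $E_1, \ldots, E_k$ to points, and a Kähler class $\alpha_1$ on $M_1$ with $[\omega_T] = \pi^* \alpha_1$. The positivity $[\omega_T]^2 > 0$ is where $\Kod(M) \neq -\infty$ is essential---if $\Kod(M) = -\infty$, the limit class could collapse the whole manifold or a large portion of it, as in Example \ref{P1krf}.

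Third, and this is the analytic heart of the proof, I would choose reference metrics $\hat\omega_t$ interpolating between $\omega_0$ and (a smoothing of) $\pi^* \omega_{M_1}$ for some Kähler metric $\omega_{M_1}$ on $M_1$, and rewrite the flow as a parabolic complex Monge--Ampère equation as in Section 3.1. The key estimates are uniform $C^0$ bounds on $\varphi$ and $\dot\varphi$ up to $t = T$ (using maximum principle arguments in the spirit of Lemmas \ref{lemmaphi} and \ref{lemdotphi}), combined with Schwarz-lemma-type bounds giving $\omega(t) \le C\,\pi^* \omega_{M_1}$ globally and $\omega(t) \ge C^{-1} \omega_0$ only on compact subsets of $M \setminus \bigcup_i E_i$. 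These allow the higher-order parabolic theory of Section \ref{secthigher} to run away from the exceptional set, producing smooth convergence $\omega(t) \to \omega_T$ on $M \setminus \bigcup_i E_i$ as $t \to T^-$, with $\pi_* \omega_T$ extending to a Kähler metric on $M_1$ (possibly of mild regularity at the contracted points, smoothed by the next step). This degenerate estimate step is the main obstacle: controlling the collapse of $\omega(t)$ along $E_i$ without losing uniform control elsewhere requires substantial work with weighted barriers and parabolic Schwarz lemmas.

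Finally, I would restart the flow on $M_1$ from the limit metric, invoking a short-time existence theorem for possibly weakly regular initial data, or alternatively approximate by smooth initial metrics and take limits. Since each blow-down strictly decreases the second Betti number $b_2$ by the number of contracted curves, and $\Kod(M_j) = \Kod(M)$ is preserved, after finitely many iterations we reach a minimal model $M_\ell$ with $\Kod(M_\ell) \ge 0$ and no exceptional curves. For such $M_\ell$, $K_{M_\ell}$ is nef, i.e.\ $-c_1(M_\ell)$ lies in the closure of the Kähler cone; by Exercise \ref{ex27} the class $[\omega_0] - t c_1(M_\ell) = [\omega_0] + t\bigl(-c_1(M_\ell)\bigr)$ remains nef, indeed Kähler, for all $t \ge 0$, so $T = \infty$ by Theorem \ref{thmmaximal}, completing the proof.
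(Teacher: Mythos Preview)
The paper does not prove this theorem. Chapter 5 is explicitly presented as a survey ``informally and without proofs''; Theorem \ref{sw} is quoted as a result of Song and the author from \cite{SW2, SW3, SW4}. The only arguments the paper supplies are in the paragraph following the statement: that $T<\infty$ and that $T=\infty$ on the minimal model $M_\ell$ both follow from Theorem \ref{thmmaximal}, and that restarting the flow on $M_1$ uses a result of Song--Tian \cite{SoT3}. There is no proof here for you to compare against.

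That said, your outline is a faithful sketch of the strategy actually used in the Song--Weinkove papers: identify the limiting class $[\omega_T]$, show it is the pullback of a K\"ahler class under a blow-down $\pi:M\to M_1$, run the parabolic Monge--Amp\`ere machinery with degenerate reference metrics to get smooth convergence on $M\setminus\bigcup E_i$, then restart and iterate. A few small corrections: the formula $T=\min_E\frac{1}{2\pi}\int_E\omega_0$ over exceptional curves is not how $T$ is characterized in general---you should compute $T$ directly from Theorem \ref{thmmaximal} via the Nakai--Moishezon/Demailly--Paun description of the K\"ahler cone, and then argue that the null locus consists of disjoint $(-1)$-curves; this uses $\Kod(M)\ge 0$ in a more substantial way than you indicate. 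Also, the paper's definition of ``blows down'' includes Gromov--Hausdorff convergence of $(M,g(t))$ to the metric completion, which requires diameter and distance estimates beyond the local $C^\infty$ estimates you describe in step three.
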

  
It should be explained what is meant by the K\"ahler-Ricci flow ``blowing down'' exceptional curves $E_1, \ldots, E_k$, since this is the essential content of the result (the fact that the flow exists only for a finite time follows easily from Theorem \ref{thmmaximal}, as does the fact that the K\"ahler-Ricci flow exists   for all time on $M_{\ell}$.)

We say that the K\"ahler-Ricci flow \emph{blows down} $E_1, \ldots, E_k$ if, first,  the flow $g(t)$ converges smoothly on compact subsets of $M \setminus \cup E_i$ to a smooth K\"ahler metric $g_T$ and if $(M, g(t))$ converges globally as a metric space to the metric completion of $(M \setminus \cup E_i, g_T)$.  Second, we insist that there exists a smooth solution to the K\"ahler-Ricci flow on $M_1$ for $t>T$ which converges as $t\rightarrow T^+$ to $g_T$ smoothly on compact subsets away from the points $p_i:=\pi(E_i)$.  Third, we require that $(M_1, g(t))$ converges globally as a metric space to $(M \setminus \cup E_i, g_T)$ as $t \rightarrow T^+$.  Here ``converges as a metric space'' means convergence in the sense of Gromov-Hausdorff (we omit the precise definition here).

The fact that the K\"ahler-Ricci flow can be restarted on the new manifold $M_1$ makes use of theorem of Song-Tian \cite{SoT3}.  The study of the K\"ahler-Ricci flow in relation to the minimal model program was initiated by Song and Tian \cite{SoT1, T2, SoT2, SoT3}  and is known as the \emph{analytic minimal model program} (see also \cite{LT}).

\subsection{Minimal surfaces with $\Kod(M)\neq -\infty$} \label{sectionmin}

Next we discuss the case of what happens on a minimal surface.  As stated in the theorem above, the flow exists for all time.  Indeed, from some basic algebraic geometry, every minimal $M$ with $\Kod(M)\neq -\infty$  has  $-c_1(M)$ nef and  we can apply Exercise \ref{ex27} and Theorem \ref{thmmaximal}.  The behavior of the flow as $t \rightarrow \infty$ depends crucially on the Kodaira dimension.

First suppose that $\Kod(M)=2$.  If $c_1(M)<0$ then, from the results discussed in Lecture 4,  the flow converges after normalization to a K\"ahler-Einstein metric.  Otherwise,  the canonical bundle is ``big and nef'' which means that $-c_1(M)$ satisfies a weaker positivity condition.  A result of Tsuji \cite{Ts1} and Tian-Zhang \cite{TZha} shows that the normalized flow converges to a K\"ahler-Einstein metric smoothly on compact subsets of $M \setminus V$ where $V$ is a certain subvariety on $M$.

If $\Kod(M)=1$ then  the manifold is a ``properly elliptic surface''.  Namely, there exists a surjective holomorphic map $f: M \rightarrow S$ to a Riemann surface $S$ with the property that $f^{-1}(s)$ is a torus for all but finitely many $s \in S$.  It was shown by Song-Tian \cite{SoT1} that, in a weak sense, the K\"ahler-Ricci flow collapses these torus fibers and converges to a ``generalized K\"ahler-Einstein metric'' on $S$.  This is a metric whose Ricci curvature is given by the negative of the metric plus some additional terms arising from the non-product structure of $M$.  In the simpler case when $M$ is a product or a smooth fibration (with $S$ necessarily a Riemann surface of genus $>1$), it was shown in (\cite{SW4, Gi2, FZ}, see also \cite{GTZ}) that the flow converges smoothly to the K\"ahler-Einstein metric on $S$ as $t\rightarrow \infty$.

Finally if $\Kod(M)=0$ then $c_1(M)=0$ and, by the result discussed in Lecture 4, the K\"ahler-Ricci flow converges smoothly to a K\"ahler-Einstein metric with zero Ricci curvature.

Combining these results with that of Theorem \ref{sw} we see that the K\"ahler-Ricci flow is largely understood if $\Kod(M)\neq -\infty$.  Moreover,  the behavior of the flow reflects the geometry of the underlying complex manifold.

\subsection{The case of $\Kod(M)=-\infty$} \label{sectinfty}

We now discuss  the more troublesome case when $\Kod(M)=-\infty$.  Indeed even in the simple case of $\mathbb{P}^1 \times \mathbb{P}^1$, Exercise \ref{P1P1e} shows that very different behavior can occur if different initial metrics are chosen.  

We will focus on a slightly more complicated example:  let $M$ be $\mathbb{P}^2$ blown up at a single point $p$.  Recalling the definition of the blow up of $\mathbb{C}^2$ at the origin, we see that in addition to the map $\pi: \textrm{Bl}_0\mathbb{C}^2 \rightarrow \mathbb{C}^2$ there is another map
$$f:  \textrm{Bl}_0\mathbb{C}^2 = \{ (z, \ell) \in \mathbb{C}^2 \times \mathbb{P}^1 \ | \ z \in \ell \} \rightarrow \mathbb{P}^1,$$
given by projection $f(z, \ell) = \ell$ onto the second factor.  If we compactify $\mathbb{C}^2$ to $\mathbb{P}^2$ we have maps
\[ 
\begin{split}
\textrm{Bl}_p\mathbb{P}^2 {} &  \overset{\pi}{\longrightarrow}   \mathbb{P}^2 \\
\downarrow f \ \ & \\
\mathbb{P}^1\ \ \  & 
\end{split}
\]
and $f$ is a bundle map whose fibers are isomorphic to $\mathbb{P}^1$.  Write $\omega_{\mathbb{P}^2}$ and $\omega_{\mathbb{P}^1}$ for the Fubini-Study metric on $\mathbb{P}^2$ and $\mathbb{P}^1$ respectively.

The K\"ahler cone of $M$ is given by
$$\textrm{Ka}(M) = \{  x [f^*\omega_{\mathbb{P}^1}] + y [\pi^* \omega_{\mathbb{P}^2}]  \ | \ x, y \in \mathbb{R}^{>0} \},$$
and the first Chern class of $M$ by
$$c_1(M) = 2[\pi^*\omega_{\mathbb{P}^2}] + [f^*\omega_{\mathbb{P}^1}]>0.$$
There are three different possible behaviors of the K\"ahler-Ricci flow, depending on where the initial K\"ahler class $[\omega_0]$ lies, as illustrated by Figure \ref{fp2bu}.

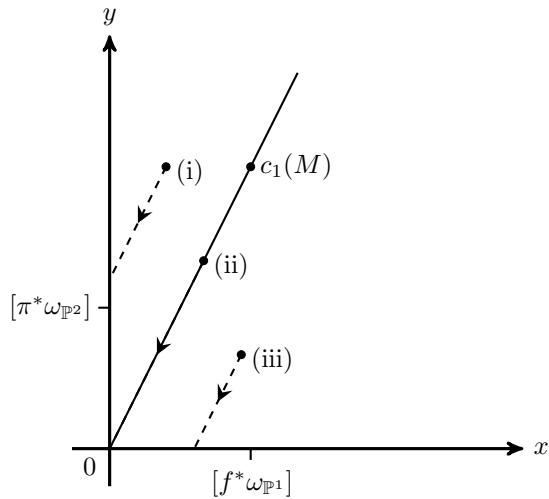
\begin{figure}[h!]
\begin{center}
\begin{tikzpicture}[scale=5, important line/.style={thick}, ax
    dashed line/.style={dashed,->, thick}, axis/.style={very thick, ->, >=stealth'},
        every node/.style={color=black} ]
        \draw[axis] (-0.1,0)  -- (1.1,0) node(xline)[right]
        {$x$};
    \draw[axis] (0,-0.1) -- (0,1.1) node(yline)[above] {$y$};
    \draw[important line] (0,0) -- (.5,1);
    \draw[directed, dashed, thick] (.15, .75) node[right, text width=1em] {\ (i)}  -- (0,.45);
    \draw[directed, dashed, thick] (.25, .5) node[right, text width=1em] {\ (ii)} -- (0,.0);
    \draw[directed, dashed, thick] (.35, .25)  node[right, text width=1em] {\ (iii)} -- (0.225,.0);
    \draw[fill] (.15, .75) circle (.3pt);
    \draw[fill] (.25,.5) circle (.3pt);
    \draw[fill] (.35, .25) circle (.3pt);
    \draw[fill] (.375,.75) circle (.3pt) node[right, text width=2em] {$c_1(M)$};
    \draw (0,0) node[left, below, text width=2em] {$0$};
    \draw[important line] (0.375,0) -- (0.375,-.03) node[below, text width=3em] {$[f^*\omega_{\mathbb{P}^1}]$};
    \draw[important line] (0,0.375) -- (-0.03,.375) node[left, text width=3em] {$[\pi^*\omega_{\mathbb{P}^2}]$};
 \end{tikzpicture}    
\end{center}
\caption{Three behaviors of the flow on the blow up of $\mathbb{P}^2$} \label{fp2bu}
\end{figure}

\begin{enumerate}
\item[(i)]  If $[\omega_0]$ lies above the line $y=2x$ then the K\"ahler-Ricci flow blows down the exceptional curve in the sense described above, and continues on $\mathbb{P}^2$ \cite{SW1, SW2, SW3}.
\item[(ii)]  If $[\omega_0]$ lies on the line $y=2x$ containing $c_1(M)$ then the K\"ahler-Ricci flow shrinks to a point in finite time \cite{P4, SeT}.  After rescaling and reparametrizing converges to a K\"ahler-Ricci soliton (which is a solution of the K\"ahler-Ricci flow which moves by automorphism) \cite{Cao3, Ko, Zhu, TZhu,  PSSW3, TZZZ, CS}. 
\item[(iii)] If $[\omega_0]$ lies below the line $y=2x$ then the K\"ahler-Ricci flow contracts the $\mathbb{P}^1$ fibers and converges at least by sequence in the sense of metric spaces to a metric on the base $\mathbb{P}^1$ \cite{SW1, SSW}.
\end{enumerate}
A general K\"ahler surface with $\textrm{Kod}(M)=-\infty$ is either $\mathbb{P}^2$ or a $\mathbb{P}^1$ bundle over a Riemann surface or is obtained by blowing up one of these manifolds.    The K\"ahler-Ricci flow always exhibits one of the three behaviors (i), (ii) or (iii) described above.  In (ii),  the K\"ahler-Ricci soliton may be ``trivial'' - i.e. a K\"ahler-Einstein metric \cite{TY, T, WZ}.

\subsection{Some open problems}

Although much is understood about the K\"aher-Ricci flow in the case of K\"ahler surfaces, and the general conjectural picture is now well-laid out, there are a number of difficult problems that remain, and we mention here just a few.

A well-known problem is to understand more precisely the singularity formation when an exceptional curve contracts.  Feldman-Ilmanen-Knopf \cite{FIK} conjectured that the blow-up limit, obtaining by rescaling the metric around the singular time,  should yield the shrinking non-compact K\"ahler-Ricci soliton that they constructed.  This conjecture was confirmed in the example of Section \ref{sectinfty} assuming symmetric data by M\'aximo \cite{Ma}, using a result of Song \cite{So2} that in the symmetric case the singularity is of Type I (meaning that the curvature bound $|\textrm{Rm}|\le C/(T-t)$ holds).  Related to this is a folklore conjecture that all finite time singularities of the K\"ahler-Ricci flow are of Type I.  Even the question of whether the bound on the scalar curvature $R\le C/(T-t)$ holds is open (cf. \cite{Zhang}).

The result of Theorem \ref{sw} for the K\"ahler-Ricci flow makes use of results from algebraic geometry and K\"ahler surface theory to prove existence of exceptional curves.  It would be a long term goal to use the flow to construct algebraic objects such as exceptional curves and give new analytic proofs of results in algebraic geometry.  In this direction,  it was shown by Collins-Tosatti \cite{CT} that in general whenever the K\"ahler-Ricci flow encounters a noncollapsing singularity, the metrics develop singularities precisely along an algebraic variety (proving a conjecture of \cite{FIK}).   

Another problem is to understand the global metric behavior of the K\"ahler-Ricci flow as $t \rightarrow \infty$ in the case of a minimal surface with $\Kod(M)=2$ when $c_1(M)$ is not negative.  In the case when the variety $V$ (as discussed above) is a union of disjoint curves of self-intersection $-2$, it was shown in \cite{SW3} that the K\"ahler-Ricci flow converges  in the sense of metric spaces to an orbifold K\"ahler-Einstein manifold.  The general case of  possibly  intersecting curves is still open.  

It is also an open problem to understand precisely the behavior of the K\"ahler-Ricci flow as $t \rightarrow \infty$ on a minimal surface with $\Kod(M)=1$ which is not a smooth fibration.  The results of Song-Tian \cite{SoT1} give information about convergence of the flow in the sense of currents, but it is still unknown exactly what happens to the metrics.  Here, difficulties arise from the presence of singular and multiple fibers.

A difficult problem is to understand collapsing along the K\"ahler-Ricci flow in the case of negative Kodaira dimension.  Surprisingly, it is still an open problem to determine the precise behavior of the flow even in the case of $\mathbb{P}^1 \times \mathbb{P}^1$ when one of the fibers collapses.  It was shown by Song-Sz\'ekelyhidi-Weinkove \cite{SSW} that the diameter of the collapsing fiber is bounded above by a multiple of $(T-t)^{1/3}$, but this falls short of the optimal rate of $(T-t)^{1/2}$ (see also \cite{Fo2}).  It is expected that the blow-up limit is a product with a flat direction and this has been proved under symmetry conditions \cite{Fo, So2}.
Underlying this difficulty is the depth of the problem of understanding the K\"ahler-Ricci flow for a general initial metric on $\mathbb{P}^1$ (the result of Hamilton and Chow), which still has no simple proof.  In higher dimensions the problem is far harder, since one could replace the fiber $\mathbb{P}^1$ by a general Fano manifold, where the behavior of the flow is far more difficult to understand (and this goes way beyond the scope of these notes).

Finally, of course, one would like to extend all of these ideas to higher dimensions.  Considerable progress is being made  \cite{SoT2, SoT3, So3, SY, GZ} and this will continue to be a challenging and exciting area of research for many years to come.

\section{Non-K\"ahler surfaces and the Chern-Ricci flow}

As seen from the last section, the K\"ahler-Ricci flow is now quite well understood in the case of complex dimension two.   Given any K\"ahler surface, we have a more-or-less complete picture of how the flow will behave (modulo some open problems, as discussed above).  This is in contrast to the Ricci flow on general four-manifolds, where despite the success of Ricci flow in three dimensions \cite{H1,H3,P1,KL, CZ,MT1} we do not yet have any kind of conjectural picture.  Is there a larger class of four-manifolds than K\"ahler surfaces for which we can say anything?

We consider the class of compact complex surfaces, which include \emph{non-K\"ahler} surfaces, namely surfaces which do not admit any K\"ahler metric.  For an  example, consider the simplest Hopf surface
$$H = (\mathbb{C}^2- \{ 0 \}) / \sim,$$
where $(z^1, z^2) \sim (2z^1, 2z^2)$.  This is a compact complex surface, diffeomorphic to $S^3 \times S^1$ via the map
$$z \mapsto \left( \frac{z}{|z|}, |z| \right) \in S^3 \times \mathbb{R}^{>0}/(r \sim 2r) \cong S^3 \times S^1,$$ 
where we consider $S^3$ as a subset of $\mathbb{C}^2$ in the usual way.  $H$  cannot admit a K\"ahler metric since its second Betti number vanishes.
  
All complex surfaces admit Hermitian metrics.  Given  such a metric $g_0$, we can consider the associated $(1,1)$ form
$$\omega_0 = \sqrt{-1} g_{i\ov{j}} dz^i \wedge d\ov{z}^j,$$
which is not necessarily closed.  We define the \emph{Chern-Ricci form} of $\omega_0$ to be
$$\textrm{Ric}(\omega_0) = -\ddbar \log \det g_0.$$
This coincides with the usual Ricci curvature if $g_0$ is K\"ahler.  The form $\textrm{Ric}(\omega_0)$ is still a closed $(1,1)$-form for Hermitian $g_0$, but the key point is that in the non-K\"ahler case $\textrm{Ric}(\omega_0)$ is not in general equal to the Riemannian Ricci curvature of $g_0$.

 We consider a parabolic flow of Hermitian metrics on a complex surface $M$:
\begin{equation} \label{crf}
\ddt{} \omega = - \textrm{Ric}(\omega), \quad \omega|_{t=0} = \omega_0.
\end{equation}
The formula is the same as for the K\"ahler-Ricci flow, but we are allowing $g_0$ to be non-K\"ahler.   The equation (\ref{crf}) is known as the \emph{Chern-Ricci flow}, and shares many of the properties of the K\"ahler-Ricci flow \cite{Gi, Gi3, TW2, TW3, TWY, ShW2}.

Note that the Chern-Ricci flow is \emph{not} the same as the Ricci flow in general.  The Ricci flow starting at a Hermitian metric may immediately become non-Hermitian and little is known about the behavior of the flow.
Other examples of  Hermitian flows generalizing the K\"ahler-Ricci flow have been given in \cite{ST}.

The Chern-Ricci flow  was first introduced by M. Gill \cite{Gi} in the setting of manifolds with vanishing first Bott-Chern class, which we now explain.  We define
$$H^{1,1}_{\textrm{BC}}(M, \mathbb{R}) = \frac{ \{ \textrm{$\ov{\partial}$-closed real $(1,1)$ forms} \}}{\textrm{Im} \, \partial \ov{\partial}},$$
which coincides with $H^{1,1}_{\ov{\partial}}(M, \mathbb{R})$ when $M$ is K\"ahler, as discussed in Lecture 2.  We then define the first Bott-Chern class of $M$ by
$$c_1^{\textrm{BC}}(M) = [\textrm{Ric}(\omega_0)] \in H^{1,1}_{\textrm{BC}}(M, \mathbb{R}),$$
and by the same argument as in Lecture 2, this is independent of choice of Hermitian metric $\omega_0$.

Gill \cite{Gi} proved:

\begin{thm} \label{thmgill}
If $M$ is a compact complex manifold with $c_1^{\textrm{BC}}(M)=0$ then there exists a unique  solution $\omega(t)$ to the Chern-Ricci flow (\ref{crf}) starting at any Hermitian metric $\omega_0$.  As $t \rightarrow \infty$, 
$$\omega(t) \rightarrow \omega_{\infty} \  \textrm{in } C^{\infty}(M),$$
where $\emph{Ric}(\omega_{\infty})=0$.
\end{thm}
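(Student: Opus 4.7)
The plan is to follow the proof of Theorem \ref{thmcao2} step by step, identifying the new difficulties that arise because $\omega_0$ is merely Hermitian rather than K\"ahler. First I would reduce the flow to a parabolic complex Monge-Amp\`ere equation: since $c_1^{\textrm{BC}}(M) = 0$, one can find a smooth real function $f$ with $\textrm{Ric}(\omega_0) = \ddbar f$, and then an appropriate multiple of $e^{f}\omega_0^n$ yields a volume form $\Omega$ with $\ddbar \log \Omega = 0$ and $\int_M \Omega = \int_M \omega_0^n$. Setting $\omega(t) = \omega_0 + \ddbar \varphi(t)$, the Chern-Ricci flow (\ref{crf}) becomes
$$\dot\varphi = \log \frac{(\omega_0 + \ddbar\varphi)^n}{\Omega}, \quad \omega_0 + \ddbar\varphi > 0, \quad \varphi|_{t=0} = 0,$$
which is formally identical to (\ref{may}), the only difference being that $\omega_0$ is now only Hermitian. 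Short-time existence and uniqueness are then standard for this scalar parabolic equation.

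The bulk of the work is in the a priori estimates. As in Lemma \ref{phiagain2}(i), differentiating the Monge-Amp\`ere equation in $t$ gives $(\partial_t - \Delta)\dot\varphi = 0$, so $|\dot\varphi| \le C$ by the maximum principle. The oscillation bound $\textrm{osc}(\varphi) \le C$ would follow from the Hermitian generalization of Yau's $L^\infty$ estimate (Theorem \ref{thmyau}), which is available in the non-K\"ahler setting.

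The main obstacle is the Laplacian estimate $C^{-1}\omega_0 \le \omega \le C\omega_0$, the analog of Lemma \ref{yso}. In the K\"ahler setting the inequality (\ref{c1ex}) relies on the Aubin-Yau-Cao computation together with the nonpositive Schwarz-type term $(\dagger)$ from Exercise \ref{ex35}. In the Hermitian case $\partial\omega_0 \ne 0$, so the computation of $(\partial_t - \Delta)\log \textrm{tr}_{\omega_0}\omega$ picks up extra torsion terms (of $\omega_0$ and $\partial\ov{\partial}\omega_0$) that lack a favorable sign. Following Cherrier and the Tosatti-Weinkove theory of the complex Monge-Amp\`ere equation on Hermitian manifolds, these bad terms can be absorbed by modifying the Aubin-Yau barrier, for instance replacing $Q = \log \textrm{tr}_{\omega_0}\omega - A\varphi$ by $Q = \log\textrm{tr}_{\omega_0}\omega - A\varphi + Be^{-\varphi}$ or a similar expression, and exploiting the freedom in the constants. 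Once the metric equivalence is established, the higher-order estimates follow from parabolic Evans-Krylov and the bootstrap of Section \ref{secthigher}, yielding uniform $C^\infty$ bounds and hence $T_{\max} = \infty$.

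For convergence as $t \to \infty$, I would use the Phong-Sturm monotone functional from the proof of Theorem \ref{thmcao2}: $P(t) = \int_M \dot\varphi\,\omega^n$. The analog of (\ref{ddtP}) shows $P$ is decreasing, and the argument of Exercise \ref{ex49} gives $dP/dt \to 0$. Thus along a subsequence $t_i \to \infty$, $\dot\varphi(t_i) \to 0$ in $C^\infty$, and passing to the limit in the Monge-Amp\`ere equation yields $\log(\omega_\infty^n/\Omega) = \textrm{const}$; taking $\ddbar$ and using $\ddbar\log \Omega = 0$ then gives $\textrm{Ric}(\omega_\infty) = 0$. Full (non-subsequential) convergence follows from a uniqueness argument for Chern-Ricci-flat metrics in the natural $\ddbar$-class of $\omega_0$, analogous to the K\"ahler-Einstein uniqueness in the $c_1(M) = 0$ case. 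In both the monotonicity computation and the uniqueness step, one must track the torsion terms arising from $d\omega_0 \ne 0$ when integrating by parts, but these remain under control once the uniform metric equivalence is in hand.
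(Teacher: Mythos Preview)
The paper does not actually prove Theorem~\ref{thmgill}; it is quoted as Gill's result \cite{Gi}, with only the remark that it uses the Hermitian $L^\infty$ estimate of \cite{TW1}. So there is no ``paper's own proof'' to match, and your sketch must be judged on its own merits.

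Your reduction to the scalar equation, the bound $|\dot\varphi|\le C$, the oscillation bound via \cite{TW1}, and the recognition that the second--order estimate requires absorbing torsion terms into a modified barrier are all correct and are indeed how Gill proceeds. Long--time existence follows exactly as you describe: on each finite interval $[0,T]$ one has $|\varphi|\le CT$ from $|\dot\varphi|\le C$, and then the (Hermitian) second--order estimate and bootstrap give $T$--dependent bounds, so the flow extends past every finite $T$.

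The genuine gap is in the \emph{uniform-in-time} estimates and convergence. Two steps of the K\"ahler argument break in the Hermitian setting and you have not said how to repair them. First, the Jensen trick in Lemma~\ref{yso} uses
\[
\frac{1}{V}\int_M \Big(\log\frac{\omega^n}{\Omega}\Big)\Omega \le \log\Big(\frac{1}{V}\int_M\omega^n\Big)=0,
\]
and the last equality relies on $\int_M\omega^n=\int_M\omega_0^n$. When $d\omega_0\neq 0$ the total volume $\int_M(\omega_0+\ddbar\varphi)^n$ is \emph{not} constant in $t$, so the monotonicity of $\int_M\varphi\,\Omega$ fails and you lose the mechanism that upgrades the oscillation bound on $\varphi$ to a uniform bound on $\varphi(x,t)-\inf_{M\times[0,t]}\varphi$. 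Your modified barrier still only yields an estimate depending on this quantity. Second, the Phong--Sturm functional $P(t)=\int_M\dot\varphi\,\omega^n$ is not monotone in the Hermitian case: the computation of \eqref{ddtP} uses $\int_M\Delta f\,\omega^n=0$ and $\int_M f\Delta f\,\omega^n=-\frac{1}{n}\int_M|\partial f|^2\omega^n$, both of which pick up torsion terms without a sign when $d\omega\neq 0$. Saying these terms ``remain under control once the uniform metric equivalence is in hand'' does not help, since boundedness does not give monotonicity, and the subsequence argument $\int_M|\partial\dot\varphi|^2\omega^n\to 0$ genuinely needs $P$ to be bounded \emph{and} monotone.

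Gill avoids both issues by following Cao's original route rather than the Phong--Sturm argument used in these notes: he proves a Li--Yau type differential Harnack inequality for $\dot\varphi$ (which solves the heat equation $(\partial_t-\Delta)\dot\varphi=0$) and deduces exponential decay $\mathrm{osc}_M\dot\varphi\le Ce^{-\eta t}$. This gives convergence of $\dot\varphi$ to a constant and hence Cauchy--in--$t$ control of $\varphi$ (up to a linear drift that one subtracts), from which the uniform-in-time metric bounds and $C^\infty$ convergence follow. If you want to salvage your approach, you would need either to replace the Phong--Sturm step by a Harnack argument, or to find a genuinely new monotone quantity in the Hermitian setting.
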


This is the analogue of Cao's Theorem \ref{thmcao2} proved in Lecture 4.  This result made use of an $L^{\infty}$ estimate for the complex Monge-Amp\`ere equation in the Hermitian case due to Tosatti and the author \cite{TW1} (see also \cite{Che, GL}).  Note that in fact this result holds in all dimensions.  

An analogue of Theorem \ref{thmcao} also exists.  As there, it is convenient to consider the normalized flow:
\begin{equation} \label{ncrf}
\ddt{} \omega = - \textrm{Ric}(\omega) - \omega, \quad \omega|_{t=0} = \omega_0.
\end{equation}

  It was shown by Tosatti and the author \cite{TW2} that:

\begin{thm} \label{tw1}
Let $M$ be a compact complex manifold with $c_1(M)<0$.  Then there exists a unique solution to the  Chern-Ricci flow (\ref{crf}) starting at any Hermitian metric $\omega_0$.  As $t \rightarrow \infty$, the solution $\omega(t)$ to the normalized flow (\ref{ncrf}) satisfies
$$\omega(t) \rightarrow \omega_{\emph{KE}} \ \textrm{in } C^{\infty}(M),$$
where $\omega_{\emph{KE}}$ is the unique K\"ahler-Einstein metric on $M$ satisfying $\emph{Ric}(\omega_{\emph{KE}}) = - \omega_{\emph{KE}}.$
\end{thm}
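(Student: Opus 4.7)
The plan is to mimic the strategy used to prove Theorem \ref{thmcao}, but carefully account for torsion terms arising in the Hermitian (non-K\"ahler) setting. Since $c_1(M)<0$ (in the Bott-Chern sense), the Aubin-Yau theorem supplies a K\"ahler-Einstein metric $\oke$ with $\Ric(\oke)=-\oke$. I would choose the family of reference Hermitian metrics
\[
\hat{\omega}_t = e^{-t}\omega_0 + (1-e^{-t})\oke,
\]
which satisfies $\hat{\omega}_t + \ddt{}\hat{\omega}_t = \oke$. Next, fix a smooth volume form $\Omega$ with $\ddbar \log \Omega = \oke$ and $\int_M \Omega = \int_M \omega_0^n$ (such an $\Omega$ exists because $\oke \in -c_1^{\mathrm{BC}}(M)$ and $-\ddbar \log \omega_0^n$ also represents this class, so they differ by $\ddbar f$). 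A direct computation then shows that the normalized Chern-Ricci flow (\ref{ncrf}) is equivalent to the parabolic complex Monge-Amp\`ere equation
\[
\ddt{\varphi} = \log \frac{(\hat{\omega}_t + \ddbar \varphi)^n}{\Omega} - \varphi, \quad \hat{\omega}_t + \ddbar \varphi>0, \quad \varphi|_{t=0}=0,
\]
with $\omega(t) = \hat{\omega}_t + \ddbar \varphi$. Short-time existence of the flow follows from the parabolic theory developed by Gill \cite{Gi}, so it suffices to establish uniform $C^\infty$ estimates on $\varphi$ and show exponential convergence.

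For the zeroth- and first-order estimates, I would first bound $|\varphi|$ uniformly by applying the maximum principle to $\varphi$ itself: at an interior max, $\ddbar \varphi \le 0$ gives $\dot{\varphi} \le \log(\hat{\omega}_t^n/\Omega) - \varphi$, yielding $\varphi \le C$, with the symmetric argument for the lower bound. For $\dot{\varphi}$, I would follow the Tian-Zhang trick used in Lemma \ref{c1lem}(ii): differentiate the equation in $t$ to get $(\ddt{} - \Delta)\dot{\varphi} = \tr{\omega}{\ddt{}\hat{\omega}_t} - \dot{\varphi}$ (where $\Delta = g^{i\ov j}\partial_i\partial_{\ov j}$ is the \emph{Chern} Laplacian), and $(\ddt{} - \Delta)\varphi = \dot{\varphi} - n + \tr{\omega}\hat{\omega}_t$. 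Combining these with coefficients $e^t$ and $1$, and exploiting the identity $\hat{\omega}_t + \ddt{}\hat{\omega}_t = \oke > 0$, should give exponential decay $|\dot{\varphi}| \le C(t+1)e^{-t}$ exactly as in the K\"ahler case, because this step only uses $\ddbar$ of things and is insensitive to torsion in $\omega_0$. Integrating in $t$ then yields a Cauchy statement producing a uniform limit $\varphi_\infty$ with $|\varphi(t)-\varphi_\infty| \le Ce^{-t/2}$.

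The main obstacle will be the second-order estimate $\tr{\omega_0}{\omega} \le C$. In the K\"ahler derivation of (\ref{calc1}) we used $\partial_i g_0^{k\ov\ell} = 0$ at a normal coordinate point for $g_0$; in the Hermitian case there are no holomorphic normal coordinates and this identity fails, so the computation of $(\ddt{} - \Delta)\log \tr{\omega_0}\omega$ picks up extra terms involving $\partial \omega_0$ and $\partial\partial \omega_0$ (the Chern torsion of $g_0$ and its derivative). The resolution, following Tosatti-Weinkove \cite{TW2} and Cherrier \cite{Che}, is to replace the test quantity $Q = \log \tr{\omega_0}\omega - A\varphi$ by $Q = \log \tr{\omega_0}\omega - A\varphi + \frac{1}{\varphi + C}$ (or some similar modification with an auxiliary term), so that when $A$ is chosen sufficiently large depending on the torsion and curvature of $g_0$, the bad torsion contributions are absorbed by the good negative term $-A\tr{\omega}{\hat{\omega}_t} \le -\tr{\omega}\omega_0$. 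The bound on $\dot\varphi$ from the previous step feeds in here. Once this estimate is in place, Exercise \ref{exeq} gives $C^{-1}\omega_0 \le \omega \le C\omega_0$, and the higher-order estimates follow from Evans-Krylov in the complex setting (as in \cite{Gi}) plus a standard bootstrap, giving uniform $C^k$ bounds on $\varphi$.

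With the uniform $C^\infty$ estimates, Exercise \ref{ex45} applied to the exponential convergence $\varphi(t) \to \varphi_\infty$ promotes it to $C^\infty$ convergence, and similarly $\dot{\varphi}\to 0$ in $C^\infty$. Taking $t \to \infty$ in the Monge-Amp\`ere equation gives $0 = \log ((\oke + \ddbar\varphi_\infty)^n/\Omega) - \varphi_\infty$; applying $\ddbar$ and using $\ddbar \log \Omega = \oke$ yields $\Ric(\omega_\infty) = -\omega_\infty$ for $\omega_\infty := \oke + \ddbar \varphi_\infty$. Since $\omega_\infty$ is a K\"ahler-Einstein metric in the class $-c_1^{\mathrm{BC}}(M) = [\oke]$ with $\mu = -1$, uniqueness (by Calabi, as recalled in the proof of Theorem \ref{thmcao}) forces $\omega_\infty = \oke$, completing the proof.
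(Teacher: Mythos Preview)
The paper does not actually prove this theorem; it is stated in the final expository section and attributed to \cite{TW2} without proof. So there is no in-paper argument to compare against.

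That said, your outline is essentially the approach carried out in \cite{TW2}. The reduction to the parabolic Monge--Amp\`ere equation, the $C^0$ bound on $\varphi$, and the Tian--Zhang-style exponential decay of $\dot\varphi$ go through verbatim from the K\"ahler case because, as you note, those arguments use only the Chern Laplacian $g^{i\ov j}\partial_i\partial_{\ov j}$ and the identity $\Delta\varphi = \tr{\omega}{(\omega-\hat\omega_t)}$, which hold regardless of torsion. You also correctly identify the second-order estimate as the only place where the non-K\"ahler nature of $\omega_0$ matters: computing $(\ddt{}-\Delta)\log\tr{\omega_0}\omega$ produces extra terms involving the torsion $T^0$ of $g_0$ and its derivatives, and these must be absorbed. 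One minor correction: in \cite{TW2} the auxiliary term added to $\log\tr{\omega_0}\omega - A\varphi$ is not $\frac{1}{\varphi+C}$ but rather an expression of the form $\frac{1}{\tilde C - \varphi}$ (or equivalently one works with $e^{-A\varphi}\tr{\omega_0}\omega$ and uses a Cherrier-type Cauchy--Schwarz argument), chosen so that its gradient term can kill the mixed torsion--gradient terms. The rest of your plan (Evans--Krylov in the Hermitian setting from \cite{Gi}, bootstrap, and the convergence argument via Exercise~\ref{ex45}) is correct as stated.
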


Observe that the condition $c_1(M)<0$ implies that the manifold $M$ is K\"ahler.  The point of this theorem is that the Chern-Ricci flow takes any \emph{non-K\"ahler} Hermitian metric to the K\"ahler-Einstein metric.

Furthermore, we have a natural analogue of the maximal existence time theorem \cite{TW2}:

\begin{thm}  Given any Hermitian metric $\omega_0$, there exists a unique maximal solution of the Chern-Ricci flow starting at $\omega_0$ on $[0,T)$, where
\[
T= \sup \left\{ t>0 \ \bigg| \ \begin{array}{l} \emph{there exists $\psi \in C^{\infty}(M)$ such that  } \\
\ \ \omega_0 - t \emph{Ric}(\omega_0) + \ddbar \psi >0 \end{array} \right\}. 
\]
\end{thm}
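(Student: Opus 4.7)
The plan is to mirror the argument for Theorem \ref{thmmaximal} from the K\"ahler setting, taking care that $\omega_0$ need not be closed. First, short-time existence and uniqueness of the Chern-Ricci flow starting at an arbitrary Hermitian $\omega_0$ hold on some maximal interval $[0,S)$; this is standard since the flow reduces to a strictly parabolic scalar equation (the Monge-Amp\`ere equation below). To see $S \le T$, integrate $\partial_t \omega = \ddbar \log \det g$ to obtain
$$\omega(t) - \omega_0 = \ddbar \int_0^t \log \det g(s)\, ds,$$
so $\psi(t) := -\int_0^t \log\bigl( \det g(s)/\det g_0 \bigr)\, ds$ satisfies $\omega_0 - t\,\Ric(\omega_0) + \ddbar \psi(t) = \omega(t) > 0$ for every $t \in [0,S)$. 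Thus $S \le T$; assume for contradiction that $S < T$.

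Next, I would reduce the flow to a parabolic complex Monge-Amp\`ere equation. Fix $S' \in (S, T)$ and $\psi_{S'} \in C^\infty(M)$ with $\omega_0 - S'\,\Ric(\omega_0) + \ddbar \psi_{S'} > 0$. Set $\chi := -\Ric(\omega_0) + \tfrac{1}{S'}\ddbar \psi_{S'}$ and $\hat\omega_t := \omega_0 + t\chi$ for $t \in [0, S']$; each $\hat\omega_t$ is Hermitian as a convex combination of two positive $(1,1)$-forms. Since $\chi$ is $\ddbar$-exact, choose a smooth volume form $\Omega$ with $\ddbar \log \Omega = -\chi$. A computation analogous to Exercise \ref{ex31} shows the Chern-Ricci flow on $[0, S']$ is equivalent to
$$\frac{\partial \varphi}{\partial t} = \log \frac{(\hat\omega_t + \ddbar \varphi)^n}{\Omega}, \qquad \hat\omega_t + \ddbar \varphi > 0, \qquad \varphi|_{t=0} = 0,$$
via $\omega(t) = \hat\omega_t + \ddbar \varphi(t)$.

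The third step is to establish uniform-in-$t$ $C^\infty$ estimates for $\varphi$ on $[0, S)$, in analogy with Proposition \ref{pestimates}. The $C^0$ bounds on $\varphi$ and $\dot\varphi$ from Lemmas \ref{lemmaphi} and \ref{lemdotphi} transfer essentially verbatim, as their proofs used only that $\hat\omega_t$ is a smooth family of positive $(1,1)$-forms on a compact time interval together with the maximum principle; they give in particular $C^{-1}\Omega \le \omega^n \le C\Omega$. Once a uniform upper bound for $\tr{\omega_0}{\omega}$ is in hand, Exercise \ref{exeq} yields the full metric equivalence $C^{-1}\omega_0 \le \omega \le C\omega_0$, after which the complex parabolic Evans-Krylov theorem and the bootstrap of Section \ref{secthigher} deliver uniform $C^k$ bounds for every $k$. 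An Arzel\`a-Ascoli argument then produces a smooth Hermitian limit $\omega(S)$, and restarting the flow from $\omega(S)$ via short-time existence extends the solution past $S$, contradicting maximality.

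The principal obstacle is the trace estimate. In the K\"ahler case, the identity \eqref{calc1} and the nonpositivity of $(\dagger)$ both rested on $d\omega_0 = 0$: the symmetry $\partial_k (g_0)_{i\ov{j}} = \partial_i (g_0)_{k\ov{j}}$ was used to write $\partial_i \partial_{\ov{j}} g_0^{k\ov{\ell}}$ in terms of the $g_0$-curvature and to complete the square in $(\dagger)$. In the Hermitian case, differentiation of $(g_0)_{i\ov{j}}$ produces additional terms involving the torsion $T_0 := \partial \omega_0$ and its first derivatives, and these torsion contributions are not automatically of favorable sign. Following Tosatti-Weinkove \cite{TW2} (cf.\ also the elliptic approach of Cherrier \cite{Che}), I would apply the maximum principle to a quantity of the form $Q = \log \tr{\omega_0}{\omega} - A\varphi$ (or a mild variant augmented by an auxiliary function of $\varphi$), choosing $A$ large enough that the favorable term $-A\,\tr{\omega}{\hat\omega_t}$ arising from $A\Delta \varphi = A\,\tr{\omega}{(\omega - \hat\omega_t)}$ dominates both the curvature term $C_0\,\tr{\omega}{\omega_0}$ and the torsion error, the latter being bounded pointwise by a constant multiple of $\tr{\omega}{\omega_0}$ with constant depending only on $\omega_0$. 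Granted this estimate, the higher-order theory is standard and the contradiction argument above closes the proof.
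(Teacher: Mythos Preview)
The paper does not give its own proof of this theorem; Chapter~5 is explicitly ``informal and without proofs,'' and the statement is simply quoted from \cite{TW2}. So there is no paper proof to compare against. Your outline is, however, essentially the strategy of \cite{TW2}: reduce to a parabolic Monge--Amp\`ere equation for $\varphi$ with a moving reference form $\hat\omega_t$, prove $C^0$ bounds on $\varphi$ and $\dot\varphi$ exactly as in Lemmas~\ref{lemmaphi}--\ref{lemdotphi}, prove a trace bound, and bootstrap. Two minor slips: the sign of your $\psi(t)$ is wrong (integrating $\partial_t\omega=\ddbar\log\det g$ gives $\psi(t)=+\int_0^t\log(\det g(s)/\det g_0)\,ds$), and $\chi=-\Ric(\omega_0)+\tfrac{1}{S'}\ddbar\psi_{S'}$ is \emph{not} $\ddbar$-exact in general; what is true is that $\chi=\ddbar\log\big(\omega_0^n\,e^{\psi_{S'}/S'}\big)$, so a volume form $\Omega$ with $\ddbar\log\Omega=\chi$ exists (and the sign should be $+\chi$, not $-\chi$, for the equivalence to hold).

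The genuine gap is in your trace estimate. You say the torsion error ``is bounded pointwise by a constant multiple of $\tr{\omega}{\omega_0}$,'' but that is only true for the \emph{zeroth-order} torsion terms (those involving $\nabla^0 T_0$ or $|T_0|^2$). When you differentiate $\tr{\omega_0}{\omega}$ with $g_0$ non-K\"ahler, you also produce \emph{first-order} terms of the schematic form $g^{i\ov{j}}\,(T_0)\ast\nabla^0 g$, which are not controlled by $\tr{\omega}{\omega_0}$ alone. These must be absorbed, via Cauchy--Schwarz, into the good negative Bochner-type term (the analogue of $(\dagger)$), and that term itself acquires torsion corrections in the Hermitian case. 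Making this absorption work --- so that after Cauchy--Schwarz a strictly negative multiple of the third-order quantity survives --- is precisely the technical heart of the argument in \cite{TW2} (and of Cherrier's elliptic estimate \cite{Che}); it does not come for free from the K\"ahler computation. Your sketch needs to spell this step out.
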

Although it looks like $T$ depends only on $\omega_0$, it really only depends on the ``equivalence class'' of $\omega_0$, where we say that two Hermitian metrics are equivalent if their forms differ by the $\partial \ov{\partial}$ of a function.  In many cases, it is easy to compute $T$, just as in the K\"ahler case.

We return now to the case of complex dimension $2$ and impose an additional assumption on $\omega_0$:
\begin{equation} \label{ga}
\ddbar \omega_0 =0.
\end{equation}
This is a natural assumption on complex surfaces, since, by a theorem of Gauduchon \cite{G},  given any Hermitian metric $\omega$ on $M$ there exists a smooth function $\sigma$ so that $e^{\sigma} \omega$ satisfies (\ref{ga}).  A metric on a complex surface which satisfies (\ref{ga}) is called \emph{Gauduchon} (in dimension  $n$, the condition is $\ddbar \omega^{n-1}=0$.)  It is immediate from the definition that the Chern-Ricci flow preserves the condition (\ref{ga}).

If $M$ is a \emph{minimal} complex surface, then a similar picture as in the K\"ahler case (Section \ref{sectionmin})  is now emerging: 
\begin{itemize}
\item If $\textrm{Kod}(M)=0$ then $c_1^{\textrm{BC}}(M)=0$ and Gill's Theorem \ref{thmgill} implies that the Chern-Ricci flow exists for all time and converges to a Chern-Ricci flat metric. 
\item  If $\textrm{Kod}(M)=1$ and $M$ is non-K\"ahler, then $M$ is, up to a finite covering, a smooth elliptic bundle over a Riemann surface $S$.  A result of Tosatti-Weinkove-Yang \cite{TWY} says that the normalized Chern-Ricci flow (\ref{ncrf}) exists for all time and converges in the sense of metric spaces to an orbifold K\"ahler-Einstein metric on $S$.  
\item If $\textrm{Kod}(M)=2$ then $M$ admits a K\"ahler metric (in fact $M$ is projective algebraic).  If 
  $c_1(M)<0$ and we start the flow from a Hermitian metric,  we can apply Theorem \ref{tw1} above to obtain convergence of the normalized  Chern-Ricci flow to a K\"ahler-Einstein metric.   Otherwise the canonical bundle is big and nef and a result of Gill \cite{Gi3} says that the flow converges smoothly to a K\"ahler-Einstein metric outside a subvariety, generalizing the results of \cite{Ts1, TZha}.
\end{itemize}
  
If $M$ is non-minimal with $\Kod(M)\neq -\infty$, then as in the case of the K\"ahler-Ricci flow we can ask whether the Chern-Ricci flow ``blows down'' exceptional curves.  This is in general an open problem.  However,  it was shown in \cite{TW2} that one obtains smooth convergence outside the curves of negative self-intersection.  Moreover, the curves contract in the sense of metric spaces if the initial metric $\omega_0$ satisfies the additional assumption that $d\omega_0$ is the exterior derivative of the pull-back of a form from the blow-down manifold \cite{TW3}.   It is not difficult to find examples when this condition is satisfied, but we conjecture that one should be able to remove this assumption.

The case of $\Kod(M)=-\infty$ is both more interesting and more difficult.  The  minimal non-K\"ahler surfaces with $\Kod(M)=-\infty$ are known as Class VII surfaces.
When $b_2=0$, $M$ is either a Hopf surface or an Inoue surface.  
It was shown in \cite{TW2} that on a Hopf surface, the Chern-Ricci flow \emph{collapses} in finite time, meaning that the volume tends to zero.  By contrast, on the Inoue surface the flow exists for all time.  Explicit examples were given in \cite{TW3} for a family of Hopf surfaces  which exhibit collapsing in the sense of Gromov-Hausdorff to $S^1$.  Examples on Inoue surfaces also show collapsing of the normalized flow as $t\rightarrow \infty$ \cite{TW3}.  

When $M$ is a Class VII surface with $b_2>0$, the exact behavior of the flow is a mystery, and no explicit examples are known.  However, it was shown in \cite{TW2} that the Chern-Ricci flow always collapses in finite time.   It is a major open problem to complete the classification of Class VII surfaces when $b_2 >1$ (for the cases $b_2=0,1$ see \cite{In, LYZ, Na, T0, T1}) and this is a motivating factor for studying non-K\"ahler surfaces.

\appendix

\chapter{Solutions to exercises}

\noindent
{\bf \ref{S2}.} $S^2= \{ (x^1, x^2, x^3) \in \mathbb{R}^3 \ | \ (x^1)^2+(x^2)^2+(x^3)^2=1 \}$ is a complex manifold with charts $(S^2 - \{ (0,0,1) \}, w)$ and $(S^2 - \{ (0,0,-1) \}, \tilde{w})$ given by
$$w = \frac{x^1+\sqrt{-1}x^2}{1-x^3}, \quad \tilde{w} = \frac{x^1-\sqrt{-1} x^2}{1+x^3},$$
which are related by $w=1/\tilde{w}$ on the overlap.  On the other hand $\mathbb{P}^1$ has two complex charts $U_0 = \{ Z_0 \neq 0\}$ with $z=Z_1/Z_0$ and $U_1 = \{ Z_1 \neq 0 \}$ with $\tilde{z}=Z_0/Z_1$, which are  related by $z=1/\tilde{z}$. All of the maps $w, \tilde{w}, z, \tilde{z}$ map onto $\mathbb{C}$.
Then  define a map $S^2 \rightarrow \mathbb{P}^1$ by mapping $S^2 - \{ 0,0,1\} \rightarrow U_0$ via $ z^{-1}\circ  w$ and $S^2 - \{ 0,0,-1\} \rightarrow U_1$ via $\tilde{z}^{-1} \circ \tilde{w}$.  This is well-defined and holomorphic with holomorphic inverse, hence a diffeomorphism.

\noindent
{\bf \ref{exh}.}   Let $(U,\tilde{z})$ be another coordinate chart.  If $\displaystyle{ \frac{\partial X^i}{\partial \ov{z}^j}=0}$ then on $U \cap \tilde{U}$,
$$ \frac{\partial \tilde{X}^k}{\partial \ov{\tilde{z}}^{\ell}} = \frac{\partial \ov{z}^j}{\partial \ov{\tilde{z}}^{\ell}} \frac{\partial}{\partial \ov{z}^j} \left( X^i \frac{\partial \tilde{z}^k}{\partial z^i} \right)= \frac{\partial \ov{z}^j}{\partial \ov{\tilde{z}}^{\ell}} \frac{\partial X^i}{\partial \ov{z}^j}  \frac{\partial \tilde{z}^k}{\partial z^i}=0.$$

\noindent
{\bf \ref{exinp}.} On $U \cap \tilde{U}$, from (\ref{gtrans}),
$$g_{i\ov{j}} X^i \ov{Y^j} = \tilde{g}_{k\ov{\ell}} \frac{\partial \tilde{z}^k}{\partial z^i} X^i \ov{\frac{\partial \tilde{z}^{\ell}}{\partial z^j} Y^j} = \tilde{g}_{k\ov{\ell}} \tilde{X}^k \ov{\tilde{Y}^j}.$$

\noindent
{\bf \ref{ex14}.} Take the inverse of both sides of (\ref{gtrans}).

\noindent
{\bf \ref{ex15}.} Assume (\ref{kahlercondition}) holds on $U$.  
  On $U \cap \tilde{U}$,
\[
\begin{split}
\frac{\partial}{\partial \tilde{z}^m} \tilde{g}_{p \ov{q}} - \frac{\partial}{\partial \tilde{z}^p} \tilde{g}_{m \ov{q}} = {} & \frac{\partial}{\partial \tilde{z}^m} \left( g_{i\ov{j}}  \frac{\partial z^i}{\partial \tilde{z}^p} \ov{\frac{\partial z^j}{\partial \tilde{z}^q}} \right) -  \frac{\partial}{\partial \tilde{z}^p} \left( g_{i\ov{j}} \frac{\partial z^i}{\partial \tilde{z}^m} \ov{\frac{\partial z^j}{\partial \tilde{z}^q}}  \right) \\
= {} & \left(  \frac{\partial z^k}{\partial \tilde{z}^m}  \frac{\partial}{\partial z^k} g_{i\ov{j}} \right) \frac{\partial z^i}{\partial \tilde{z}^p} \ov{\frac{\partial z^j}{\partial \tilde{z}^q}} + g_{i\ov{j}} \frac{\partial^2 z^i}{\partial \tilde{z}^m \partial \tilde{z}^p} \ov{\frac{\partial z^j}{\partial \tilde{z}^q}} \\
& -  \left(  \frac{\partial z^k}{\partial \tilde{z}^p}  \frac{\partial}{\partial z^k} g_{i\ov{j}} \right) \frac{\partial z^i}{\partial \tilde{z}^m} \ov{\frac{\partial z^j}{\partial \tilde{z}^q}} - g_{i\ov{j}} \frac{\partial^2 z^i}{\partial \tilde{z}^p \partial \tilde{z}^m} \ov{\frac{\partial z^j}{\partial \tilde{z}^q}} \\
 = {} & \left( \frac{\partial}{\partial z^k} g_{i\ov{j}} - \frac{\partial}{\partial z^i} g_{k\ov{j}} \right) \frac{\partial z^k}{\partial \tilde{z}^m} \frac{\partial z^i}{\partial \tilde{z}^p} \ov{\frac{\partial z^j}{\partial \tilde{z}^q}} =0.
\end{split}
\]

\noindent
{\bf \ref{exfs}.} To show that it is well-defined tensor, use the general fact that $\displaystyle{\frac{\partial}{\partial z^i} \frac{\partial}{\partial  \ov{z}^j}}$ acting on functions transforms according to
$$\frac{\partial}{\partial z^i} \frac{\partial}{\partial \ov{z}^j} = \dtu{k}{i} \ov{\dtu{\ell}{j}} \dt{k} \frac{\partial}{\partial \ov{\tilde{z}^{\ell}}}, \quad \textrm{on } U \cap \tilde{U},$$
together with the fact that $\ddbar \log |f|^2=0$ for $f$ a nowhere vanishing holomorphic function.

To see that $(g_{i\ov{j}})$ is positive definite, write $\underline{z}=(z^1, \ldots, z^n)$ and compute on e.g. $U_0$, 
$$(g_{i\ov{j}}) = \left( \frac{\delta_{ij}+|\underline{z}|^2 \delta_{ij} -z^j \ov{z^i}}{(1+ |\underline{z}|^2)^2} \right) \ge  \left( \frac{\delta_{ij}}{(1+ |\underline{z}|^2)^2} \right)>0,$$
since by the Cauchy-Schwarz inequality $|\underline{z}|^2 \delta_{ij} -z^j \ov{z^i}$ is semipositive.

It is immediate from the definition that $\partial_k g_{i\ov{j}} = \partial_i g_{k\ov{j}}$, so $g$ is K\"ahler.

\noindent
{\bf \ref{ex17}.}  $\displaystyle{\partial \omega = \left( \partial_k g_{i\ov{j}} - \partial_i g_{k\ov{j}} \right) dz^k \wedge dz^i \wedge d\ov{z}^j}$ and hence $\partial \omega=0$ if and only if $\omega$ is K\"ahler.  Taking conjugates, $\ov{\partial}\omega = 0$ if and only if $\partial \omega=0$ since $\omega$ is real.  Finally, $d\omega = \partial \omega+ \ov{\partial}{\omega}$ and $\partial \omega$ is of type $(2,1)$ whereas $\ov{\partial} \omega$ is of type $(1,2)$.  Hence $d\omega=0$ if and only if both $\partial \omega$ and $\ov{\partial}\omega$ vanish.

\noindent
{\bf \ref{ex18}.} E.g., for $\nabla_kX^i$, compute
\[
\begin{split}
\du{k} X^i + \Gamma^i_{km} X^m = {} & \dtu{p}{k} \dt{p} \left( \tilde{X}^{\ell} \dut{i}{\ell} \right) + g^{i\ov{q}} \left( \du{k} g_{m\ov{q}} \right) \tilde{X}^{\ell} \dut{m}{\ell} \\
= {} & \dtu{p}{k} \dut{i}{\ell} \frac{\partial \tilde{X}^{\ell}}{\partial \tilde{z}^p} + \dtu{p}{k} \tilde{X}^{\ell} \frac{\partial^2 z^i}{\partial \tilde{z}^p \partial \tilde{z}^{\ell}} \\
& + \tilde{g}^{a\ov{b}} \dut{i}{a} \ov{\dut{q}{b}} \dtu{e}{k} \dt{e} \left( \tilde{g}_{c\ov{d}} \dtu{c}{m} \ov{\dtu{d}{q}} \right) \tilde{X}^{\ell} \dut{m}{\ell}
\end{split}
\]
and use the fact that 
$$\dt{e} \left( \dtu{c}{m}\right) = -\dtu{c}{r} \dtu{s}{m} \frac{\partial^2 z^r}{\partial \tilde{z}^e \partial \tilde{z}^s}.$$

\noindent
{\bf \ref{ex119}.} Straightforward calculation.

\noindent
{\bf \ref{ex110}.} Compute at $0$,
$$\du{k} g_{i\ov{j}} = \du{k} \left( \tilde{g}_{p\ov{q}} \dtu{p}{i} \ov{\dtu{q}{j}} \right) = \dtu{m}{k} \left( \dt{m} \tilde{g}_{p\ov{q}} \right) \dtu{p}{i} \ov{\dtu{q}{j}} + \tilde{g}_{p\ov{q}} \frac{\partial^2 \tilde{z}^p}{\partial z^k \partial z^i} \ov{\dtu{q}{j}}.$$
But at $0$ we have $\displaystyle{\tilde{g}_{p\ov{q}}= \delta_{pq} = \dtu{p}{q}}$ and hence
$$\du{k} g_{i\ov{j}} = \tilde{\Gamma}^j_{ki}(0) - \tilde{\Gamma}^j_{ki}(0)=0.$$

\noindent
{\bf \ref{ex111}.} For example, in holomorphic normal coordinates,
$$[\nabla_i, \nabla_{\ov{j}}] b_{\ov{q}} = \partial_i (\partial_{\ov{j}} b_{\ov{q}} - \ov{\Gamma^{\ell}_{jq}} b_{\ov{\ell}}) - \partial_{\ov{j}} \partial_i b_{\ov{q}} = - (\partial_i \ov{\Gamma^{\ell}_{jq}})b_{\ov{\ell}} = R_{i\ov{j} \ \, \ov{q}}^{\ \ \, \ov{\ell}} b_{\ov{\ell}}.$$

\noindent
{\bf \ref{extr}.} (1) \ Pick coordinates at a point for which $g_{i\ov{j}}=\delta_{ij}$ and $(\beta_{i\ov{j}})$ is a diagonal matrix with eigenvalues $\lambda_1, \ldots, \lambda_n$.  Then $n \omega^{n-1} \wedge \beta$ and $g^{i\ov{j}}\beta_{i\ov{j}} \omega^n$ both equal 
$$\left( \sum_{i=1}^n \lambda_i  \right) n!  (\sqrt{-1})^n dz^1 \wedge d\ov{z}^1 \wedge \cdots \wedge dz^n \wedge d\ov{z}^n.$$
(2) follows from (1).

\noindent
{\bf \ref{exOmega}.}  On $U \cap \tilde{U}$, we have
$$\tilde{a} = \left| \det \left( \dtu{i}{j} \right) \right|^2 a.$$  The exercise follows from the fact that if $f$ is a nowhere vanishing holomorphic function then  $\displaystyle{\ddbar \log |f|^2=0.}$

\noindent
{\bf \ref{exfs2}.} With the notation of Exercise \ref{exfs},
$$\det \left( \frac{\delta_{ij} + |\underline{z}|^2 \delta_{ij} - z^j \ov{z}^i}{(1+ |\underline{z}|^2)^2} \right) = \frac{1}{(1+|\underline{z}|^2)^{n+1}},$$
which can be more easily calculated  by applying a unitary transformation to $\mathbb{C}^n$ so that $z^2=\cdots = z^n=0$.
Then $\Ric(\omega_{\textrm{FS}}) = (n+1)\ddbar \log (1+ |\underline{z}|^2) = (n+1) \omega_{\textrm{FS}}$.

\bigskip
\noindent
{\bf \ref{exerddbar}.} Adding a constant to $f$ we may assume that $f$ is positive. Applying Stokes' Theorem and Exercise \ref{extr},
$$0 \ge - \int_M f \ddbar f \wedge \omega^{n-1} = \frac{1}{n} \int_M | \partial f|^2 \omega^n,$$ so $\partial f=0$.

\noindent
{\bf \ref{ex22}.} For example, suppose that $\alpha>0$ and $\alpha<0$.  Then $\alpha$ contains a K\"ahler metric $\omega$ and $-\alpha$ contains a K\"ahler metric $\omega'$.  Then $[\omega+ \omega']=0$ so $\omega + \omega' = \ddbar f >0$ for some real-valued function $f$.  This contradicts Exercise \ref{exerddbar}.

\noindent
{\bf \ref{ex23}.} It is immediate from the definition that $\textrm{Ka}(M)$ is a convex cone.  For openness, let $\gamma_1, \ldots, \gamma_m$ be smooth closed $(1,1)$ forms with the property that $[\gamma_1], \ldots, [\gamma_m]$ is a basis for $H^{1,1}_{\ov{\partial}}(M, \mathbb{R})$.  If $\alpha$  in  $\textrm{Ka}(M)$ is represented by a K\"ahler metric $\omega$, then for $\ve_i>0$ sufficiently small $\omega + \sum_i \ve_i \gamma_i$ is K\"ahler.  Hence $[\alpha] + \sum_i \ve_i [\gamma_i]$ is in $\textrm{Ka}(M)$ for $\ve_i$ sufficiently small.

\noindent
{\bf \ref{prod}.} For (a) pick product coordinates.  (b) follows from (a).

\noindent
{\bf \ref{exerP1P1}.} Follows from Exercise \ref{prod}.

\noindent
{\bf \ref{ex26}.}  Let $\pi_E$ and $\pi_S$ be the projections onto $E$ and $S$. 
\begin{enumerate}
\item[(a)]  $c_1(M) = -[\pi^*_S\omega_S]$, and $\pi^*_S \omega_S \ge 0$, so $T=\infty$. 
\item[(b)]  $\omega(t) = \pi^*_E \omega_E + (1+t) \pi^*_S \omega_S$.  
\item[(c)]  The  torus fibers collapse and $\omega(t)/t$ converges to the K\"ahler-Einstein metric $\omega_S$ on $S$.
\end{enumerate}

\noindent
{\bf \ref{ex27}.} Observation:  $\alpha$ is nef if and only if for all $\ve>0$, we have $\alpha+\ve[\omega_0]>0$.
\begin{enumerate}  
\item[(a)] If $\alpha$ is nef then by the observation it is immediate that $\alpha$ is in the closure of $\textrm{Ka}(M)$.  Conversely, let $\alpha$ be in the closure of $\textrm{Ka}(M)$ so that there exist $\alpha_j \in \textrm{Ka}(M)$ with $\alpha_j \rightarrow \alpha$.  Let $\beta_1, \ldots, \beta_m$ be smooth closed $(1,1)$ forms so that the $[\beta_i]$ give a basis for $H^{1,1}_{\ov{\partial}}(M, \mathbb{R})$.  Then $\alpha - \alpha_j = \sum_i b_{i,j} [\beta_i]$ with $b_{i,j} \rightarrow 0$ as $j \rightarrow \infty$.  Now let $\ve>0$. For $j$ large enough, we have $ \sum_i b_{i,j} \beta_i\ge - \ve \omega_0$.  Let $\omega_j$ in $\alpha_j$ be K\"ahler.  Then $\omega_j + \sum b_{i,j} \beta_j \ge -\ve \omega_0$ and $\omega_j + \sum b_{i,j} \beta_j \in \alpha.$
\item[(b)] We just have to show that
$$\sup \{ t >0 \ | \ [\omega_0] - tc_1(M) \textrm{ is nef} \} \le T.$$
Suppose not.  Then $[\omega_0] - (T+\delta)c_1(M)$ is nef for some $\delta>0$ and so $(1+\ve)[\omega_0] - (T+\delta) c_1(M) >0$ for all $\ve>0$.  Hence
$$[\omega_0] - \frac{T+\delta}{1+\ve} c_1(M)>0,$$
a contradiction since we may choose $\ve>0$ so that $\displaystyle{\frac{T+\delta}{1+\ve} > T}$.

\end{enumerate}

\noindent
{\bf \ref{ex31}.}  (cf. \cite{TW2}).  Suppose $\omega=\omega(t)$ solves $\displaystyle{\ddt{} \omega = - \Ric{(\omega)}}$.  Then if we let $\varphi$ solve $\ddt \varphi = \log (\omega^n/\Omega)$ with $\varphi|_{t=0}=0$ we have 
$$\ddt{} (\omega - \hat{\omega}_t -\ddbar \varphi)=0,$$
which implies that $\omega = \hat{\omega}_t + \ddbar \varphi$, with $\varphi$ solving (\ref{ma}).

\noindent
{\bf \ref{ex45}.} For (a), 
suppose $f_t$ does not converge smoothly to $f$.  Then for some $k$ there exists $\ve>0$ and $t_i \rightarrow T_0$ such that $\| f_{t_i} - f \|_{C^k(M)} \ge \ve$ for all $i$.  Then applying Arzel\`a-Ascoli, after passing to a subsequence, $f_{t_i}$ converges smoothly to some function $\tilde{f}$ with $f \neq \tilde{f}$ and this contradicts the fact that $f_{t_i}$ converges to $f$ pointwise.
For (b), suppose $|\dot{f}_t|\le A$.  Then $f_t + tA$ is nondecreasing and bounded above so converges pointwise to a unique limit.  Now apply (a).

\noindent
{\bf \ref{ex32}.} Pick $\psi = \varphi - Bt$ for 
$$B = \inf_{M \times[0,S]} \log \frac{\hat{\omega}_t^n}{\Omega} -1.$$

\noindent
{\bf \ref{ex33}.} Put $Q= \dot{\varphi} - A \varphi$ for $A$ chosen so that $A\hat{\omega}_t \ge \chi$.  Then compute
$$\left( \ddt{} - \Delta \right) Q = \tr{\omega}{\chi} - A \dot{\varphi} + An - A \hat{\omega}_t \le -A\dot{\varphi} +An,$$
so that $\dot{\varphi}\le n$  at a maximum of $Q$ (if achieved at $t_0>0$).

\noindent
{\bf \ref{exeq}.} Pick coordinates at a point so that $g_0$ is the identity and $g$ is diagonal with eigenvalues $\lambda_1, \ldots, \lambda_n$.  Then, for example, if $\tr{\omega_0}{\omega} \le C_1$, we have $\sum_i \lambda_i \le C_1$ and so $\lambda_i \le C_1$.  On the other hand, we have $\lambda_1\lambda_2\cdots \lambda_n \ge C^{-1}$, so 
$$\frac{1}{\lambda_i} = \frac{\lambda_1 \cdots \widehat{\lambda_i} \cdots \lambda_n}{\lambda_1 \cdots \lambda_n} \le \frac{C_1^{n-1}}{C^{-1}}$$
where $\widehat{ \ }$ means ``omit''.
Hence $\tr{\omega}{\omega_0} = \sum_i \lambda_i^{-1} \le C_2 := nC_1^{n-1}C$.

\noindent
{\bf \ref{ex35}.}  Pick holomorphic coordinates at a point  with respect to $g_0$, so that $\nabla^0_k = \partial_k$.   Then
\[
\begin{split}
g_0^{i\ov{q}} g^{p\ov{j}} g^{k\ov{\ell}} B_{i\ov{j}k} \ov{B_{q\ov{p}\ell}} = {} & g_0^{i\ov{q}} g^{p\ov{j}} g^{k\ov{\ell}}  \left( \partial_i g_{k\ov{j}} - \frac{\partial_k (\tr{\omega_0}{\omega})}{\tr{\omega_0}{\omega}} g_{i\ov{j}} \right) \left( \partial_{\ov{q}} g_{p\ov{\ell}} - \frac{\partial_{\ov{\ell}} (\tr{\omega_0}{\omega})}{\tr{\omega_0}{\omega}} g_{p\ov{q}} \right) \\
= {} & (I) + (II) + (III),
\end{split}
\]
where, using the K\"ahler condition, we have
$$(I) = g_0^{i\ov{q}} g^{p\ov{j}} g^{k\ov{\ell}} \partial_i g_{k\ov{j}} \partial_{\ov{q}} g_{p\ov{\ell}} = g_0^{i\ov{q}} g^{p\ov{j}} g^{k\ov{\ell}} \nabla_k^0 g_{i\ov{j}} \nabla^0_{\ov{\ell}} g_{p\ov{q}},$$ 
and
\[
\begin{split}
(II) ={} & - 2\textrm{Re} \left( g_0^{i\ov{q}} g^{p\ov{j}} g^{k\ov{\ell}} \partial_i g_{k\ov{j}} \frac{\partial_{\ov{\ell}} (\tr{\omega_0}{\omega})}{\tr{\omega_0}{\omega}} g_{p\ov{q}} \right) \\
= {} & - 2 \textrm{Re} \left( g^{k\ov{\ell}} g_0^{i\ov{j}} \partial_k g_{i\ov{j}} \frac{\partial_{\ov{\ell}} (\tr{\omega_0}{\omega})}{\tr{\omega_0}{\omega}} \right) \\
= {} & - 2 \frac{ | \partial \tr{\omega_0}{\omega}|^2_g}{\tr{\omega_0}{\omega}},
\end{split}
\]
and
$$(III) = g_0^{i\ov{q}} g^{p\ov{j}} g^{k\ov{\ell}} \frac{\partial_k (\tr{\omega_0}{\omega})}{\tr{\omega_0}{\omega}} g_{i\ov{j}} \frac{\partial_{\ov{\ell}} (\tr{\omega_0}{\omega})}{\tr{\omega_0}{\omega}} g_{p\ov{q}} =  \frac{ | \partial \tr{\omega_0}{\omega}|^2_g}{\tr{\omega_0}{\omega}}.$$

\noindent
{\bf \ref{ex41}.} Both $-\Ric(\hat{\omega}_{\infty})$ and $\hat{\omega}_{\infty}$ lie  in $- c_1(M)$ so there exists $f$ with
$$\ddbar \log \hat{\omega}_{\infty}^n = \hat{\omega}_{\infty} + \ddbar f.$$
Set $\Omega= \hat{\omega}_{\infty}^n e^{-f+c}$ with $c$ chosen so that $\int_M \Omega = \int_M \omega_0^n$. 

\noindent
{\bf \ref{ex42}.} Similar to the proof in Lecture 3 (see Exercise \ref{ex31}) that (\ref{krf}) is equivalent to (\ref{ma}).

\noindent
{\bf \ref{ex43}.} At the maximum of $\varphi$ (if it occurs at $t_0>0$) we have $\ddbar \varphi \le 0$ and $\ddt{} \varphi \ge 0$ and hence from (\ref{nma}), $\varphi \le \log \frac{\hat{\omega}_t^n}{\Omega} \le C$.  The lower bound of $\varphi$ is similar.

\noindent
{\bf \ref{ex44}.} The only difference compared to the calculation of Lemma \ref{lemtr} is that in (\ref{calc1}) there is an extra term coming from the $-\omega$ in $\ddt{} \omega = - \Ric(\omega)-\omega$, which yields an additional $\displaystyle{\frac{-\tr{\omega_0}{\omega}}{\tr{\omega_0}{\omega}}}=-1.$

\noindent
{\bf \ref{ex46}.} For $\ve>0$, $(\ddt{} - \Delta) (f-\ve t) = -\ve<0$ and hence the maximum of $f-\ve t$ must occur at $t=0$ giving $f -\ve t \le \sup_M |f_0|$.  Let $\ve \rightarrow 0$.  The lower bound is similar.

\noindent
{\bf \ref{ex47}.} Consider $Q= \log \tr{\omega_0}{\omega}-A\varphi$ on $M \times [0,t]$ and show that, for $A$ sufficiently large,
$$\left( \ddt{} - \Delta \right) Q \le - \tr{\omega}{\omega_0} + C,$$
using the fact that $\dot{\varphi}$ is uniformly bounded.  If $Q$ achieves a maximum at $(x_0, t_0)$ with $t_0>0$ then since $\varphi$ is bounded we have $(\tr{\omega}{\omega_0})(x_0, t_0) \le C$ and so $(\tr{\omega_0}{\omega})(x_0, t_0)\le C'$.  Hence for any $(x,t)$, 
$$(\log \tr{\omega_0}{\omega})(x,t) - A\varphi(x,t) \le Q(x_0, t_0) \le \log C' - A \varphi(x_0,t_0)$$
and the claim follows after exponentiating.

\noindent
{\bf \ref{ex48}.} Since $\ddt{} \omega^n = \tr{\omega}{(\ddt{} \omega)} \, \omega^n= \Delta \dot{\varphi} \, \omega^n$ we have
$$\frac{d}{dt} P(t) = \int_M \Delta \dot{\varphi} \, \omega^n + \int_M \dot{\varphi} \Delta \dot{\varphi} \, \omega^n =  - \frac{1}{n} \int_M |\partial \dot{\varphi} |_g^2 \omega^n,$$
using Stokes' Theorem and Exercise \ref{extr}.

\noindent
{\bf \ref{ex49}.} Compute
$$\left( \ddt{} - \Delta \right) | \partial \dot{\varphi} |_g^2 = - | \nabla \nabla \dot{\varphi} |_g^2 - | \nabla \ov{\nabla} \dot{{\varphi}} |_g^2\le 0,$$
where $| \nabla \nabla \dot{\varphi} |_g^2 = g^{i\ov{j}} g^{k\ov{\ell}} \nabla_i \nabla_k \dot{\varphi} \nabla_{\ov{j}} \nabla_{\ov{\ell}} \dot{\varphi}$ etc.
Then 
$$\frac{d^2 P}{dt^2} \ge - \frac{1}{n} \int_M \Delta | \partial \dot{\varphi}|_g^2 \omega^n - \frac{1}{n} \int_M | \partial \dot{\varphi} |_g^2 \Delta \dot{\varphi} \, \omega^n \ge  - \frac{C}{n} \int_M | \partial \dot{\varphi}|^2_g \omega^n =  C\frac{dP}{dt},$$
since $\Delta \dot{\varphi}$ is uniformly bounded.    To show that $dP/dt \rightarrow 0$, we use the following elementary fact.  If $f : [0,\infty) \rightarrow \mathbb{R}$ satisfies the differential inequality $\dot{f} \ge Cf$ then if $f \ge -\ve$ at $t$ we have $f \ge -e^{2C} \ve$ on $[t,t+2]$ (consider $fe^{-Ct}$).  Then since $(dP/dt) (t_i) \rightarrow 0$ for $t_i \in [i,i+1]$ it follows that $dP/dt \rightarrow 0$.

\end{document}